\begin{document}

\font\bfit=cmbxti10

\newtheorem{theorem}{Theorem}[section]
\newtheorem{lemma}[theorem]{Lemma}
\newtheorem{sublemma}[theorem]{Sublemma}
\newtheorem{proposition}[theorem]{Proposition}
\newtheorem{corollary}[theorem]{Corollary}
\newtheorem{conjecture}[theorem]{Conjecture}
\newtheorem{question}[theorem]{Question}
\newtheorem{problem}[theorem]{Problem}
\newtheorem*{claim}{Claim}
\newtheorem*{criterion}{Criterion}
\newtheorem*{slww_inequality}{(Cor.~\ref{sl_inequality_lemma}) $\sl(w|w)$ inequality}
\newtheorem*{gamman_inequality}{(Prop.~\ref{gamma_n_inequality}) $\gamma_n$ inequality}
\newtheorem*{nilabelian}{(Thm.~\ref{nilpotent_implies_abelian})}
\newtheorem*{solabelian}{(Thm.~\ref{vanishing_scl_beta_2_abelian})}
\newtheorem*{gamman_duality}{(Thm.~\ref{duality_theorem}) $\gamma_n$-Duality theorem}
\newtheorem*{gamman_comparison}{(Thm.~\ref{comparison_theorem}) $\gamma_n$-Comparison theorem}
\newtheorem*{beta2_duality}{(Thm.~\ref{beta_2_duality_theorem}) $\beta_2$-Duality theorem}

\theoremstyle{definition}
\newtheorem{definition}[theorem]{Definition}
\newtheorem{construction}[theorem]{Construction}
\newtheorem{notation}[theorem]{Notation}

\theoremstyle{remark}
\newtheorem{remark}[theorem]{Remark}
\newtheorem{example}[theorem]{Example}

\numberwithin{equation}{subsection}

\def\Z{\mathbb Z}
\def\R{\mathbb R}
\def\N{\mathbb N}
\def\l{\textnormal{l}}
\def\sl{\textnormal{sl}}
\def\SL{\textnormal{SL}}

\def\CAT{\textnormal{CAT}}
\def\PL{\textnormal{PL}}
\def\cl{\textnormal{cl}}
\def\scl{\textnormal{scl}}
\def\area{\textnormal{area}}
\def\Aut{\textnormal{Aut}}
\def\seq(#1){{\lbrace #1 \rbrace}}
\def\fl(#1){{\lfloor #1 \rfloor}}

\def\id{\textnormal{id}}
\def\length{\textnormal{length}}

\title{Stable W-length}
\author{Danny Calegari}
\address{Department of Mathematics \\ Caltech \\
Pasadena CA, 91125}
\email{dannyc@its.caltech.edu}
\author{Dongping Zhuang}
\address{Department of Mathematics \\ Vanderbilt University \\ 
Nashville TN, 37240}
\email{dongping.zhuang@vanderbilt.edu}

\date{\today \quad version 0.13}

\begin{abstract}
We study stable $W$-length in groups, especially for $W$ equal to the $n$-fold commutator
$\gamma_n:=[x_1,[x_2,\cdots[x_{n-1},x_n]]\cdots]$. We prove that in any perfect group, for any $n\ge 2$ and any element $g$,
the stable commutator length of $g$ is at least as big as $2^{2-n}$ times 
the stable $\gamma_n$-length of $g$. We
also establish analogues of Bavard duality for words $\gamma_n$ and for $\beta_2:=[[x,y],[z,w]]$.
Our proofs make use of geometric properties of the asymptotic cones of verbal subgroups with respect
to bi-invariant metrics. In particular, we show that for suitable $W$,
these asymptotic cones contain certain subgroups
that are {\em normed vector spaces}.
\end{abstract}

\maketitle

\section{Introduction}

Geometric group theory aims to produce functors from the algebraic category of
groups and homomorphisms to geometric categories of spaces and structure-preserving maps.
The category of metric spaces and isometries does not have enough morphisms for many
applications, so one instead
typically studies functors from the category of groups and homomorphisms, to the category
of metric spaces and $1$-Lipschitz (i.e.\ distance decreasing) maps.

A rich source of such functors arise in the theory of {\em bounded cohomology}, introduced
systematically by Gromov in \cite{Gromov_volume}. In that theory, the metric spaces are
usually {\em normed vector spaces}, and the morphisms {\em bounded linear operators}.
Another rich source of such examples comes from the study of (conjugation-invariant)
{\em norms}; see e.g.\ \cite{Burago_Ivanov_Polterovich} for a discussion, and \cite{Kotschick} 
for an application to the theory of mapping class groups.

Such functors are often useful for the study of groups as dynamical objects, where the
functor ``geometrizes'' the group action, and allows one to obtain {\it a priori} control
of dynamical quantities from algebra. For example, {\em stable commutator length} (hereafter
$\scl$; see \cite{Calegari_scl} or \cite{Bavard} for an introduction)
has rich connections to $2$-dimensional dynamics, symplectic geometry, hyperbolic geometry,
and so on.

\medskip

A natural class of characteristic norms (those invariant under any automorphism), 
with good monotonicity properties, arise from the
theory of {\em words}. Given a subset $W$ of a free group $F$, a {\em $W$-word} in $G$ is the
image of some $w \in W$ under some homomorphism $\phi:F \to G$. The $W$-words in $G$ generate
a so-called {\em verbal subgroup} $G_W$ (see e.g.\ \cite{Neumann}), 
and the {\em $W$-length} of any $g\in G_W$ is defined
to be the smallest number of $W$-words and their inverses in $G$ whose product is equal to $g$.
For example, many authors study {\em square length}, which is $W$-length in the case $W$ consists
of the single word $x^2$. $W$-length (usually for $W$ consisting of a single word $w$) has
been intensively studied in finite groups, and recently some very strong theorems have been
obtained by Shalev and his collaborators (e.g.\ \cite{Shalev_waring, Shalev_waring2, LOST}).
See \cite{Segal} for a survey, and an introduction to some of these methods. However, with
some exceptions (notably \cite{Rhemtulla}), $W$-length has not been widely studied in (general)
infinite groups except in the special case of $W=\lbrace [x,y]\rbrace$ --- i.e.\ commutator
length. Part of the problem is that $W$-length seems to be such an unstructured quantity in
general, and is exceedingly hard to compute, or even to estimate --- even in finite groups!
Therefore in this paper we propose to study a suitable ``rationalization'' of this quantity,
namely {\em stable $W$-length}, where the stable $W$-length of an element $g\in G_W$ is defined
to be the limit of the quantity ($W$-length of $g^n$ divided by $n$) as $n \to \infty$. Our
aim is to generalize (to the extent that it is possible) some of what is known about stable
commutator length to more general classes of norms.

\medskip

Perhaps the most significant difficulty in generalizing the theory of (stable) commutator
length to more general stable lengths is that of {\em linearizing} the theory. The commutator
calculus exhibits an intimate connection between the algebraic theory of commutators and the
linear theory of ($2$-dimensional) homology. Commutators arise as boundaries in group homology,
and one may obtain a duality (known as {\em Bavard Duality}; see \cite{Bavard} and \cite{Calegari_scl},
Chapter 2) between stable commutator length and certain natural group
$1$-cocycles called {\em homogeneous quasimorphisms}. There is no natural homology theory
available for stable $W$-length for more general $W$, but one may obtain a generalization of the
theory for commutators by a {\em geometric} construction. Given a group $G$, one considers the
Cayley graph of $[G,G]$ taking as generators {\em all commutators in $G$}. This is a metric space
(a graph), and the group $[G,G]$ acts on itself by metric isometries. Because every commutator
has ``length $1$'' in this graph, the action of the group $[G,G]$ on itself is ``almost''
commutative. When one replaces the Cayley graph by its asymptotic cone (an infinite re-scaled
version), the limit becomes {\em exactly} commutative, and one obtains a normed vector space,
which can be identified with one of the natural spaces obtained from the homological approach.
It is this geometric construction that generalizes: given any $W$, one considers the Cayley
graph of $G_W$ with all $W$-words and their inverses as generators. Since $W$-words act with bounded
length in the Cayley graph, the rescaled asymptotic cone obeys the ``law'' that all $W$-words 
are {\em trivial}. With more work, one can obtain a certain subset of the asymptotic cone where,
for suitable $W$, the resulting group is actually {\em abelian}, and is in fact a normed vector
space where one can establish the analogue of Bavard duality. This program
is most successful when $W$ is an $n$-fold commutator $\gamma_n$, in which case we are able to
establish (for perfect groups $G$) a precise analog of Bavard duality, and to prove the existence
of {\em two-sided} estimates of stable $\gamma_n$-length in terms of stable commutator length.
This comparison theorem is a genuinely stable phenomenon, and does not hold for ``naive'' 
$\gamma_n$-length (for a precise statement of results, see \S~\ref{statement_of_results}).

\subsection{Statement of results}\label{statement_of_results}

We now give a precise statement of the main results in the paper.

In \S~\ref{basic_properties_section} we introduce terminology, and establish basic
properties. We show that there are various inequalities relating $W$-length and stable
$W$-length in various groups and for various different $W$. 
We use the notation (here and
elsewhere) of $\l(*|W)$ for $W$-length, and $\sl(*|W)$ for stable $W$-length, with $\cl(*)$ and
$\scl(*)$ for the special case of commutator length and stable commutator length respectively.

We give simple examples of groups
where stable $w$-length is nontrivial for essentially all words (free groups, hyperbolic groups)
and where stable $w$-length is trivial for all $w$ ($\SL(2,A)$ for certain rings of algebraic integers $A$).
We also derive some nontrivial estimates (usually upper bounds) on $\sl$ in free groups.
For example, 

\begin{slww_inequality}
For any $w$ there is an inequality
$$1/2 \le \frac {\scl(w)} {\scl(w) + 1/2} \le \sl(w|w) \le 1$$
\end{slww_inequality}

We strengthen the upper bound by an explicit construction, for certain natural classes of words.
Let $\gamma_n$ denote the {\em $n$-fold commutator}, so $\gamma_1=x$, $\gamma_2=[x,y]$,
$\gamma_3=[x,[y,z]]$ and so on. We obtain the estimate

\begin{gamman_inequality}
For any $n$ there is an inequality
$$\sl(\gamma_n|\gamma_n) \le 1 - 2^{1-n}$$
\end{gamman_inequality}

\S~\ref{cone_section} is the heart of the paper. To any group $G$ and any $W$, we study the asymptotic
geometry of the Cayley graph $C_W$ of $G_W$ with all $W$-words as generators. 
A limit of rescalings of $C_W$ gives rise to the {\em asymptotic cone} $\widehat{A}_W$.
The cone $\widehat{A}_W$
is itself a group, with a bi-invariant metric, obeying a nontrivial law. It contains a canonical
contractible subgroup $A_W$ which metrically encodes the values of stable $W$-length on $G_W$.
Our results are most definitive for certain specific $W$; in particular,

\begin{nilabelian}
Suppose $A_W$ is nilpotent (for example, if $W=\gamma_n$ for some $n$). 
Then it is a normed vector space (in particular it is abelian).
\end{nilabelian}

Moreover, if $\beta_2$ denotes the word $[[x,y],[z,w]]$,

\begin{solabelian}
Suppose in some group $G$ that stable commutator length vanishes identically. 
Then $A_W$ is abelian (and hence a normed vector space) for $W=\beta_2$.
\end{solabelian}

The fact that these asymptotic cones are normed vector spaces lets one study their geometry
(and thereby stable $W$-length) dually, via $1$-Lipschitz homomorphisms $A_W \to \R$. Motivated
by Bavard duality, we say a function $\phi:G \to \R$ is a {\em weak} $\gamma_n$-hoq (``homogeneous
quasimorphism'') if it is homogeneous (i.e.\ $\phi(g^n)=n\phi(g)$) and if there is a least non-negative
real number $D(\phi)$ (called the {\em defect}) such that for any $g,h \in G$ there is an inequality 
$$|\phi(g) + \phi(h) - \phi(gh)| \le D(\phi)\min(\l(g|\gamma_{n-1}),\l(h|\gamma_{n-1}),\l(gh|\gamma_{n-1}))$$

\begin{gamman_duality}
For any perfect group $G$, and any $g \in G$ there is an inequality
$$\sup_\phi \phi(g)/D(\phi) \ge \sl(g|\gamma_n) \ge \sup_\phi \phi(g)/2D(\phi)$$
where the supremum is taken over all weak $\gamma_n$-hoqs.
\end{gamman_duality}

From this we derive the following rather surprising estimate, comparing stable commutator length
with stable $\gamma_n$ length in any perfect group:

\begin{gamman_comparison}
For any perfect group $G$, for any $n\ge 2$ and for any $g\in G$ there is an inequality
$$2^{n-2}\scl(g) \ge \sl(g|\gamma_n) \ge \scl(g)$$
\end{gamman_comparison}

Together the $\gamma_n$-Comparison theorem and the $\gamma_n$-Duality theorem show that stable
commutator length in a perfect group $G$ can be bounded from below by weak $\gamma_n$-hoqs. It would
be interesting to try to find naturally occurring examples of such functions arising from realizations
of $G$ as a group of automorphisms of some geometric object.

\medskip

In a similar vein, we say $\phi:G\to\R$ is a {\em weak} $\beta_2$-hoq if it is homogeneous, and
if there is a least non-negative
real number $D(\phi)$ (called the {\em defect}) such that for any $g,h \in G$ there is an inequality 
$$|\phi(g) + \phi(h) - \phi(gh)| \le D(\phi)\min(\l(g|\beta_2),\l(h|\beta_2),\l(gh|\beta_2))$$

\begin{beta2_duality}
For any perfect group $G$ in which $\scl$ vanishes identically, and for
any $g \in G$ there is an inequality
$$\sup_\phi 2\phi(g)/D(\phi) \ge \sl(g|\beta_2) \ge \sup_\phi \phi(g)/2D(\phi)$$
where the supremum is taken over all weak $\beta_2$-hoqs.
\end{beta2_duality}
\noindent Note that stable commutator length is known to vanish identically in many
important classes of groups, including
\begin{itemize}
\item{amenable groups}
\item{lattices in higher rank Lie groups \cite{Burger_Monod}}
\item{groups of $\PL$ homeomorphisms of the interval \cite{Calegari_PL}}
\item{groups that satisfy a law \cite{Calegari_laws}}
\end{itemize}
It is not known whether stable $W$-length for various $W$ vanish
in these classes of groups. One of the aims of this paper is to develop
tools to approach this question.

\medskip

Finally, in \S~\ref{grope_section}, we give an elementary construction, using hyperbolic
geometry, of a class of elements $w_n$ in free groups of various ranks for which 
$\scl(w_n)=1/2$ and $\sl(w_n|\gamma_3)=1$,
but for which $\cl(w_n)=1$ and $\l(w_n|\gamma_3)\ge 2n/3$.

\subsection{Acknowledgments}

We would like to thank Frank Calegari, Benson Farb, Denis Osin and Dan Segal
for helpful comments. We would also like to thank the anonymous referee for a very careful reading
and for catching several errors. Danny Calegari was supported by NSF grant DMS 1005246.

\section{Basic properties}\label{basic_properties_section}

The purpose of this section is to standardize definitions and notation, and to survey some
elementary constructions in the theory.

\begin{notation}
We use the notation $[x,y]$ for $xyx^{-1}y^{-1}$ and $x^y$ for $yxy^{-1}$. With this convention, 
exponentiation obeys $(x^y)^z = x^{zy}$. We also use the notation $x^*$ for some (unspecified)
conjugate of $x$.
\end{notation}

The following identities, though elementary, are useful.

\begin{lemma}\label{elementary_identities}
The following identities hold (the letters denote free generators):
\begin{enumerate}
\item{$[x,y] = [y^x,x^{-1}]$}
\item{$[x,y]^{-1} = [y,x] = [x^y,y^{-1}]$}
\item{$[x,yz]=[x,y][x,z]^y=[x,y][x,z][[z,x],y]$}
\item{$[xy,z]=[x,z][y,z]^{x^z}=[x,z][y,z][[z,y],x^z]$}
\item{$[[y,x],z^x][[x,z],y^z][[z,y],x^y]=1$}
\end{enumerate}
\end{lemma}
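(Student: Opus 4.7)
The plan is entirely computational. Each of the five identities is an equation between words in the free group on the displayed letters, so once we expand both sides using the definitions $[a,b]=aba^{-1}b^{-1}$ and $a^b=bab^{-1}$ (together with $(a^b)^c = a^{cb}$), verification reduces to free reduction.

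For identities (1) and (2) I would just substitute: for (1), writing out $[y^x,x^{-1}] = (xyx^{-1})x^{-1}(xy^{-1}x^{-1})x$ and freely reducing gives $xyx^{-1}y^{-1}=[x,y]$; for (2), inverting $xyx^{-1}y^{-1}$ gives $yxy^{-1}x^{-1}=[y,x]$, and the second equality follows from applying (1) with the roles of $x,y$ reversed. For (3), I would first prove the left equality $[x,yz]=[x,y]\cdot y[x,z]y^{-1}$ by expanding both sides to $xyzx^{-1}z^{-1}y^{-1}$. The right equality then reduces to the assertion that $[x,z]^y = [x,z]\cdot[[z,x],y]$, which is just the definition of conjugation: $a^y = a\cdot a^{-1}ya y^{-1} = a\cdot[a^{-1},y]$ applied with $a=[x,z]$, together with $[x,z]^{-1}=[z,x]$ from (2). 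Identity (4) is handled by exactly the same pattern, with the conjugating element now $x^z$ on the right instead of $y$ on the left.

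The main obstacle is (5), the Hall--Witt identity, since unlike (1)--(4) it involves no linear structure to exploit; every factor is a double commutator, and both sides of the equation have to be reduced simultaneously. I would not attempt brute expansion of all three terms at once. Instead I would use the rewriting $[[a,b^{-1}],c]^b = aba^{-1}cab^{-1}a^{-1}c^{-1}$-type reductions (verified once and for all from the definitions), conjugate each of the three terms so that a common prefix is exposed, and then check that after the resulting cyclic cancellation the product telescopes to $1$. Equivalently, one can set $u=xyx^{-1}$, $v=yzy^{-1}$, $w=zxz^{-1}$ and observe that each of the three double-commutators becomes a conjugate of $u v^{-1} w \cdots$; the identity then falls out by cyclic symmetry.

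Throughout, no ingredient beyond free reduction is used, so I would present the proof simply as a list of direct calculations and defer (5) to a single careful expansion, remarking that it is the classical Hall--Witt identity and may be found in any reference on commutator calculus.
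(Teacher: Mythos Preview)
Your approach is correct. In fact the paper gives no proof at all for this lemma: it simply states the five identities and remarks that bullet (5) is the classical Hall--Witt identity. Your plan of expanding each side using the definitions $[a,b]=aba^{-1}b^{-1}$ and $a^b=bab^{-1}$ and freely reducing is exactly how one verifies such identities, and your sketched computations for (1)--(4) are accurate (in particular the observation $a^b=a\cdot[a^{-1},b]$ cleanly dispatches the second equalities in (3) and (4)). For (5) your suggestion to either expand carefully exploiting the cyclic symmetry or to defer to a standard reference is entirely in keeping with the paper's own treatment.
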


Bullet (5) is known as the {\em Hall-Witt identity}.

\subsection{Verbal subgroups}

\begin{definition}[\protect{$W$-word}]
Let $F$ be a free group, and $W$ a subset of $F$. A {\em $W$-word} in a group $G$ is the image of some
$w \in W$ under a homomorphism $F \to G$.
\end{definition}

\begin{definition}[verbal subgroup]
Let $G$ be a group. A {\em verbal subgroup} of $G$ is the subgroup generated by the $W$-words, for
some $W \subset F$. The $W$-verbal subgroup of $G$ is denoted $G_W$.

The group $G$ is said to {\em obey the law} $W$ if $G_W$ is trivial. A group is said to 
{\em obey a law} if $G_W$ is trivial for some nonempty $W$.
\end{definition}

For any sets $V,W \subset F$ with $F_V=F_W$ (for example, if $W \subset V \subset F_W$)
and for any group $G$, the verbal subgroups $G_V$ and $G_W$ are equal, though the 
$V$-words and the $W$-words in $G$ might not be equal {\em as sets}.

Since the image of a $W$-word under any automorphism is also a $W$-word, verbal subgroups are 
characteristic (i.e.\ invariant under every automorphism). In particular, a conjugate of a $W$-word
is a $W$-word. In a free group, it is tautologically true that characteristic subgroups are verbal.

\medskip

In the sequel we are interested in verbal subgroups $G_W$ where $W$ is a single element of $F$.
For the sake of legibility, we use a lower case $w$ to denote a single element of $F$, and likewise
denote the $w$-subgroup of $G$ by $G_w$. In fact, we are interested in certain explicit words
$w$. The main $w$ of interest in this paper are the {\em $n$-fold commutators}.

\begin{definition}
Here and in the sequel, we let $\gamma_n$ denote the $n$-fold commutator of the 
generators of $F_n$. That is,
$$\gamma_1 = x, \quad \gamma_2 = [x,y], \quad \gamma_3 = [x,[y,z]], \quad \gamma_4 =[x,[y,[z,w]]]$$
and so on.
\end{definition}

\begin{definition}
A word $w \in F$ is {\em reflexive} if $w^{-1}$ is a $w$-word in $F$.
\end{definition}

\begin{lemma}
The words $\gamma_n$ are all reflexive. Moreover, any nested bracket of $n$ elements is a
$\gamma_n$ word.
\end{lemma}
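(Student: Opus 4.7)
The plan is to prove both assertions by a single simultaneous induction on $n$, relying only on identities (1) and (2) of Lemma \ref{elementary_identities}. At each level I first establish reflexivity of $\gamma_n$, and then use it together with the inductive form of the nested-bracket claim to deduce the nested-bracket claim at level $n$.

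For reflexivity, the base case $n=2$ is just identity (2): $\gamma_2^{-1}=[x,y]^{-1}=[y,x]=\gamma_2(y,x)$. For the inductive step I write $\gamma_n=[x_1,\gamma_{n-1}(x_2,\ldots,x_n)]$ and invoke identity (2) to get $\gamma_n^{-1}=[\gamma_{n-1},x_1]$, then apply identity (1) (with the roles $x \leftrightarrow \gamma_{n-1}$ and $y \leftrightarrow x_1$) to rewrite this as $[x_1^{\gamma_{n-1}},\gamma_{n-1}^{-1}]$. By the inductive hypothesis $\gamma_{n-1}^{-1}$ is already a $\gamma_{n-1}$-word, say $\gamma_{n-1}(u_2,\ldots,u_n)$ for appropriate $u_i$, and plugging this into the second slot exhibits $\gamma_n^{-1}$ as the $\gamma_n$-word $\gamma_n(x_1^{\gamma_{n-1}}, u_2, \ldots, u_n)$.

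For the nested-bracket claim, I interpret a ``nested bracket of $n$ elements'' as an iterated commutator built up one element at a time, so that its bracketing tree is a comb. Any such bracket is either $[a,L]$ or $[L,a]$, where $L$ is a nested bracket of $n-1$ elements and $a$ is a single element. By the inductive hypothesis $L = \gamma_{n-1}(b_1,\ldots,b_{n-1})$ for some $b_i$. In the first case $[a,L] = \gamma_n(a, b_1, \ldots, b_{n-1})$ directly, and in the second case $[L,a] = [a,L]^{-1}$ by identity (2), which is a $\gamma_n$-word by the reflexivity of $\gamma_n$ just established at this same induction level.

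The main subtlety is simply threading the two claims through the same induction in the correct order; no deeper obstacle appears to arise. In particular, the reduction uses nothing beyond the commutator flip (identity (2)) and the conjugation rewriting (identity (1)) --- the product identities (3)--(4) and the Hall--Witt identity (5) play no role here.
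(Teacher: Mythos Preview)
Your proof is correct and follows essentially the same inductive route as the paper, which also derives both claims from Lemma~\ref{elementary_identities} bullet~(2). One minor simplification: the second equality in bullet~(2), namely $[x,y]^{-1}=[x^y,y^{-1}]$, already yields $\gamma_n^{-1}=[x_1^{\gamma_{n-1}},\gamma_{n-1}^{-1}]$ in a single step, so the detour through bullet~(1) is unnecessary.
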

\begin{proof}
The second statement means, for example, that an expression like $[[x,y],z]$ is a $\gamma_3$ word,
that $[x,[[ [w,[u,v]], z],y]]$ is a $\gamma_5$ word, and so on. This and the reflexivity of $\gamma_n$
follow inductively from Lemma~\ref{elementary_identities}, bullet (2).
\end{proof}

For any $n$ and any $G$, the subgroups $G_{\gamma_n}$ are usually denoted $G_n$ and called
the {\em lower central series}. The successive quotients $G/G_n$ are (universal) $(n-1)$-step
nilpotent quotients of $G$ (so that for instance $G/G_2$ is abelian, $G/G_3$ is $2$-step nilpotent, and
so on).

\begin{definition}
Here and in the sequel, we define $\beta_1=[x,y]$, and for $n>1$ we let $\beta_n$ denote the 
commutator of two copies of $\beta_{n-1}$ (in different generating sets). Hence
$$\beta_1=[x,y], \quad \beta_2=[[x,y],[z,w]], \quad \beta_3=[[[x,y],[z,w]],[[s,t],[u,v]]]$$
and so on.
\end{definition}

\begin{lemma}
The words $\beta_n$ are reflexive.
\end{lemma}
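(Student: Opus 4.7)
The plan is to observe that $\beta_n$ has an obvious involutive symmetry coming from its recursive definition, namely the swap of its two halves of generators, and to combine this with bullet (2) of Lemma~\ref{elementary_identities} which says $[x,y]^{-1} = [y,x]$. Write $\beta_n = [u_n, v_n]$, where $u_n$ and $v_n$ are two copies of $\beta_{n-1}$ on disjoint sets of generators (the first $2^{n-1}$ and last $2^{n-1}$ among $x_1,\dots,x_{2^n}$, say). Then $\beta_n^{-1} = [v_n, u_n]$ by bullet (2). Because $u_n$ and $v_n$ live on disjoint generator sets, the endomorphism of $F_{2^n}$ that swaps the two halves of generators sends $\beta_n$ to $[v_n, u_n] = \beta_n^{-1}$. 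Hence $\beta_n^{-1}$ is literally the image of $\beta_n$ under a homomorphism, i.e.\ a $\beta_n$-word.

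Strictly speaking, no induction is necessary — the argument is a one-step application of the identity $[x,y]^{-1} = [y,x]$ at the outermost bracket. However, if one wanted to present it inductively and use the stronger form of bullet (2), namely $[x,y]^{-1} = [x^y, y^{-1}]$, one could proceed as follows: assuming $\beta_{n-1}$ is reflexive, write $\beta_n^{-1} = [u_n^{v_n}, v_n^{-1}]$; then $v_n^{-1}$ is a $\beta_{n-1}$-word by the inductive hypothesis, and $u_n^{v_n}$ is a $\beta_{n-1}$-word because conjugating the free generators of $u_n$ all by $v_n$ is an admissible substitution for $\beta_{n-1}$. Therefore $\beta_n^{-1}$ has the form $[\beta_{n-1}\text{-word}, \beta_{n-1}\text{-word}]$ in disjoint variables, and hence is a $\beta_n$-word.

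There is no real obstacle; the only thing to be careful about is that the two $\beta_{n-1}$-subwords inside $\beta_n$ must continue to live on disjoint generator sets after the substitution, which is automatic since we never identify variables across the two halves (the symmetry is a permutation of generators, and in the inductive variant, the conjugation affects only the first half while the inversion affects only the second).
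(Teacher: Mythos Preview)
Your proof is correct, and your inductive variant is precisely what the paper's one-line proof (``This follows inductively from Lemma~\ref{elementary_identities}, bullet (2)'') has in mind. Your direct swap-of-halves argument is in fact a cleaner route than the induction: it uses only the identity $[x,y]^{-1}=[y,x]$ at the outer bracket together with the obvious generator-permuting automorphism, and avoids appealing to the reflexivity of $\beta_{n-1}$ altogether.

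One small remark: your caveat that the two $\beta_{n-1}$-subwords must ``live on disjoint generator sets after the substitution'' is stronger than what is needed. To exhibit $\beta_n^{-1}$ as a $\beta_n$-word one only needs \emph{some} homomorphism $F_{2^n}\to F$ taking $\beta_n$ to $\beta_n^{-1}$; the images of the two halves of generators may overlap arbitrarily. The disjointness of the \emph{domain} halves is what lets you define the map independently on each half, and that is automatic.
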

\begin{proof}
This follows inductively from Lemma~\ref{elementary_identities}, bullet (2).
\end{proof}

The subgroups $G_{\beta_n}$ are usually denoted $G^{(n)}$ and called the {\em derived series}. The
successive quotients $G/G^{(n)}$ are (universal) $n$-step solvable quotients of $G$. Note 
that $\beta_1=\gamma_2$.

\subsection{\protect{$W$-length and $w$-length}}

\begin{definition}[\protect{$W$-length}]
Let $W \subset F$ be given.
Let $G$ be a group, and let $G_W$ be the subgroup generated by $W$-words. If $g \in G_W$, the
{\em $W$-length} of $g$, denoted $\l(g|W)$, 
is the minimum number of $W$-words and their inverses in $G$ whose product is $g$, and the
{\em stable $W$-length} of $g$, denoted $\sl(g|W)$, is the limit
$$\sl(g|W) = \lim_{n \to \infty} \frac {\l(g^n|W)} {n}$$
\end{definition}

If $W$ consists of a single element, we usually denote it by a lower case $w$, and define
$w$-length and stable $w$-length. We are typically concerned with this case in the sequel.

If $w$ is reflexive (which shall usually be the case in the sequel), the inverse of
a $w$-word is a $w$-word. By abuse of notation we will sometimes refer 
to both $w$-words and their inverses as $w$-words. The
meaning should be clear from context in each case.

\begin{notation}
If we need to emphasize that $g$ is in specific group $G$, we use the notation $\l_G(g|W)$ and
$\sl_G(g|W)$.

In the special case that $w=[x,y]$ in the free group generated by $x$ and $y$, we refer to $w$-length
as {\em commutator length}, and stable $w$-length as {\em stable commutator length}, and denote them by
$\cl(\cdot)$ and $\scl(\cdot)$ respectively. 
\end{notation}

\begin{notation}\label{seq_notation}
We abbreviate the product of $n$ (arbitrary) $w$-words by $w^\seq(n)$ (or $W^\seq(n)$ for $W$-words).
This should not be confused with $w^n$, the $n$th power of a specific word $w$.
\end{notation}

\begin{lemma}\label{commutator_or_trivial}
Suppose $w$ is not in $[F,F]$. Then $\sl(\cdot|w)$ is identically zero in every group.
\end{lemma}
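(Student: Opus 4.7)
The plan is to exploit the fact that $w\notin [F,F]$ to produce a bounded-length expression for arbitrarily high powers of any element of $G_w$, and then invoke Fekete's subadditive lemma to conclude that the stable length is zero.

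First I would fix notation. Write $w=w(x_1,\dots,x_k)$ in the free group $F=F_k$ on $x_1,\dots,x_k$, and let $e_i$ denote the exponent sum of $x_i$ in $w$. The hypothesis $w\notin[F,F]$ says precisely that $w$ has nonzero image in $F/[F,F]\cong\Z^k$, so some $e_i\neq 0$; without loss of generality take $|e_1|\ge 1$.

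Next I would observe the key substitution: for any group $G$ and any $h\in G$, the homomorphism $\phi:F\to G$ sending $x_1\mapsto h$ and $x_j\mapsto 1$ for $j\ge 2$ kills every letter except $x_1$, so $\phi(w)=h^{e_1}$. Thus $h^{e_1}$ is a single $w$-word in $G$, and hence $h^{-e_1}$ is the inverse of a $w$-word. In particular $\l(h^{|e_1|}\mid w)\le 1$ for every $h\in G$.

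Now let $g\in G_w$. Applying the previous observation with $h=g^n$ gives $\l(g^{n|e_1|}\mid w)\le 1$ for every positive integer $n$. Since $\l(\cdot\mid w)$ is subadditive on powers of $g$ (just concatenate decompositions), Fekete's lemma yields
$$\sl(g\mid w)=\lim_{N\to\infty}\frac{\l(g^N\mid w)}{N}=\inf_{N\ge 1}\frac{\l(g^N\mid w)}{N}\le \frac{\l(g^{n|e_1|}\mid w)}{n|e_1|}\le\frac{1}{n|e_1|}$$
for every $n$, so $\sl(g\mid w)=0$. Since $g\in G_w$ was arbitrary, this completes the proof.

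There is no real obstacle here; the one subtlety worth flagging explicitly in the write-up is simply that $\l(\cdot\mid w)$ is subadditive on powers (needed to pass from a single subsequence $N=n|e_1|$ to the full limit), and that $\l$-length permits inverses of $w$-words so the sign of $e_1$ does not matter.
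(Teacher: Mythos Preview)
Your proof is correct and follows essentially the same idea as the paper's: both arguments exploit $w\notin[F,F]$ to exhibit arbitrary powers $g^{N}$ (along the arithmetic progression $N\in |e_1|\Z$, respectively $N\in n\Z$) as single $w$-words, forcing the stable length to vanish. Your version is somewhat more explicit---you write down the concrete substitution $x_1\mapsto h$, $x_j\mapsto 1$ and invoke Fekete's lemma---whereas the paper phrases the same step via an abstract surjection $F\to\Z$ carrying $w$ to a nonzero integer, but the content is the same.
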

\begin{proof}
Since $w$ is not in $[F,F]$, there is a surjective homomorphism $F \to \Z$ sending $w$ to some nonzero
$n$. Without loss of generality, we may assume $n>0$. But then in any group $\l(g^{nm}|w) \le 1$
so $\sl(g|w)=0$.
\end{proof}

\begin{lemma}\label{monotonicity}
The functions $\l(\cdot|W)$ and $\sl(\cdot|W)$ are characteristic, and their values do 
not increase under homomorphisms.
\end{lemma}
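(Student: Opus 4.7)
The plan is to reduce the characteristic assertion to the monotonicity assertion, and prove the latter directly from the definitions. If $\phi: G \to G$ is an automorphism, then both $\phi$ and $\phi^{-1}$ are homomorphisms, so applying monotonicity to $g$ and then to $\phi(g)$ gives $\l_G(\phi(g)|W) \le \l_G(g|W) = \l_G(\phi^{-1}(\phi(g))|W) \le \l_G(\phi(g)|W)$, forcing equality; and similarly for $\sl$.

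For monotonicity under a homomorphism $\phi: G \to H$, the key observation is that if $w \in W \subset F$ and $g_1, \ldots, g_r \in G$, then by functoriality of word evaluation
$$\phi\bigl(w(g_1, \ldots, g_r)\bigr) = w\bigl(\phi(g_1), \ldots, \phi(g_r)\bigr),$$
so $\phi$ sends $W$-words in $G$ to $W$-words in $H$ (and inverses of $W$-words to inverses of $W$-words). In particular $\phi(G_W) \subset H_W$. Given $g \in G_W$ with $\l_G(g|W) = k$ and an expression $g = u_1 u_2 \cdots u_k$ where each $u_i$ is a $W$-word or its inverse in $G$, applying $\phi$ yields $\phi(g) = \phi(u_1) \cdots \phi(u_k)$, expressing $\phi(g)$ as a product of $k$ $W$-words or inverses in $H$; hence $\l_H(\phi(g)|W) \le k = \l_G(g|W)$.

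The stable statement follows by applying the $\l$-inequality to powers: since $\phi$ is a homomorphism, $\phi(g)^n = \phi(g^n)$, so
$$\frac{\l_H(\phi(g)^n|W)}{n} = \frac{\l_H(\phi(g^n)|W)}{n} \le \frac{\l_G(g^n|W)}{n},$$
and taking $n \to \infty$ gives $\sl_H(\phi(g)|W) \le \sl_G(g|W)$.

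There is no substantive obstacle here; the whole content is the naturality observation that evaluating a word commutes with homomorphisms, and the rest is unwinding the definitions of $\l(\cdot|W)$ and $\sl(\cdot|W)$.
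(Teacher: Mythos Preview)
Your proof is correct and follows essentially the same approach as the paper: both rest on the observation that a homomorphism sends $W$-words to $W$-words (so $\l$ is non-increasing, and $\sl$ follows by passing to powers and taking the limit), and both deduce the characteristic property from the fact that automorphisms permute the $W$-words. Your reduction of the characteristic claim to monotonicity via $\phi$ and $\phi^{-1}$ is a slightly more explicit packaging of what the paper states in one line, but the content is the same.
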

\begin{proof}
The group $\Aut(G)$ permutes the canonical generators of $G_W$, proving the first claim. The image
of a $W$-word under any homomorphism is a $W$-word, proving the second claim.
\end{proof}

By convention, we set $\sl(g|W) = \sl(g^n|W)/n$ whenever $g^n \in G_W$ (even if $g$ is
not necessarily in $G_W$).

\subsection{Inequalities}

If $w$ and $v$ are contained in free groups $F$ and $F'$ respectively, by abuse of notation we think of
$w$ and $v$ as being contained in the single free group $F*F'$. By convention therefore we think of
all abstract words as being contained in a single (infinitely generated) free group, which we denote
hereafter by $F$.

\begin{lemma}[fundamental inequality]\label{fundamental_inequality_lemma}
For any $v,w \in F$ and any $g \in G$ there is an inequality
$$\l_G(g|v) \le \l_G(g|w)\l_F(w|v)$$
\end{lemma}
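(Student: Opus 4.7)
The plan is to unwind the two definitions and use the functoriality built into the notion of a $W$-word: a $w$-word in $G$ is the image of $w$ under some homomorphism $F \to G$, so any factorization of $w$ in $F$ can be pushed forward to a factorization of each $w$-word in $G$.

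First I would set $m = \l_G(g|w)$ and $k = \l_F(w|v)$, and fix an optimal expression $g = w_1 \cdots w_m$ with each $w_j$ a $w$-word or its inverse in $G$, together with an optimal expression $w = v_1 \cdots v_k$ in $F$ with each $v_i$ a $v$-word or its inverse. For each $j$, pick a homomorphism $\psi_j : F \to G$ realizing $w_j$, so that $w_j = \psi_j(w)$ or $w_j = \psi_j(w)^{-1}$ according as $w_j$ is a $w$-word or its inverse. Each $v_i$ is $\phi_i(v)$ or $\phi_i(v)^{-1}$ for some $\phi_i : F \to F$.

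The key step is to apply $\psi_j$ to the factorization of $w$: then
$$\psi_j(w) = \psi_j(v_1) \cdots \psi_j(v_k),$$
and each factor $\psi_j(v_i) = (\psi_j \circ \phi_i)(v)^{\pm 1}$ is a $v$-word or the inverse of one in $G$. If $w_j = \psi_j(w)^{-1}$ instead, we use the factorization with the order reversed and each factor inverted, which is again a product of $k$ $v$-words and their inverses. Substituting these expressions into $g = w_1 \cdots w_m$ exhibits $g$ as a product of $mk$ $v$-words and their inverses in $G$, so $\l_G(g|v) \le mk = \l_G(g|w) \, \l_F(w|v)$.

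There is no real obstacle here; the whole content of the inequality is the observation that the composition of a homomorphism $F \to F$ realizing a $v$-word with a homomorphism $F \to G$ realizing a $w$-word is itself a homomorphism $F \to G$, and thus realizes a $v$-word in $G$. The only minor bookkeeping point to be careful about is the role of inverses, which is handled uniformly by the fact that the inverse of a $v$-word is explicitly allowed in the definition of $\l(\cdot|v)$ (and similarly for $w$); reflexivity of $v$ or $w$ is not needed.
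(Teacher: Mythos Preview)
Your proof is correct and follows exactly the same approach as the paper: write $g$ minimally as a product of $w$-words and their inverses, then rewrite each such factor as a minimal product of $v$-words and their inverses in $F$ and push this forward to $G$. The paper's version is a one-line sketch of precisely this argument; your write-up simply makes the underlying homomorphisms and the handling of inverses explicit.
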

\begin{proof}
Write $g$ minimally as a product of $w$-word or their inverses, and then rewrite each of these as
a minimal product of $v$-words or their inverses. This gives an expression for $g$ as a product of
$v$-words and their inverses.
\end{proof}

Note that for this lemma to make sense we must have $g \in G_w$ and $w \in F_v$.
In the case $v=[x,y]$ we obtain $\cl_G(g) \le \l_G(g|w)\cl_F(w)$. 
In the case of stable commutator length, one obtains a better estimate

\begin{lemma}[$\scl$ inequality]\label{scl_inequality_lemma}
For any $w \in F$ and any $g \in G$ and $n \in \Z$ there is an inequality
$$\scl_G(g^n) \le \l_G(g^n|w)\scl_F(w) + \tfrac 1 2 (\l_G(g^n|w) - 1)$$
Moreover,
$$\scl_G(g) \le \sl_G(g|w)(\scl_F(w) + \tfrac 1 2)$$
\end{lemma}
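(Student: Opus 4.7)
The plan is to prove the first inequality using Bavard duality applied in both $F$ and $G$, and to deduce the second inequality by dividing by $n$ and passing to the limit.

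For the reduction of the second inequality to the first: since $\scl_G(g^n)/n = \scl_G(g)$ and $\l_G(g^n|w)/n \to \sl_G(g|w)$ by definition, dividing the first inequality by $n$ and letting $n\to\infty$ yields
\[
\scl_G(g) \le \sl_G(g|w)\scl_F(w) + \sl_G(g|w)/2 = \sl_G(g|w)(\scl_F(w)+\tfrac 1 2),
\]
which is the second inequality. So I can focus on the first.

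To prove the first inequality, write $g^n = u_1 u_2 \cdots u_k$ as a minimal expression, where $k = \l_G(g^n|w)$ and each $u_i = f_i(w^{\epsilon_i})$ for some homomorphism $f_i:F\to G$ and sign $\epsilon_i\in\{\pm 1\}$. I may assume $w\in[F,F]$; otherwise Lemma~\ref{commutator_or_trivial} makes the inequality trivial. Then $g^n\in[G,G]$, and by Bavard duality
\[
\scl_G(g^n) = \tfrac{1}{2}\sup_\phi \phi(g^n)/D(\phi),
\]
where the supremum ranges over homogeneous quasimorphisms $\phi:G\to\R$ (with $D(\phi)>0$). Fix such a $\phi$; the composition $\phi\circ f_i:F\to\R$ is a homogeneous quasimorphism on $F$ with defect at most $D(\phi)$, because composition with a homomorphism preserves both homogeneity and the defect bound. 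Applying Bavard duality in $F$ to $\phi\circ f_i$ gives $|(\phi\circ f_i)(w)|\le 2D(\phi)\scl_F(w)$, so by homogeneity
\[
|\phi(u_i)| = |(\phi\circ f_i)(w^{\epsilon_i})| \le 2 D(\phi)\scl_F(w).
\]
Iterating the defect inequality $k-1$ times gives $|\phi(g^n) - \sum_{i=1}^k \phi(u_i)| \le (k-1)D(\phi)$, so by the triangle inequality $|\phi(g^n)| \le 2k D(\phi) \scl_F(w) + (k-1)D(\phi)$. Dividing by $2D(\phi)$ and taking the supremum over $\phi$ recovers the first inequality.

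There is no substantial obstacle; the argument is a routine coupling of Bavard duality in $F$ with Bavard duality in $G$. The $k\scl_F(w)$ term arises from the intrinsic Bavard bound on each $|\phi(u_i)|$ after pulling back to the free group via $f_i$, while the $(k-1)/2$ term arises entirely from the $k-1$ defect applications needed to split $\phi(g^n)$ across the factors $u_1,\ldots,u_k$. The only bookkeeping is the observation that quasimorphisms pull back through homomorphisms with no worse defect.
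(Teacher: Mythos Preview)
Your proof is correct and takes a genuinely different route from the paper's. The paper argues directly: writing $g^n = w_1 \cdots w_m$ with $m = \l_G(g^n|w)$, it observes that for even $k$ one has $(w_1\cdots w_m)^k = w_1^k \cdots w_m^k v$ where $v$ is a product of at most $(m-1)k/2$ commutators; since each $w_i$ is the image of $w$ under a homomorphism, $\scl_G(w_i)\le\scl_F(w)$, and dividing by $k$ and letting $k\to\infty$ gives the first inequality. Your approach is dual: you invoke the hard direction of Bavard duality in $G$ to write $\scl_G(g^n)$ as a supremum over homogeneous quasimorphisms, pull each quasimorphism back along the homomorphisms $f_i$ realizing the $w$-words, and apply the easy direction of Bavard in $F$ to bound each $|\phi(u_i)|$. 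The paper's argument is more elementary---it needs only the rearrangement identity $(ab)^k = a^k b^k \cdot(\text{commutators})$ and no Hahn--Banach---and it keeps the underlying combinatorics visible, which makes it easier to adapt to other stable lengths where no duality theorem is yet available (a theme of the paper). Your argument is slicker but imports the full strength of Bavard duality, which is a deeper fact than the lemma itself.
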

\begin{proof}
Express $g^n$ as a product $w_1 \cdots w_m$ of $w$-words or their inverses, where $m=\l_G(g^n|w)$.
For any even $k$ we have 
$$g^{nk} = (w_1\cdots w_m)^k = w_1^k\cdots w_m^k v$$
where $v$ is a product of at most $(m-1)k/2$ commutators (see e.g.\ \cite{Calegari_scl}, Lemma~2.24
for the case $m=2$, and apply induction). Taking the limit as $k \to \infty$ we obtain the first inequality.

Taking the limit $n \to \infty$ and stabilizing gives us the second inequality.
\end{proof}

\begin{corollary}[$\sl(w|w)$ inequality]\label{sl_inequality_lemma}
For any $w \in F$ there is an inequality
$$1/2 \le \frac {\scl(w)} {\scl(w) + 1/2} \le \sl(w|w) \le 1$$
\end{corollary}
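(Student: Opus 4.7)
The plan is to unpack the corollary as three separate inequalities, each with a different source. Let me write $w \in [F,F] \setminus \{1\}$ (otherwise $\sl(w|w) = 0$ by Lemma~\ref{commutator_or_trivial}, and the statement is vacuous or degenerate).

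First, the upper bound $\sl(w|w) \le 1$ is essentially tautological: the element $w^n$ is literally the product of $n$ copies of the $w$-word $w$ (image of $w \in F$ under the identity map $F \to F$), so $\l_F(w^n|w) \le n$ for every $n$, and dividing by $n$ and letting $n \to \infty$ gives the bound.

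Second, the middle inequality $\scl(w)/(\scl(w) + 1/2) \le \sl(w|w)$ is a direct algebraic consequence of the $\scl$ inequality established just above (Lemma~\ref{scl_inequality_lemma}). I would specialize that lemma to $G = F$ and $g = w$, yielding
\[
\scl_F(w) \le \sl_F(w|w)\bigl(\scl_F(w) + \tfrac{1}{2}\bigr).
\]
Since $\scl_F(w) + 1/2 > 0$, dividing gives the claimed rearrangement. No additional ideas are needed here; this step is just bookkeeping.

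Third, the leftmost inequality $1/2 \le \scl(w)/(\scl(w) + 1/2)$ is a statement about the scalar $\scl_F(w)$ alone. The map $t \mapsto t/(t + 1/2)$ is strictly increasing on $[0,\infty)$ and sends $1/2$ to $1/2$, so the inequality is equivalent to $\scl_F(w) \ge 1/2$. This is where the substantive input enters: it is the Duncan--Howie theorem, which asserts that every nontrivial element of the commutator subgroup of a free group has stable commutator length at least $1/2$. This is the only step that is not a routine manipulation.

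In summary, the real content is entirely in the $\scl$ inequality (for the middle bound) plus the Duncan--Howie lower bound of $1/2$ on $\scl$ in free groups (for the leftmost bound); the upper bound is immediate. The main obstacle, strictly speaking, is the invocation of the $\scl \ge 1/2$ theorem, which is an external result; everything else is a one-line rearrangement.
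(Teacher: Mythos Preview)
Your proof is correct and follows essentially the same route as the paper: the upper bound is tautological, the middle inequality comes from specializing Lemma~\ref{scl_inequality_lemma} with $g=w$, and the leftmost inequality reduces to $\scl_F(w)\ge 1/2$ in free groups (which the paper cites as \cite{Calegari_scl}, Thm.~4.111, the result you attribute to Duncan--Howie).
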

\begin{proof}
The first inequality follows from the fact that $\scl(w) \ge \tfrac 1 2$ for any nontrivial $w$ in a free group
(see e.g.\ \cite{Calegari_scl}, Theorem~4.111). The second inequality follows from Lemma~\ref{scl_inequality_lemma}
by setting $g=w$. The last inequality follows from the tautological expression of $w^n$ as a product of
$n$ copies of $w$.
\end{proof}

A natural question to ask is the following:

\begin{question}[strict inequality]
For which $w \in F$ is there is a strict inequality
$$\sl(w|w) < 1 \text{ ?}$$
\end{question}

This question has a positive answer for several interesting classes of words.

\begin{example}
For each $g$, let $x_1,y_1,\cdots,x_g,y_g$ be free generators, and let $w_g=[x_1,y_1]\cdots[x_g,y_g]$.
Since $\scl(w_g) = g- \tfrac 1 2$ (see e.g.\/ Bavard \cite{Bavard})
we obtain a lower bound $\sl(w_g|w_g) \ge 1-1/2g$. On the other hand, an
expression of $w_g^n$ as a product of $n(g-1/2) + o(n)$ commutators is also an expression as a product of
$n(g- \tfrac 1 2)/g + o(n)$ $w_g$-words. So
$$\sl([x_1,y_1]\cdots[x_g,y_g]|[x_1,y_1]\cdots[x_g,y_g]) = 1-1/2g$$
\end{example}

From the definition, $\sl(g|w) \le \l(g^n|w)/n$ for any $n$. On the other hand, this inequality can often
be definitely improved:

\begin{lemma}\label{stable_promotion_inequality}
For any $w \in F$ and any $g \in G$, there is an inequality
$$\sl_G(g|w) \le (\l(g^n|w)-1 + \sl(w|w))/n$$
\end{lemma}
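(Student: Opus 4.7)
The plan is to construct an explicit efficient factorization of $g^{nk}$ for large $k$ that isolates a single ``tail'' factor whose power can be bounded using $\sl(w|w)$. Write $g^n = h \cdot w_m$ where $m = \l(g^n|w)$, where $w_m$ is a (possibly inverted) $w$-word, and $h = w_1 \cdots w_{m-1}$ is the product of the remaining $m-1$ $w$-words in a minimal factorization.

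First I would establish the algebraic identity
\[
(h w_m)^k \;=\; \prod_{i=0}^{k-1}\bigl(w_m^i\, h\, w_m^{-i}\bigr)\cdot w_m^k,
\]
by a one-line induction on $k$. The point of this rearrangement is to migrate all copies of $w_m$ to the far right of the word, so that we can exploit the asymptotically efficient factorization of powers of a single $w$-word provided by $\sl(w|w)$.

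Each factor $w_m^i\, h\, w_m^{-i}$ is a conjugate of $h$, hence a product of $m-1$ conjugates of (possibly inverted) $w$-words; since verbal subgroups are characteristic, conjugates of $w$-words are $w$-words, so the leading product contributes at most $k(m-1)$ to the $w$-length. For the tail, since $w_m = \phi_m(w)^{\pm 1}$ for some homomorphism $\phi_m : F \to G$, any factorization of $w^k$ into $\l_F(w^k|w)$ $w$-words (and inverses) in $F$ pushes forward under $\phi_m$ to a factorization of $w_m^k$ with the same number of factors, giving $\l_G(w_m^k|w) \le \l_F(w^k|w)$.

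Combining, $\l_G(g^{nk}|w) \le k(m-1) + \l_F(w^k|w)$. Dividing by $nk$ and letting $k \to \infty$, the left side tends to $\sl_G(g|w)$ while $\l_F(w^k|w)/k$ tends to $\sl_F(w|w) = \sl(w|w)$, yielding the desired bound $(\l(g^n|w)-1+\sl(w|w))/n$. I do not expect any substantive obstacle here: the rearrangement identity is elementary once guessed, and the only minor bookkeeping concerns tracking that a minimal factorization of $g^n$ may involve inverses of $w$-words, which is accommodated uniformly since $\l(\cdot|w)$ counts $w$-words and their inverses on equal footing.
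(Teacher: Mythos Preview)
Your proof is correct and follows essentially the same approach as the paper: both arguments rearrange $(w_1\cdots w_m)^k$ via conjugation so as to isolate the $k$th power of a single $w$-word, then invoke $\sl(w|w)$ on that power and count the remaining $(m-1)k$ conjugated $w$-words. The only cosmetic difference is that the paper collects $w_1^k$ on the left (using $xy = y(y^{-1}xy)$ repeatedly) whereas you collect $w_m^k$ on the right via your explicit identity; these are mirror images of the same trick.
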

\begin{proof}
Suppose $g^n = w_1\cdots w_m$ where $m=\l(g^n|w)$. For any $k$ we can write
$$g^{nk} = w_1\cdots w_m w_1\cdots w_m \cdots w_1 \cdots w_m \text{ ($mk$ terms)}$$
$$ = w_1^k w_1'w_2' \cdots w_{(m-1)k}'$$
where each $w_i'$ is a $w$-word (since $x y = y (y^{-1}xy)$, and any conjugate of a $w$-word or
its inverse is a $w$-word or its inverse). In particular, if $k$ is large, then $w_1^k$ can be expressed
as a product of $k\,\sl(w|w) + o(k)$ $w$-words and therefore $g^{nk}$ can be expressed as a product of
$k\,\sl(w|w) + (m-1)k + o(k)$ $w$-words. Taking $k$ large, we obtain the desired inequality.
\end{proof}

\begin{corollary}
If $\l(w^n|w)=m$ then $\sl(w|w) \le (m-1)/(n-1)$.
\end{corollary}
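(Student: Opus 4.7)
The plan is to apply the preceding Lemma~\ref{stable_promotion_inequality} to the special case $g = w$ (viewed as an element of the ambient free group $F$). By hypothesis, $\l(w^n|w) = m$, so the lemma yields directly
$$\sl(w|w) \le \frac{m - 1 + \sl(w|w)}{n}.$$

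Next I would clear the denominator and isolate $\sl(w|w)$ on the left. Multiplying through by $n$ gives $n\cdot\sl(w|w) \le m - 1 + \sl(w|w)$, and rearranging yields $(n-1)\sl(w|w) \le m-1$, whence $\sl(w|w) \le (m-1)/(n-1)$ as claimed. This manipulation is legitimate because $n \ge 2$ in any nontrivial instance (for $n=1$ the statement $\l(w|w) = 1$ trivially recovers $\sl(w|w) \le 1$, which is already the $\sl(w|w)$ inequality of Corollary~\ref{sl_inequality_lemma}).

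There is no real obstacle here: the corollary is just the self-referential consequence of the previous lemma when the element being stabilized coincides with the generating word. The only conceptual subtlety is that $\sl(w|w)$ appears on both sides of the inequality from Lemma~\ref{stable_promotion_inequality}, and one must observe that it is finite (bounded above by $1$ by the $\sl(w|w)$ inequality) in order to cancel it and solve. Once that is noted, the computation is immediate.
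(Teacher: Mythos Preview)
Your proof is correct and takes exactly the same approach as the paper: substitute $g=w$ in Lemma~\ref{stable_promotion_inequality} and solve the resulting inequality for $\sl(w|w)$. You have simply spelled out the algebra (and the finiteness of $\sl(w|w)$ needed to justify it) that the paper leaves implicit.
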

\begin{proof}
Substitute $w=g$ in Lemma~\ref{stable_promotion_inequality}.
\end{proof}

We end this section with a couple of examples, illustrating the range of possibilities one
can expect in certain specific classes of groups.

\begin{example}[Free group]
If $F$ is a free group, $\scl(g)$ is positive for every $g\in [F,F]$ (in fact, $\scl(g)\ge 1/2$).
So for every $w$ with $F_w \subset [F,F]$ we get $\sl_F(g|w)\ge \scl_F(g)/(\scl_F(w)+1/2) > 0$.
In other words, for every $w$ for which $\sl(*|w)$ is not identically zero in every group, it is
positive on every element in a free group. A similar phenomenon holds in groups for which
$\scl$ is typically positive, such as hyperbolic groups (see e.g.\ \cite{Calegari_scl}, Chapter~3).
\end{example}

\begin{example}[$\SL(2,A)$]
Carter-Keller-Paige \cite{Carter_Keller_Paige} Thm.~6.1. show that if $A$ is the ring of
integers in a number field $K$ containing infinitely many units, and if $T$ is an element of 
$\SL(2,A)$ which is not a scalar matrix (i.e.\ not of the form $\lambda\cdot\id$) then
$\SL(2,A)$ has a finite index normal subgroup which is boundedly generated by conjugates of
$T$. That is, in this finite index normal subgroup, every element can be written as a bounded
number of conjugates of $T$.

For any $w$, one can find $T$ in $\SL(2,A)$ which is a $w$-word but not a scalar matrix (to see
this, observe that $\SL(2,A)$ contains $\SL(2,\Z)$ which contains a free group, none of whose
nontrivial elements is a scalar matrix). It follows that $\SL(2,A)$ has a finite index subgroup
in which every element has finite $w$-length. Consequently $\sl(*|w)$ vanishes identically in
$\SL(2,A)$ for every $w$.

A similar phenomenon holds in groups such as $\SL(n,\Z)$ when $n\ge 3$.
\end{example}

\subsection{Culler's identity}

Culler \cite{Culler}, in his analysis of commutator length in free groups, discovered the beautiful identity
$$[x,y]^3 = [y^x,x^{y^{-1}}x^{-2}][x^{y^{-1}},y^2]$$
This identity expresses algebraically the geometric fact that a once-punctured torus admits an (irregular) cover
of degree $3$ with one boundary component. We can get a lot of mileage out of this identity; this is
not really for a very specific reason, rather because this is the simplest identity which certifies
that $\scl([x,y])<1$, and we are able to ``bootstrap'' this identity to show $\sl(w|w)<1$ for many $w$.

\begin{example}[\protect{$[x,y^n]$}]\label{[x,y^n]example}
If we set $w=[x,y^n]$ then we obtain $\l(w^3|w) \le 2$ and therefore $\sl(w|w) \le \tfrac 1 2$. Together with
Corollary~\ref{sl_inequality_lemma} this implies
$$\sl([x,y^n]|[x,y^n])=\tfrac 1 2$$
\end{example}

\begin{example}[\protect{$[x,y]^2$}]\label{[x,y]^2example}
If we denote $a=[y^x,x^{y^{-1}}x^{-2}]$ and $b=[x^{y^{-1}},y^2]$ then Culler's identity says
$$([x,y]^2)^6 = ([x,y]^3)^4 = abababab$$
$$ = aabbcaabbc \text{ for }c=[b^{-1},b^{-1}a^{-1}b^{-1}]$$
$$ = a^2b^2c^2(a')^2(b')^2$$
for suitable commutators $a',b'$.
Hence $\l(([x,y]^2)^6|[x,y]^2) \le 5$ and therefore
$$\tfrac 2 3 \le \sl([x,y]^2|[x,y]^2) \le \tfrac 4 5$$
\end{example}

\subsection{\protect{$\gamma_n$} inequalities}

Generalizations of Culler's identity can be obtained from a uniform topological argument.

\begin{lemma}[generalized Culler identity]\label{generalized_Culler_identity}
For all $x$, for any $n$ and any $y$, there is an identity of the form
$$[x,y]^{2n+1} = [*,y^{n+1}]^*\cdot\text{product of }n\text{ words of the form }[*,y]^* $$
\end{lemma}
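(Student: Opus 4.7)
My plan is to induct on $n$. The base case $n = 0$ is trivial: $[x, y]^1 = [x, y^1]$ with no small commutator factors. The case $n = 1$ is precisely Culler's identity displayed above; its first factor $[y^x, x^{y^{-1}} x^{-2}]$ is (up to inversion) of the form $[*, y]^*$, as follows from the standard commutator manipulation $[A^C, B] = [A, B^{C^{-1}}]^C$ (derivable from the identities in Lemma~\ref{elementary_identities}) together with $[y, B] = [B, y]^{-1}$.

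For the inductive step, assume the identity for $n$:
$$[x, y]^{2n+1} = [A, y^{n+1}]^B \cdot V_1 V_2 \cdots V_n$$
where each $V_i$ is of the form $[*, y]^*$. I multiply both sides on the right by $[x, y]^2$. The main tool is bullet~(3) of Lemma~\ref{elementary_identities}, which gives
$$[A, y^{n+2}] = [A, y^{n+1}] \cdot [A, y]^{y^{n+1}}.$$
This lets me upgrade the big commutator from $[A, y^{n+1}]^B$ to $[A, y^{n+2}]^B$ by absorbing one factor of the form $[A, y]^{By^{n+1}}$. Of the two extra $[x, y]$ factors on the right, one is traded (via the rearrangement identity $XY = YX^Y$ and further manipulations from Lemma~\ref{elementary_identities}) for the required $[A, y]^{B y^{n+1}}$ to complete the upgrade, while the other remains as a new $[*, y]^*$ factor, giving exactly $n + 1$ small commutators on the right.

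The hard part will be executing this ``trade'' cleanly: since $[x, y]$ and $[A, y]^{B y^{n+1}}$ are generally distinct as free-group elements, the trade naively introduces an error term that must be absorbed into the pre-existing $V_i$'s without increasing their count. The key is to choose the inductive hypothesis so that the conjugating elements align automatically; generalizing Culler's choice $A = x^{y^{-1}}, B = 1$ (at $n = 1$) to $A = x^{y^{-n}}, B = 1$ should make the needed conjugate $[A, y]^{y^{n+1}}$ appear naturally upon rearranging the $2n+3$ copies of $[x,y]$, keeping the small-commutator count at exactly $n + 1$. Carefully tracking all conjugating elements through the rearrangements, making essential use of bullets (2), (3), and (4) of Lemma~\ref{elementary_identities}, then closes the induction.
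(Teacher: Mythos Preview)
Your inductive scheme has a genuine gap at exactly the point you yourself flag. You write that one of the two new $[x,y]$ factors ``is traded \ldots\ for the required $[A,y]^{By^{n+1}}$'', but in the free group $[x,y]$ and $[A,y]^{By^{n+1}}$ are distinct elements; the rearrangement $XY=YX^Y$ only conjugates $X$, it does not turn it into a different commutator. The resulting error term is not of the form $[*,y]^*$, and you give no mechanism for absorbing it into the existing $V_i$'s while preserving both their count and their shape. Your suggestion that taking $A=x^{y^{-n}}$ makes things ``align automatically'' is an assertion, not an argument: if you try the step $n=1\to n=2$ concretely, starting from $[x,y]^3=V_1\cdot[x^{y^{-1}},y^2]$ and multiplying by $[x,y]^2$, you will find that $[x^{y^{-1}},y^2]\cdot[x,y]$ is \emph{not} a conjugate of any $[C,y^3]$ (a parity/length check in the free group rules this out), so the upgrade does not happen for free and you are left with too many small factors. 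Nothing in your outline explains how to repair this.

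The paper's proof takes a completely different route and avoids this bookkeeping entirely. It is topological: one builds an explicit degree-$(2n+1)$ cover $S'$ of the once-punctured torus $S$ via a permutation representation $\rho_n:F\to S_{2n+1}$ (with $\rho_n(y)$ an $(n+1)$-cycle on the first $n+1$ points and $\rho_n(x)$ the order-reversing involution). One checks that $\rho_n([x,y])$ is a single $(2n+1)$-cycle, so $\partial S'$ is connected and represents $[x,y]^{2n+1}$; and that the $y$-curve $\beta$ lifts to one component of degree $n+1$ and $n$ components of degree $1$. A short connectivity/Euler-characteristic argument then produces disjoint dual curves $\alpha_i$, giving a genus decomposition of $S'$ whose boundary relation is exactly the desired identity. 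The covering-space picture packages the entire infinite family of identities in one uniform construction, with no inductive error terms to manage.
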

\begin{proof}
Let $F = \langle x,y \rangle$. For each $n$, we construct a permutation representation $\rho_n:F \to S_{2n+1}$.
We think of an element of $S_{2n+1}$ as a bijective function from $\lbrace 0,1,\cdots, 2n\rbrace$ to
itself, and denote such a function by an ordered string of values. With this convention, we define
$$\rho_n(x) = 2n\; 2n-1\; 2n-2 \cdots 1\; 0$$
$$\rho_n(y) = 1\; 2\; 3 \cdots n\; 0\; n+1 \cdots 2n$$
In words, $\rho_n(y)$ cycles the first $n+1$ elements and fixes the rest, 
while $\rho_n(x)$ is the involution which reverses the order.

If we think of $F$ as the fundamental group of a once-punctured torus $S$, then $x$ and $y$ can be represented
by embedded essential loops $\alpha$ and $\beta$ intersecting in one point. Since $\beta$ 
is homologically essential, it is 
non-separating. Let $S'$ be the $2n+1$-fold cover of $S$ whose monodromy representation is $\rho_n$.
By the definition of $\rho_n$, the permutation $\rho_n([x,y])$ is a single $2n+1$-cycle, and therefore $\partial S'$
consists of a single boundary component, mapping to $\partial S$ with degree $2n+1$ under the covering
projection. 

Likewise, $\beta$ has $n+1$ preimages; one (call it $\beta_0$) maps to $\beta$ with degree $n+1$. The
others $\beta_i$ for $1\le i \le n$ map with degree $1$. We claim that there are disjoint, embedded
curves $\alpha_i$ in $S'$ so that each $\alpha_i$ and $\beta_i$ intersect transversely in one point, and
$\alpha_i$ does not intersect $\beta_j$ for $i \ne j$. Evidently the lemma follows from this.

To prove this claim we first show that $\cup \beta_i$ is non-separating. Let $T$
be a component of $S' - \cup \beta_i$. Then $T$ covers $S-\beta$, and consequently $T\cap \partial S'$ is
nonempty. But this implies $T\cap \partial S' = \partial S'$, since $\partial S'$ is connected.
Since $T$ was arbitrary, every component of $S'-\cup \beta_i$ contains $\partial S'$, and therefore
$S'-\cup\beta_i$ is connected and $\cup \beta_i$ is non-separating, as claimed.

It follows that $S' - \cup_i \beta_i$ is
connected, and therefore (by reason of $\chi$) is homeomorphic to a sphere with $2n+3$ holes. We can therefore
find $n+1$ disjoint properly embedded arcs in $S' - \cup_i \beta_i$, each of which runs 
between the two boundary components corresponding to a single $\beta_i$. This proves the claim, and therefore
the lemma.
\end{proof}

As a corollary, we obtain the following inequality:

\begin{proposition}[\protect{$[x,w]$ inequality}]\label{[x,w]_inequality}
For any word $w$, the word $[x,w]$ (where $x$ is an extra free generator) satisfies
$$\l([x,w]^{2n+1}|[x,w])\le n+ \l(w^{n+1}|w)$$ and consequently
$$\sl([x,w]|[x,w]) \le \frac {1 + \sl(w|w)} {2}$$
\end{proposition}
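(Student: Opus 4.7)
The plan is to substitute $y = w$ into the generalized Culler identity of Lemma~\ref{generalized_Culler_identity} and count $[x,w]$-words on the right-hand side. This gives an identity in the free group $\langle x \rangle * F_w$ of the form
$$[x,w]^{2n+1} = [*, w^{n+1}]^* \cdot [*,w]^* \cdot [*,w]^* \cdots [*,w]^* \quad (n \text{ factors of the second type}).$$
Each factor $[*,w]^* = [a_i, w]^{b_i}$ is by definition a conjugate of $[x,w]$ under the substitution $x \mapsto a_i$ (with the free generators appearing in $w$ mapped to themselves), and conjugates of $[x,w]$-words are $[x,w]$-words. So these $n$ factors contribute $n$ to the $[x,w]$-length.

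For the initial factor $[a, w^{n+1}]^b$, write $w^{n+1} = w_1 w_2 \cdots w_m$ as a minimal product of $w$-words or their inverses, so $m = \l(w^{n+1}|w)$. Iterating bullet~(3) of Lemma~\ref{elementary_identities} gives
$$[a, w_1 w_2 \cdots w_m] = \prod_{i=1}^{m} [a, w_i]^{w_1 \cdots w_{i-1}}.$$
Each $[a, w_i]$ is the image of $[x,w]$ under a homomorphism sending $x \mapsto a$ and the free generators of $w$ to those of $w_i$, so it is a $[x,w]$-word; conjugating preserves this, and conjugating the whole product by $b$ does as well. Hence the first factor contributes at most $\l(w^{n+1}|w)$ to the $[x,w]$-length, and adding the two contributions gives the stated bound
$$\l([x,w]^{2n+1}|[x,w]) \le n + \l(w^{n+1}|w).$$

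For the stable inequality, divide by $2n+1$ and let $n \to \infty$. Since $\l(w^{n+1}|w)/(n+1) \to \sl(w|w)$, we get
$$\sl([x,w]|[x,w]) \le \lim_{n\to\infty} \frac{n + \l(w^{n+1}|w)}{2n+1} = \frac{1}{2} + \frac{1}{2}\sl(w|w),$$
as claimed. The main conceptual step is the first one — recognizing that the ``$y$'' in the generalized Culler identity can be specialized to an arbitrary word $w$, after which the expansion of $[a, w^{n+1}]$ via the commutator identities automatically packages the complexity of $w^{n+1}$ into $\l(w^{n+1}|w)$ many $[x,w]$-words. There is no real obstacle beyond carefully justifying that conjugates of $[a, w_i]$ remain $[x,w]$-words, which is just characteristicity of the verbal subgroup plus the appropriate substitution homomorphism.
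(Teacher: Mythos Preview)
Your proof is correct and follows exactly the paper's approach: apply the generalized Culler identity with $y=w$, then expand $[a,w^{n+1}]$ via bullet~(3) of Lemma~\ref{elementary_identities}. One small imprecision: when $w_i$ is the \emph{inverse} of a $w$-word, $[a,w_i]$ is not literally an $[x,w]$-word but rather the inverse of one (via $[a,u^{-1}]=[u^{-1}au,u]^{-1}$); since $\l(\cdot|[x,w])$ counts words and their inverses this does not affect the bound, and the paper glosses over the same point.
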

\begin{proof}
The proposition follows from Lemma~\ref{generalized_Culler_identity}, together with the observation that
$[x,w^{n+1}]$ can be written as a product of $\l(w^{n+1}|w)$ $[x,w]$-words, by bullet (3) of Lemma~\ref{elementary_identities}.
\end{proof}

As an important special case, one deduces:

\begin{proposition}[\protect{$\gamma_n$ inequality}]\label{gamma_n_inequality}
For any $n$ there is an inequality
$$\sl(\gamma_n|\gamma_n) \le 1 - 2^{1-n}$$
\end{proposition}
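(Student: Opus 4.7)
The plan is to prove this by induction on $n$, using Proposition~\ref{[x,w]_inequality} as the engine, and exploiting the recursive structure $\gamma_n = [x_1, \gamma_{n-1}(x_2,\dots,x_n)]$.

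For the base case, I would take $n=1$: here $\gamma_1 = x$, and since every element of any group is an $x$-word (as the image of $x$ under the homomorphism $x \mapsto g$), we have $\l(g|x) \le 1$ for all $g$, so $\sl(\gamma_1|\gamma_1) = 0 = 1 - 2^{0}$. (Equivalently, one could start at $n=2$, where the $[x,w]$ inequality applied with $w=y$ immediately gives $\sl([x,y]|[x,y]) \le 1/2$.)

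For the inductive step, assume $\sl(\gamma_{n-1}|\gamma_{n-1}) \le 1 - 2^{2-n}$. View $\gamma_n$ as a word in $n$ free generators $x_1, x_2, \dots, x_n$, and write $\gamma_n = [x_1, w]$ where $w = \gamma_{n-1}(x_2,\dots,x_n)$ is a $\gamma_{n-1}$-word in the remaining generators. Since $x_1$ is an extra free generator not appearing in $w$, Proposition~\ref{[x,w]_inequality} applies and gives
$$\sl(\gamma_n|\gamma_n) = \sl([x_1,w]|[x_1,w]) \le \frac{1 + \sl(w|w)}{2}.$$
Now $\sl(w|w) = \sl(\gamma_{n-1}|\gamma_{n-1})$ because $w$ is obtained from $\gamma_{n-1}$ by a relabeling of generators (an automorphism of the relevant free group), and stable $W$-length is characteristic by Lemma~\ref{monotonicity}. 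Substituting the inductive hypothesis yields
$$\sl(\gamma_n|\gamma_n) \le \frac{1 + (1 - 2^{2-n})}{2} = 1 - 2^{1-n},$$
which closes the induction.

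There is no real obstacle here: the argument is essentially bookkeeping, once one recognizes that the inductive definition of $\gamma_n$ plugs directly into the hypothesis of Proposition~\ref{[x,w]_inequality}. The only subtlety is to make sure the ``extra free generator'' hypothesis in that proposition is satisfied, which is automatic because the outermost variable $x_1$ in $\gamma_n$ does not appear in $\gamma_{n-1}(x_2,\dots,x_n)$.
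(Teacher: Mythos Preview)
Your proof is correct and is essentially the same as the paper's: induction on $n$ with base case $\sl(\gamma_1|\gamma_1)=0$, and the inductive step driven by Proposition~\ref{[x,w]_inequality} applied to $\gamma_n=[x_1,\gamma_{n-1}]$. You have simply written out in full the details the paper leaves implicit.
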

\begin{proof}
For $n=1$ this follows from $\sl(x|x)=0$, a special case of Lemma~\ref{commutator_or_trivial}. Then
the result follows from Proposition~\ref{[x,w]_inequality} and induction on $n$.
\end{proof}

\begin{question}
Is the estimate in Proposition~\ref{gamma_n_inequality} sharp?
\end{question}

\section{Geometry of \protect{$G_W$}}\label{cone_section}

In this section we begin a more systematic study of stable $W$-length, from a geometric point of view.
The main tool is the geometry of Cayley graphs, and their asymptotic cones. A verbal subgroup of any group
admits a tautological, characteristic, bi-invariant metric, and it is the asymptotic geometry of this
metric that gives us insight into stable $W$-length.

\subsection{Cayley graph \protect{$C_W$}}

Let $G$ be a group, and $G_W$ its verbal subgroup associated to some subset $W \subset F$. If $w$
is a single element, we assume it is reflexive. Otherwise, we assume that $W$ is symmetric; 
i.e.\ $W=W^{-1}$.

Let $C_W(G)$ (or just $C_W$ or $C$ if $G$ or $W$ are understood) denote the Cayley graph of $G_W$
with all $W$-words in $G$ as generators. We give $C$ the structure of a (path) metric space,
by declaring that each edge has length $1$. This induces a metric on $G_W$ which we call the
$W${\em -metric}, where the distance
from $g$ to $h$ is the $W$-length of $g^{-1}h$.

\begin{lemma}\label{biinvariance}
The $W$-metric is both left and right invariant for the action of $G_W$ on itself. Furthermore,
$\Aut(G)$ acts on $G_W$ by isometries, and any homomorphism $G \to H$ induces a $1$-Lipschitz
(simplicial) map from $C_W(G)$ to $C_W(H)$.
\end{lemma}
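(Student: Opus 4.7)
The plan is to reduce every assertion to a single observation already used in the proof of Lemma~\ref{monotonicity}: the image of a $W$-word in $G$ under any homomorphism $\psi:G\to G'$ is a $W$-word in $G'$. Indeed, if $v=\phi(w)$ for some $w\in W$ and some $\phi:F\to G$, then $\psi(v)=(\psi\circ\phi)(w)$ is the image of $w$ under a homomorphism $F\to G'$. Specializing to inner automorphisms, automorphisms, and arbitrary homomorphisms will handle all three claims.

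First I would check left invariance, which is immediate from the definition, since $(kg)^{-1}(kh)=g^{-1}h$. For right invariance, fix a minimal expression $g^{-1}h=v_1\cdots v_m$ as a product of $W$-words and their inverses, where $m=\l_G(g^{-1}h\mid W)$. Conjugating gives $(gk)^{-1}(hk)=(k^{-1}v_1k)(k^{-1}v_2k)\cdots(k^{-1}v_mk)$, and each factor is again a $W$-word by the observation applied to the inner automorphism $x\mapsto k^{-1}xk$ of $G$ (one composes $\phi$ with this conjugation). Hence $\l_G((gk)^{-1}(hk)\mid W)\le m$, and the reverse inequality follows by conjugating back by $k^{-1}$.

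For the action of $\Aut(G)$, verbal subgroups are characteristic, so $\Aut(G)$ permutes $G_W$. Given $\alpha\in\Aut(G)$, applying $\alpha$ to the factorization $g^{-1}h=v_1\cdots v_m$ yields $\alpha(g)^{-1}\alpha(h)=\alpha(v_1)\cdots\alpha(v_m)$, a product of $m$ $W$-words by the observation; so $d(\alpha(g),\alpha(h))\le d(g,h)$, and applying $\alpha^{-1}$ gives equality. Finally, for a homomorphism $\psi:G\to H$, the same observation shows $\psi(G_W)\subset H_W$, and applying $\psi$ to a minimal factorization of $g^{-1}h$ gives an expression of $\psi(g)^{-1}\psi(h)$ as a product of at most $m$ $W$-words in $H$. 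Thus the induced vertex map $G_W\to H_W$ is $1$-Lipschitz, and since it sends each generator to a generator, it extends canonically to a $1$-Lipschitz simplicial map $C_W(G)\to C_W(H)$.

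There is no real obstacle here: all content is packed into the ``$W$-words go to $W$-words under homomorphisms'' observation, and each of the four assertions is a one-line application of it to the factorization realizing the $W$-length.
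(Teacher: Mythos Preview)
Your proof is correct and follows essentially the same approach as the paper: both reduce all claims to the fact that homomorphisms (in particular, inner automorphisms and arbitrary automorphisms) send $W$-words to $W$-words, which is precisely the content of Lemma~\ref{monotonicity}. The paper's version is simply more compressed---it writes $d(gf,hf)=\l((g^{-1}h)^{f^{-1}}|W)=\l(g^{-1}h|W)$ in one line and then declares the remaining two assertions to be ``restatements of Lemma~\ref{monotonicity} in geometric language''---whereas you spell out the factorizations explicitly.
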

\begin{proof}
For any $f,g,h \in G_W$ we have 
$$d(fg,fh) = \l(g^{-1}h|W) = d(g,h)$$
and
$$d(gf,hf) = \l((g^{-1}h)^{f^{-1}}|W) = \l(g^{-1}h|W) = d(g,h)$$
The latter two properties are restatements of Lemma~\ref{monotonicity} in geometric language.
\end{proof}

From the construction, the stable $W$-length of an element $g \in G_W$ is just the {\em translation length}
$\tau(g)$ of $g$ on $G_W$; i.e.\ the limit $\tau(g) = \lim_{n \to \infty} d(\id,g^n)/n$.

The large-scale geometry of commutator subgroups (i.e.\ the case $w =[x,y]$)
was studied in \cite{Calegari_Zhuang}.

\subsection{Asymptotic cone}

Though $C_W$ is an interesting geometric object in its own right, stable $W$-length can be more
easily studied in an object derived from $C_W$, namely the {\em asymptotic cone}; see e.g.\ 
Gromov \cite{Gromov_asymptotic} Chapter~2 (especially p.~36) 
for an introduction to asymptotic cones in group theory and their properties.
The construction of an asymptotic cone depends (in general) on a highly non-constructive choice,
namely the choice of a non-principal ultrafilter. However, we will shortly restrict attention to a
subset of the asymptotic cone whose geometry does not depend on any choices, and therefore
our discussion of such objects is very terse.

Essentially, a non-principal ultrafilter $\omega$ picks out a limit of every absolutely 
bounded sequence $a_n$,
which we denote $\lim_\omega a_n$. We can think of $\lim_\omega$ as a function from
$\ell^\infty$ to $\R$; this function is a ring homomorphism. The ``limit'' $\lim_\omega$ 
is always contained between $\limsup$ and $\liminf$, and is equal to the honest limit of some infinite
subsequence of the $a_n$ (so, for instance, if $\lim_{n\to \infty} a_n$ exists it is equal to
$\lim_\omega a_n$) but otherwise satisfies no {\it a priori} constraint. For more details,
see Gromov {\it op. cit.}

Fix (once and for all) a non-principal ultrafilter $\omega$.

\begin{definition}
Given a subset $W \subset F$ and a group $G$, define the asymptotic cone $\widehat{A}_W(G)$ (or just 
$\widehat{A}_W$ or $\widehat{A}$
if $G$ is understood) to be the ultralimit of the sequence of metric spaces $C_W(G)$ with the $W$-metric
rescaled by a factor of $1/n$. 

Thus a point in $\widehat{A}_W(G)$ is an equivalence class of sequence 
$\seq(a_n)$ with $d(\id,a_n)=O(n)$, where $d(\seq(a_n),\seq(b_n)) = \lim_\omega d(a_n,b_n)/n$,
and where $\seq(a_n)$ and $\seq(b_n)$ are in the same equivalence class
(denoted $\seq(a_n)\sim\seq(b_n)$) iff $\lim_\omega d(a_n,b_n)/n = 0$.
\end{definition}

\begin{proposition}\label{asymptotic_group}
The asymptotic cone $\widehat{A}_W$ has the structure of a group, with a bi-invariant metric, satisfying the
law $W$.
\end{proposition}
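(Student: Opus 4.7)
The plan is to transport the group structure of $G_W$ to $\widehat{A}_W$ coordinate-wise, use bi-invariance of the $W$-metric to check that the operation descends to equivalence classes, and then observe that the law is forced because $W$-words have $W$-length $1$, a bounded quantity killed by the rescaling.

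First I would define multiplication and inversion on representatives by $[\seq(a_n)]\cdot[\seq(b_n)] := [\seq(a_n b_n)]$ and $[\seq(a_n)]^{-1} := [\seq(a_n^{-1})]$. The condition $d(\id, a_n) = O(n)$ is preserved because $d(\id, a_n b_n) \le d(\id, a_n) + d(\id, b_n)$ (using left-invariance to move $a_n$ past the metric) and $d(\id, a_n^{-1}) = d(\id, a_n)$ (right-invariance). For well-definedness, if $\seq(a_n) \sim \seq(a_n')$ and $\seq(b_n)\sim\seq(b_n')$ then
$$d(a_n b_n, a_n' b_n') \le d(a_n b_n, a_n' b_n) + d(a_n' b_n, a_n' b_n') = d(a_n, a_n') + d(b_n, b_n'),$$
where both equalities use the bi-invariance of the $W$-metric established in Lemma~\ref{biinvariance}. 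Dividing by $n$ and taking $\lim_\omega$ gives $0$, so $\seq(a_n b_n) \sim \seq(a_n' b_n')$. The same bi-invariance handles inversion. The group axioms then follow coordinate-wise from the group axioms in $G_W$, with identity $[\seq(\id)]$.

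Next I would check that the metric on $\widehat{A}_W$ is bi-invariant. For any $[\seq(c_n)]$,
$$d([\seq(c_n a_n)],[\seq(c_n b_n)]) = \lim_\omega d(c_n a_n, c_n b_n)/n = \lim_\omega d(a_n,b_n)/n = d([\seq(a_n)],[\seq(b_n)]),$$
and symmetrically on the right, again by Lemma~\ref{biinvariance}. (Finiteness of $\lim_\omega d(a_n, b_n)/n$ follows from $d(a_n, b_n) \le d(\id, a_n) + d(\id, b_n) = O(n)$.)

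Finally, to verify the law $W$, consider any $w \in W$ of arity $k$ and any $k$ elements $[\seq(a_n^{(1)})], \dots, [\seq(a_n^{(k)})]$ of $\widehat{A}_W$. Since group operations are coordinate-wise, the value of $w$ at these elements is represented by the sequence $\seq(w(a_n^{(1)}, \dots, a_n^{(k)}))$. But each entry $w(a_n^{(1)}, \dots, a_n^{(k)})$ is by definition a $W$-word in $G$, so has $W$-length at most $1$, giving $d(\id, w(a_n^{(1)}, \dots, a_n^{(k)}))/n \le 1/n \to 0$. Thus its class in $\widehat{A}_W$ is the identity, so $\widehat{A}_W$ satisfies the law $W$. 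There is no real obstacle here: the entire proof is driven by the bi-invariance of the base metric (for well-definedness of the product) and by the fact that $W$-words are a uniformly bounded distance from the identity (for the law), both of which have already been recorded.
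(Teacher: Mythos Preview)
Your proof is correct and follows essentially the same approach as the paper's: coordinate-wise group operations, well-definedness via bi-invariance (the paper phrases this by the algebraic identity $a_nb_n = w_n v_n^{a_n'} a_n'b_n'$, which is just bi-invariance unpacked), and the law $W$ holding because $W$-words have $W$-length at most $1$. You include a couple of details the paper leaves implicit (closure under the $O(n)$ growth condition, well-definedness of inversion), but otherwise the arguments coincide.
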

\begin{proof}
The group structure on $\widehat{A}$ is defined by taking $\seq(\id)$ (the constant equivalence class) 
as the identity, defining multiplication
by $\seq(a_n) \cdot\seq(b_n) = \seq(a_nb_n)$, and inverses by $\seq(a_n)^{-1} = \seq(a_n^{-1})$.
To see that this is well-defined, suppose $\seq(a_n)\sim\seq(a_n')$ and $\seq(b_n)\sim\seq(b_n')$.
By definition, we have
$a_n = w_na_n'$ and $b_n = v_nb_n'$ where $\l(w_n|W)=d(a_n,a_n')$ and $\l(v_n|W)=d(b_n,b_n')$.
Hence
$$a_nb_n = w_na_n'v_nb_n' = w_n v_n^{a_n'}a_n'b_n'$$
so 
$$\lim_\omega d(a_nb_n,a_n'b_n')/n \le \lim_\omega (d(a_n,a_n') + d(b_n,b_n'))/n = 0$$
The bi-invariance of the metric on $\widehat{A}$ follows immediately from the definition of the multiplication,
and the bi-invariance of the metric on $C$.

Finally, if $\seq(a_{1,n})\seq(a_{2,n}) \cdots \seq(a_{m,n})$ is a $W$-word in $\widehat{A}$ (with its
group structure), then $a_{1,n}a_{2,n} \cdots a_{m,n}$ is a $W$-word in $G$ for each $n$,
and therefore $$\seq(a_{1,n})\seq(a_{2,n}) \cdots \seq(a_{m,n})=\seq(a_{1,n}a_{2,n} \cdots a_{m,n})\sim\seq(\id)$$
\end{proof}

\subsection{The group \protect{$B_W$}}

We now relate $\widehat{A}_W$ to $G_W$ (in fact, to a slightly larger group depending on $G$ and $W$). 

\begin{lemma}\label{continuity_in_lines}
Let $W \subset F$ be given, and let $g_1,g_2,\cdots,g_m$ be in $G_W$. There is an estimate
$$\l((g_1^{a_1}g_2^{a_2}\cdots g_m^{a_m})^{-1}(g_1^{b_1}g_2^{b_2}\cdots g_m^{b_m})|W) \le \sum_i \l(g_i^{|b_i-a_i|}|W)$$
\end{lemma}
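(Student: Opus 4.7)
The plan is to recast the $W$-length statement as a distance estimate in the Cayley graph $C_W$, then exploit the bi-invariance of the $W$-metric established in Lemma~\ref{biinvariance}. By left-invariance, the left-hand side equals
$$d(g_1^{a_1}g_2^{a_2}\cdots g_m^{a_m},\; g_1^{b_1}g_2^{b_2}\cdots g_m^{b_m}),$$
so it suffices to bound this distance.

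The next step is to interpolate one coordinate at a time. Define
$$p_i = g_1^{b_1}\cdots g_i^{b_i}\, g_{i+1}^{a_{i+1}}\cdots g_m^{a_m}$$
for $0 \le i \le m$, so $p_0$ and $p_m$ are the two endpoints of interest. Applying the triangle inequality along this chain reduces the problem to bounding each one-step jump $d(p_{i-1},p_i)$.

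For each such jump, the tuples $p_{i-1}$ and $p_i$ differ only in the central factor $g_i^{a_i}$ versus $g_i^{b_i}$, with identical prefix $g_1^{b_1}\cdots g_{i-1}^{b_{i-1}}$ and identical suffix $g_{i+1}^{a_{i+1}}\cdots g_m^{a_m}$. Cancelling the prefix by left-invariance and the suffix by right-invariance collapses the jump to
$$d(g_i^{a_i},g_i^{b_i}) = \l(g_i^{b_i-a_i}|W).$$
Finally, since we have assumed $w$ is reflexive (or $W$ is symmetric), inverses of $W$-words are $W$-words, so $\l(g^{-k}|W)=\l(g^k|W)$ and therefore $\l(g_i^{b_i-a_i}|W)=\l(g_i^{|b_i-a_i|}|W)$. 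Summing the one-step bounds gives the claimed inequality.

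There is no real obstacle here; the statement is essentially a triangle-inequality computation once one recognizes that bi-invariance of the $W$-metric (in both factors simultaneously) allows the prefix and suffix to be stripped off independently. The only point requiring a small remark is the transition from signed to absolute exponent, which is where reflexivity of $w$ (respectively symmetry of $W$) is used.
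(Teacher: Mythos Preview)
Your proof is correct and is essentially the same argument as the paper's, phrased in the geometric language of the bi-invariant $W$-metric rather than algebraically: the paper expands the product directly and commutes each factor $g_i^{b_i-a_i}$ to the right at the cost of a conjugation, obtaining $\prod_i (g_i^{b_i-a_i})^*$, which is exactly what your interpolation-plus-bi-invariance is doing step by step. The one ingredient you make explicit that the paper leaves implicit is the use of reflexivity/symmetry of $W$ to pass from $b_i-a_i$ to $|b_i-a_i|$.
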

\begin{proof}
By expanding,
\begin{align*}
\text{LHS} &= g_m^{-a_m}g_{m-1}^{-a_{m-1}}\cdots g_2^{-a_2} g_1^{b_1-a_1} g_2^{b_2}\cdots g_m^{b_m} \\
& = g_m^{-a_m}g_{m-1}^{-a_{m-1}}\cdots g_2^{b_2-a_2}\cdots g_m^{b_m} (g_1^{b_1-a_1})^* \\
& = \prod_i (g_i^{b_i-a_i})^* 
\end{align*}
\end{proof}

\begin{remark}
It would be nice to generalize Lemma~\ref{continuity_in_lines} to certain
homogeneous expressions $g_1g_2\cdots g_m$ whose
{\em product} is in $G_W$, even if the individual $g_i$ are not. In fact, this can be done if $m=2$,
but {\em not} for higher $m$ unless $G_W = [G,G]$ (i.e.\ unless every commutator is a product of
$W$-words).

Suppose $gh \in G_W$. Then 
\begin{align*}
g^nh^n &= (gh)^n [g^{n-1},h]^*[g^{n-2},h]^* \cdots [g,h]^*\\
       &= (gh)^n [g^{n-1},gh]^* \cdots [g,gh]^*
\end{align*}
where we use the identity $[a,b] = [a,ab]^*$. Since any commutator $[*,gh]$ 
is a product of at most $2\cdot\l(gh|W)$ $W$-words, we see that $\l(g^nh^n|W) \le 3n\cdot\l(gh|W)$.
Moreover,
$$(g^{n_1}h^{n_1})^{-1}(g^{n_2}h^{n_2})= (g^{n_2-n_1}h^{n_2-n_1})^*$$
and we derive the inequality $$\l((g^{n_1}h^{n_1})^{-1}(g^{n_2}h^{n_2})|W) \le 3|n_2-n_1|\cdot\l(gh|W)$$
which is analogous to the estimate in Lemma~\ref{continuity_in_lines}.

On the other hand, in general the condition that $xyz$ is in $G_W$ does {\em not} imply that $x^ny^nz^n$
is in $G_W$ for all $n$ (in fact, for any $n$ other than $0,1$), unless $G_W=[G,G]$.
For example, if $y,z$ are arbitrary and $x$ is chosen so that $xyz \in G_W$, then
$$x^{-1}y^{-1}z^{-1} = x^{-1}z^{-1}y^{-1}[y,z]$$
Now, $x^{-1}z^{-1}y^{-1}$ is in $G_W$ because $xyz$ is, so the left hand side is in $G_W$ iff $[y,z]$
is. But $y$ and $z$ are arbitrary, so this holds for all $y,z$ if and only if $G_W=[G,G]$, as claimed.
\end{remark}

Let $B_W$ be the free group obeying the law $W$ generated by the elements of $G_W$ {\em as a set},
subject to the relation of homogeneity $g^n = g^{\star n}$ where $\star$ is the group operation in
$B_W$. There is a natural inclusion $G_W \to B_W$ of {\em sets}, taking each element of $G_W$ to
the corresponding generator.

\begin{lemma}
There is a homomorphism $\rho:B_W \to \widehat{A}_W$ defined by the formula
$$\rho(g_1\star g_2 \star \cdots \star g_m) = \seq(g_1^ng_2^n\cdots g_m^n)$$
\end{lemma}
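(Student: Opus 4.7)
The plan is to invoke the universal property of $B_W$: since $B_W$ is the free group obeying the law $W$ generated by the set $G_W$ modulo the homogeneity relations $g^n = g^{\star n}$, specifying a homomorphism $\rho: B_W \to H$ to any $W$-obeying group $H$ is the same as giving a set map $\phi: G_W \to H$ that satisfies $\phi(g^n) = \phi(g)^{\star n}$ for every $g \in G_W$ and every $n \in \Z$. So I want to define $\phi(g) := \seq(g^n) \in \widehat{A}_W$ on generators and check that this data is consistent.

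First I would check that $\seq(g^n)$ genuinely represents a point in $\widehat{A}_W$. This requires $d(\id, g^n) = O(n)$, which is immediate from subadditivity of $W$-length: $\l(g^n|W) \le n\,\l(g|W)$, and $\l(g|W)<\infty$ because $g \in G_W$. Next, by Proposition~\ref{asymptotic_group} the cone $\widehat{A}_W$ itself obeys the law $W$, so the normal-closure relations of the law $W$ in $B_W$ map to the identity automatically.

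The one remaining axiom to verify is the homogeneity relation. For any $k \in \Z$,
\begin{align*}
\phi(g^k) &= \seq((g^k)^n) = \seq(g^{kn}),\\
\phi(g)^{\star k} &= \seq(g^n)^{\star k} = \seq((g^n)^k) = \seq(g^{nk}),
\end{align*}
where the middle equality on the second line is the definition of multiplication in $\widehat{A}_W$ given in the proof of Proposition~\ref{asymptotic_group}. These equivalence classes agree on the nose, so $\phi$ descends through the universal property to a homomorphism $\rho: B_W \to \widehat{A}_W$. The stated formula then follows from $\rho$ being a homomorphism and, again, the definition of multiplication in $\widehat{A}_W$:
$$\rho(g_1 \star \cdots \star g_m) = \rho(g_1)\cdots\rho(g_m) = \seq(g_1^n)\cdots\seq(g_m^n) = \seq(g_1^n g_2^n \cdots g_m^n).$$

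There is no genuine obstacle here: the argument is purely formal once Proposition~\ref{asymptotic_group} is in hand. The only potential source of confusion is keeping straight the three distinct meanings of ``power'': the power $g^n$ taken inside $G_W$ (which is just another generator symbol of $B_W$), the power $g^{\star n}$ taken inside $B_W$, and the power $\seq(g^n)^{\star k}$ taken inside $\widehat{A}_W$. The homogeneity axiom is exactly the compatibility of the first two, and the calculation above is the compatibility of the last two.
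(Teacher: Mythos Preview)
Your proof is correct and follows essentially the same approach as the paper. Both arguments verify well-definedness by checking that the defining relations of $B_W$ (the law $W$ and the homogeneity relations) hold in $\widehat{A}_W$, invoking Proposition~\ref{asymptotic_group} for the former; your version is simply more explicit, in particular spelling out the homogeneity check and the fact that $\seq(g^n)$ actually lies in $\widehat{A}_W$.
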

\begin{proof}
From the definition of the group operation in $\widehat{A}_W$, the right hand side is equal to
the product $\seq(g_1^n)\seq(g_2^n)\cdots \seq(g_m^n)$, so the map is a homomorphism.
To see that it is well-defined, observe that all relations (i.e.\/ $W$-words, and the
homogeneity relations) are satisfied in $\widehat{A}_W$ by Proposition~\ref{asymptotic_group}, 
so the map is well-defined.
\end{proof}

\begin{remark}
It is not typically true that this homomorphism is an injection, even if $G$ is free.
\end{remark}

This homomorphism lets us map $G_W$ to $\widehat{A}_W$ by $g \to \seq(g^n)$. 
From the definitions, $\sl(g|W) = d(\seq(\id),\seq(g^n))$; in other words {\em stable $w$-length
can be recovered from the geometry of $\widehat{A}_W$ and the map $B_W \to \widehat{A}_W$}.

\subsection{Real structure}

For any group $G$, let $B_W \otimes \R$ be the free group obeying the law $W$ generated by expressions
of the form $g^t$ where $g \in G_W$ and $t \in \R$, subject to the relation of homogeneity $g^s\star g^t = g^{s+t}$.
We topologize $B_W \otimes \R$ with the weakest topology for which multiplication and inverse are
continuous, as well as each homomorphism $\R \to B_W \otimes \R$ of the form $t \to g^t$.

We extend $\rho:B_W \to \widehat{A}_W$ to $\rho:B_W \otimes \R$ by defining
$$g^t \to \seq(g^{\fl(tn)})$$
on the $\R$ subgroups, and extending to arbitrary products by using the group multiplication. Hence
$$g_1^{t_1}g_2^{t_2}\cdots g_m^{t_m} \to \seq(g_1^{\fl(t_1n)}g_2^{\fl(t_2n)}\cdots g_m^{\fl(t_mn)})$$

Notice that although $g^{\fl(tn)}$ is not necessarily the inverse of $g^{\fl(-tn)}$ if $tn$ is
not an integer, nevertheless these elements are distance at most $2\l(g|W)$ apart in $C_W$, and therefore
$\seq(g^{\fl(tn)})$ and $\seq(g^{\fl(-tn)})$ are inverse in $\widehat{A}_W$.

\begin{definition}
The {\em real cone} $A_W$ is the image of $B_W \otimes \R$ in $\widehat{A}_W$.
\end{definition}

For the remainder of the paper we restrict attention to $A_W$.
The metric on $\widehat{A}_W$ restricts to a (bi-invariant) metric on $A_W$,
consequently giving it the structure of a topological group. In the sequel, we use the notation
$\|g\|$ for $d(\id,g)$, where $g\in A_W$.

The real structure on $B_W\otimes \R$ gives rise to a natural (multiplicative) $\R$ action on $A_W$.

\begin{lemma}\label{R_action}
There is a continuous family of endomorphisms $\R \times A_W \to A_W$ with the following properties:
\begin{enumerate}
\item{the action is multiplicative --- i.e.\ $1\times$ is the identity on $A_W$, and
$\lambda \times (\mu\times g) = (\lambda\mu)\times g$ for $\lambda,\mu \in \R$ and $g \in A_W$; and}
\item{for any $g \in A_W$ and $\lambda \in \R^*$, we have $\|\lambda\times g\| = |\lambda|\cdot\|g\|$}
\end{enumerate}
\end{lemma}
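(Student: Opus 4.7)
The plan is to define the action first on $B_W \otimes \R$ and then push it down through $\rho$ to $A_W$. On $B_W \otimes \R$ one sets $\lambda \cdot (g^t) := g^{\lambda t}$ on each one-parameter subgroup and extends multiplicatively: for $x = g_1^{t_1} \star \cdots \star g_m^{t_m}$, declare $\lambda \cdot x := g_1^{\lambda t_1} \star \cdots \star g_m^{\lambda t_m}$. This is a group endomorphism of $B_W \otimes \R$ for each $\lambda$, because it preserves the homogeneity relations $g^s \star g^t = g^{s+t}$ and sends $W$-words to $W$-words (both trivial in $B_W \otimes \R$). The family $\{\lambda \cdot\}_{\lambda \in \R}$ is manifestly multiplicative and continuous in $\lambda$ in the topology on $B_W \otimes \R$. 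I then set $\lambda \times \rho(x) := \rho(\lambda \cdot x)$ and verify this descends to a well-defined continuous action on $A_W$ satisfying (1) and (2).

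The central computation, from which both well-definedness and the scaling property (2) follow in one stroke, is the identity
$$\|\rho(\lambda \cdot x)\| = |\lambda|\,\|\rho(x)\|$$
for every $x \in B_W \otimes \R$ and $\lambda \in \R$. To prove it, denote by $x_n := g_1^{\fl(t_1 n)} g_2^{\fl(t_2 n)} \cdots g_m^{\fl(t_m n)} \in G$ the canonical representing sequence of $\rho(x)$; then $\rho(\lambda \cdot x)$ is represented by $(\lambda x)_n := g_1^{\fl(\lambda t_1 n)} \cdots g_m^{\fl(\lambda t_m n)}$. For $\lambda > 0$, the elementary bound $|\fl(\lambda t_i n) - \fl(t_i \fl(\lambda n))| \le |t_i| + 1$, uniform in $n$, together with Lemma~\ref{continuity_in_lines} gives $\l((\lambda x)_n^{-1} x_{\fl(\lambda n)}|W) = O(1)$, so in $\widehat{A}_W$ the two sequences coincide: $\rho(\lambda \cdot x) = \seq(x_{\fl(\lambda n)})$. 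Hence
$$\|\rho(\lambda \cdot x)\| = \lim_\omega \frac{\l(x_{\fl(\lambda n)}|W)}{n} = \lambda \cdot \lim_\omega \frac{\l(x_{\fl(\lambda n)}|W)}{\fl(\lambda n)} = \lambda\,\|\rho(x)\|,$$
where the second equality uses $\fl(\lambda n)/n \to \lambda$ and the third uses that the subsequence reindexing preserves the limit. The case $\lambda < 0$ reduces to $\lambda = -1$, where $\rho(-1 \cdot x) = \rho(x)^{-1}$ and the norm is preserved by bi-invariance; $\lambda = 0$ is trivial.

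Granted the norm identity, the remaining claims are straightforward. Well-definedness follows because if $\rho(x) = \rho(y)$ then $\rho(x \star y^{-1}) = \id$, whence $\|\rho(\lambda \cdot(x \star y^{-1}))\| = |\lambda| \cdot 0 = 0$; since $\lambda \cdot$ is a homomorphism of $B_W \otimes \R$, this quantity equals $\|\rho(\lambda \cdot x) \rho(\lambda \cdot y)^{-1}\|$, forcing $\lambda \times \rho(x) = \lambda \times \rho(y)$. Continuity in $a$ for fixed $\lambda$, with Lipschitz constant $|\lambda|$, is the same calculation applied to $z = x \star y^{-1}$. Continuity in $\lambda$ for fixed $a = \rho(x)$ follows from continuity of $\lambda \mapsto \lambda \cdot x$ in $B_W \otimes \R$ composed with continuity of $\rho$, which is itself a direct Lipschitz consequence of Lemma~\ref{continuity_in_lines}. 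Joint continuity follows from the two separate continuities since each is locally Lipschitz. Multiplicativity (property (1)) is immediate from $\lambda \cdot (\mu \cdot x) = (\lambda \mu) \cdot x$ in $B_W \otimes \R$.

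The main obstacle is the final equality $\lim_\omega \l(x_{\fl(\lambda n)}|W)/\fl(\lambda n) = \|\rho(x)\|$ in the norm computation; on its face this depends on how the ultrafilter $\omega$ interacts with the reindexing $n \mapsto \fl(\lambda n)$. I would address this by establishing that the sequence $n \mapsto \l(x_n|W)$ is quasi-subadditive: Lemma~\ref{continuity_in_lines} applied to $x_n$ and $x_{n+k}$ bounds $\l(x_n^{-1} x_{n+k}|W)$ by $\sum_i \l(g_i^{\fl(t_i k) + O(1)}|W)$, which is in turn bounded above by $T(k) + O(1)$ where $T(k)/k$ converges (by Fekete applied to each generator separately). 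A Fekete-type argument for nearly-subadditive sequences then shows that $\l(x_n|W)/n$ converges honestly, so the ultralimit agrees with the honest limit and is invariant under any reindexing of linear growth, which closes the argument.
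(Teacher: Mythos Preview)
Your approach is the paper's: lift the scaling to $B_W\otimes\R$, push it down through $\rho$, and derive both well-definedness and property~(2) from the single identity $\|\rho(\lambda\cdot x)\|=|\lambda|\,\|\rho(x)\|$. You go further than the paper by isolating the delicate step---where the paper writes only ``approximate change of variables and elementary estimates'', you correctly observe that the equality $\lim_\omega \l(x_{\fl(\lambda n)}|W)/\fl(\lambda n)=\|\rho(x)\|$ is not automatic, since an ultralimit can change under the reindexing $n\mapsto\fl(\lambda n)$ unless the underlying sequence has an honest limit.

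Your proposed Fekete fix, however, does not close this gap as written. What Lemma~\ref{continuity_in_lines} yields is $\l(x_n^{-1}x_{n+k}|W)\le T(k)+O(1)$, where $T(k)=\sum_i \l(g_i^{\fl(|t_i|k)}|W)$ is itself subadditive with $T(k)/k\to T^*$ honestly; hence $a_{n+k}\le a_n+T(k)+C$ for $a_n:=\l(x_n|W)$. This is \emph{not} quasi-subadditivity of the sequence $(a_n)$: since $a_k\le T(k)$ by the triangle inequality, the bound $a_n+T(k)$ is in general strictly weaker than $a_n+a_k$, and combined with its symmetric counterpart it amounts only to a Lipschitz estimate $|a_{n+k}-a_n|\le Mk+C'$. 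Lipschitz control alone does not force $a_n/n$ to converge---a sequence such as $a_n=n(2+\sin\log n)$ satisfies such a bound while $a_n/n$ oscillates. A genuine Fekete argument would need $a_{n+k}\le a_n+a_k+C$, i.e.\ that $x_n^{-1}x_{n+k}$ is a \emph{conjugate of $x_k$} up to bounded $W$-length; one can arrange this when $x$ has two factors ($m=2$), but for $m\ge3$ the conjugating elements attached to the individual $g_i^{\fl(t_ik)}$ in the proof of Lemma~\ref{continuity_in_lines} do not line up, and the estimate fails. So the honest-convergence claim in your final paragraph remains unproved.
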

\begin{proof}
Let $B$ be the group freely generated by expressions of the form $g^t$ with $g \in G$ and $t\in \R$,
subject to the relations $g^t g^s = g^{t+s}$. There is a natural $\R$ action on $G$, given by
$\lambda\times g^t = g^{\lambda t}$. This action evidently preserves the $W$-subgroup of $B$, and
therefore descends to an automorphism of $B_W\otimes \R$. We claim that this action preserves the
kernel of $\rho$, and thereby defines an action on $A_W$. In other words, we need to show that
if $\|\rho(g_1^{t_1}\cdots g_m^{t_m})\|=0$ then 
$\|\rho(g_1^{\lambda t_1}\cdots g_m^{\lambda t_m})\|=0$ for any $\lambda$. But
\begin{align*}
\|\rho(g_1^{\lambda t_1}\cdots g_m^{\lambda t_m})\| &= d(\seq(\id),\seq(g_1^{\fl(\lambda t_1 n)}\cdots g_m^{\fl(\lambda t_m n)} ) ) \\
&= \lim_\omega \frac 1 n \; \l(g_1^{\fl(\lambda t_1 n)}\cdots g_m^{\fl(\lambda t_m n)}|W) \\
&= \lambda \cdot \lim_\omega \frac 1 n \; \l(g_1^{\fl(t_1 n)}\cdots g_m^{\fl(t_m n)}|W) \\
&= \lambda \cdot \|\rho(g_1^{t_1}\cdots g_m^{t_m})\|
\end{align*}
(where the third line follows from the second by an approximate change of variables and elementary estimates)
thereby establishing both claims.
\end{proof}

The existence of this $\R$ action has the following topological consequence.

\begin{lemma}\label{locally_contractible}
$A_W$ is contractible and locally contractible.
\end{lemma}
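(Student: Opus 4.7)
The plan is to use the scaling $\R$-action from Lemma~\ref{R_action} directly as a contracting homotopy. I would define a map $H : [0,1] \times A_W \to A_W$ by $H(t,g) = (1-t) \times g$. Property (2) of Lemma~\ref{R_action} gives $\|0 \times g\| = 0 \cdot \|g\| = 0$, so $0 \times g$ must equal $\id$ (the metric separates points by definition of the equivalence relation on $\widehat{A}_W$). Hence $H(0,g) = 1 \times g = g$ and $H(1,g) = 0 \times g = \id$. Continuity of $H$ is immediate from the continuity of $\R \times A_W \to A_W$ asserted in Lemma~\ref{R_action}. This will establish that $A_W$ is contractible.

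For local contractibility I would use the fact that $A_W$ is a topological group equipped with its metric topology, so left translation is a homeomorphism and it suffices to exhibit arbitrarily small contractible neighborhoods of $\id$. The open metric balls $B_r = \{g \in A_W : \|g\| < r\}$ form a neighborhood basis at $\id$, and the norm-multiplicativity of the action yields
\[
\|H(t,g)\| = (1-t)\|g\| \le \|g\| < r \quad \text{for all } t \in [0,1] \text{ and } g \in B_r,
\]
so $H$ restricts to a contraction $[0,1] \times B_r \to B_r$. Each $B_r$ therefore contracts to $\id$ within itself, and by translating we obtain such a neighborhood basis at every point of $A_W$.

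I do not anticipate a real obstacle: the nontrivial input has already been supplied by Lemma~\ref{R_action}, which packages both the existence of the continuous $\R$-action and the key estimate $\|\lambda \times g\| = |\lambda|\cdot\|g\|$. The only thing to double-check is that the latter forces $0 \times g = \id$ (rather than some other central endomorphism), which is exactly what makes the straight-line homotopy $t \mapsto (1-t)\times g$ a genuine deformation retraction onto $\id$.
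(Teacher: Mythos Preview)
Your argument is correct and is essentially the paper's own proof: the $\R$-action supplies the contracting homotopy $(t,g)\mapsto(1-t)\times g$, and the norm identity shows this preserves each metric ball about $\id$, giving local contractibility (with translation handling other points). One minor point: property~(2) of Lemma~\ref{R_action} is stated only for $\lambda\in\R^*$, so strictly speaking $\|0\times g\|=0$ should be justified by continuity of the action (or by inspecting the definition) rather than by applying that identity at $\lambda=0$.
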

\begin{proof}
The $\R$ action defines a deformation retraction of $A_W$ to the identity element, where each element
$g$ moves along the path $(1-t)\times g$ where $t$ goes from $0$ to $1$. This retraction takes the
ball of radius $r$ around the identity inside itself for all positive $t$, so $A_W$ is locally contractible.
\end{proof}

Since $\widehat{A}_W$
obeys the law $W$, so does $A_W$. However, under certain circumstances we can say much
more about $A_W$. The main theorem of this section is the following:

\begin{theorem}\label{nilpotent_implies_abelian}
Suppose $A_W$ is nilpotent. Then it is a normed vector space (in particular it is abelian).
\end{theorem}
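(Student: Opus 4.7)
The plan is to induct on the nilpotency class $c$ of $A_W$, using the $\R$-scaling from Lemma~\ref{R_action} together with bi-invariance to force $\gamma_c(A_W)$ to be trivial whenever $c \ge 2$, eventually reducing to the abelian case.

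The base case $c = 1$ says $A_W$ is abelian. In an abelian group the scaling $\phi_\lambda : A_W \to A_W$ is automatically additive in $\lambda$: for primitive elements $a = \rho(g^1)$ with $g \in G_W$ this follows from $g^\lambda g^\mu = g^{\lambda + \mu}$ in $B_W \otimes \R$, and for general products it follows by freely commuting terms. Combined with $\|\phi_\lambda g\| = |\lambda|\|g\|$ and the bi-invariant metric, this makes $A_W$ into a normed real vector space.

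For the inductive step, suppose $A_W$ has class $c \ge 2$; it suffices to show $\gamma_c(A_W)$ is trivial, which reduces the class by one. Since $A_W$ is generated by primitive elements $\rho(g^1)$ and $\gamma_c(A_W)$ is generated by $c$-fold iterated brackets, I may reduce to showing that $z = [a_1, [a_2, \ldots, [a_{c-1}, a_c]\cdots]] = \id$ for primitives $a_i = \rho(g_i^1)$. I evaluate $z^{n^c}$ in two ways. First, since $\phi_n$ is a group endomorphism and $n \times a_i = a_i^n$ on primitive $a_i$, one has $\phi_n z = [a_1^n, a_2^n, \ldots, a_c^n]$; nilpotency of class $c$ (so $\gamma_{c+1}(A_W)$ is trivial) implies the iterated bracket is multilinear on the top graded piece $\gamma_c(A_W)$, by iterated application of bullet (3) of Lemma~\ref{elementary_identities}, so $[a_1^n, \ldots, a_c^n] = z^{n^c}$ in $A_W$, giving $\|z^{n^c}\| = \|\phi_n z\| = n\|z\|$. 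Second, the same multilinearity gives $z^{n^c} = [a_1^{n^c}, [a_2, \ldots, a_c]]$, and bi-invariance yields $\|z^{n^c}\| \le 2\|[a_2, \ldots, a_c]\|$, a bound independent of $n$.

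Combining, $n\|z\| \le 2\|[a_2, \ldots, a_c]\|$ for all positive integers $n$, forcing $\|z\| = 0$; by nondegeneracy of the metric on $A_W$, $z = \id$. Hence $\gamma_c(A_W)$ is trivial, and the induction closes. The main technical point is the multilinearity of the iterated commutator on the top graded piece of the lower central series — standard in nilpotent group theory, but needing careful verification inside the asymptotic cone $A_W$ using the commutator identities of Lemma~\ref{elementary_identities} together with the vanishing of $\gamma_{c+1}(A_W)$ to ensure that all error terms from noncommutativity in lower depths are genuinely trivial.
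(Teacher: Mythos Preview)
Your proof is correct and rests on the same three ingredients as the paper's: bi-invariance (giving $\|[x,y]\|\le 2\min(\|x\|,\|y\|)$ via $[x,y]=x\cdot(x^{-1})^y$), the $\R$-scaling of Lemma~\ref{R_action}, and linearity of commutators modulo $\gamma_{c+1}$. The packaging differs slightly. You invoke full multilinearity of the $c$-fold bracket to get $\phi_n(z)=z^{n^c}$ and hence the tension $n\|z\|=\|z^{n^c}\|=\|[a_1^{n^c},b]\|\le 2\|b\|$. The paper instead works one level at a time: assuming $A_{k+1}=0$ and taking $h\in A_{k-1}$, it uses only the bilinear identities of Lemma~\ref{elementary_identities} to show $[\rho(g^t),h]=[\rho(g^{t/2^m}),h^{2^m}]$ for every $m$, then bounds this by $2\|\rho(g^{t/2^m})\|=2^{1-m}\|\rho(g^t)\|\to 0$. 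Your route is conceptually cleaner but leans on the standard multilinearity fact you flag at the end (which does need the verification you describe, since the error terms from the inner brackets lie in $\gamma_j$ for small $j$ and must be carried outward before they land in $\gamma_{c+1}$); the paper's halving trick sidesteps this by only ever manipulating a single two-variable commutator.
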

\begin{proof}
If $A_W$ is abelian, then the existence of the $\R$ action with the properties proved in Lemma~\ref{R_action}
shows that $A_W$ is a normed vector space, with norm $\|\cdot\|$. So it suffices to show $A_W$ is abelian.

For legibility, denote the $n$th element of the lower central series of $A_W$ by $A_n$, so that $A_1 = A_W$ and
$A_n = 0$ for some $n$. Suppose we have shown for some integer $k$, for all real $t$, for all $h \in A_k$ and all $g\in G_W$, that
the commutator $[\rho(g^t),h]=\id$. Since $A_W$ is generated by elements of the form $\rho(g^t)$ this implies
that $A_{k+1}=0$.

Let $h \in A_{k-1}$. Then 
\begin{align*}
[\rho(g^t),h] & = [\rho(g^{t/2}),h][\rho(g^{t/2}),h][[h,\rho(g^{t/2})],\rho(g^{t/2})^h] & \text{ by Lemma~\ref{elementary_identities} }\\ 
& = [\rho(g^{t/2}),h][\rho(g^{t/2}),h] & \text{ because }A_{k+1}=0 \\
& = [\rho(g^{t/2}),h^2][[h,\rho(g^{t/2})],h]^{-1} & \text{ by Lemma~\ref{elementary_identities} } \\
& = [\rho(g^{t/2}),h^2] & \text{ because }A_{k+1}=0
\end{align*}
By induction, $[\rho(g^t),h] = [\rho(g^{t/2^m}),h^{2^m}]$ for every positive integer $m$. Hence
$$\|[\rho(g^t),h]\| = \|[\rho(g^{t/2^m}),h^{2^m}]\| = \|g^{t/2^m} \cdot (g^{-t/2^m})^*\| \le 2\cdot 2^{-m}\|g^t\|$$
Since $m$ is arbitrary, $[\rho(g^t),h]=\id$ and therefore (assuming $k\ge 2$),
$A_k=0$. Since $A_W$ is nilpotent by hypothesis,
$A_n=0$ for some $n$, and therefore $A_W$ is abelian, as claimed.
\end{proof}

As a corollary, one obtains the following propositions.

\begin{proposition}
Suppose $A_W$ is locally compact. Then it is a (finite dimensional) normed vector space.
\end{proposition}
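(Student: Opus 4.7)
The plan is to use the scaling $\R$-action from Lemma~\ref{R_action} together with local compactness to force $A_W$ to be a finite-dimensional, simply connected, nilpotent Lie group, and then to apply Theorem~\ref{nilpotent_implies_abelian}.

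First I would verify that $A_W$ is in fact a \emph{proper} metric space (every closed ball is compact). Local compactness provides a compact neighborhood $K$ of the identity, which contains some closed ball $\overline{B}_\epsilon$. For any $r > 0$, the scaling map $(r/\epsilon)\times$ is a homeomorphism of $A_W$ (with continuous inverse $(\epsilon/r)\times$ by Lemma~\ref{R_action}(1)), and by Lemma~\ref{R_action}(2) it carries $\overline{B}_\epsilon$ onto $\overline{B}_r$. Hence every closed ball is compact, and $A_W$ is a locally compact, $\sigma$-compact Hausdorff topological group.

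Next I would exploit the scaling action to produce a \emph{contractive} automorphism. Set $\alpha := (1/2)\times$; this is a continuous automorphism with inverse $2\times$, and Lemma~\ref{R_action}(2) gives $\|\alpha^n(g)\| = 2^{-n}\|g\| \to 0$ for every $g \in A_W$. A theorem of Siebert (\emph{Contractive automorphisms on locally compact groups}, Math.\ Z.\ \textbf{191} (1986)) asserts that any locally compact group admitting a continuous contractive automorphism is a simply connected nilpotent Lie group. Applied to $(A_W,\alpha)$, this simultaneously delivers finite dimension and nilpotency.

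Finally, since $A_W$ is nilpotent, Theorem~\ref{nilpotent_implies_abelian} implies it is abelian and is a normed vector space (with norm $\|\cdot\|$ and the $\R$-action of Lemma~\ref{R_action} serving as scalar multiplication); finite dimensionality is already in hand, though it would also follow \emph{a posteriori} from the classical fact that a locally compact normed vector space is finite-dimensional. The main obstacle is the appeal to Siebert's structure theorem; a more elementary but longer route would verify directly that a proper metric group with a contracting automorphism has no small subgroups (any nontrivial subgroup inside $B_\epsilon$ could be scaled up to one inside $B_R$ for arbitrary $R$, while its image in $A_W / $(maximal compact subgroup) would have to be trivial), and then invoke Gleason--Montgomery--Zippin to deduce a Lie structure, after which one still needs the scaling action and Theorem~\ref{nilpotent_implies_abelian} to finish.
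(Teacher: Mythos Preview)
Your approach is correct and genuinely different from the paper's. Both arguments end by invoking Theorem~\ref{nilpotent_implies_abelian}, so the real work is to establish nilpotency; you do this in one stroke via Siebert's contraction theorem, while the paper takes a longer detour. The paper first applies Gleason--Montgomery--Zippin (using that $A_W$ is connected and locally contractible, Lemma~\ref{locally_contractible}) to obtain a Lie structure, then observes that $A_W$ obeys the law $W$ and hence is solvable, and finally runs a hands-on argument: the conjugation action of each $\rho(g^t)$ on the derived group $A'$ lands in the \emph{compact} isometry group of $A'$, and the scaling action forces this one-parameter family to be trivial, whence $A'$ is central. Your route is shorter and more conceptual, packaging the Lie structure and nilpotency into a single black box; the paper's route is more elementary in the sense that it avoids Siebert (and, as the paper remarks, even Gleason--Montgomery--Zippin can be replaced by Arzel\`a--Ascoli plus Pontryagin), but it leans more on the specific features at hand (the law $W$, the bi-invariant metric).

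One small caveat: Siebert's theorem as you state it is not quite right for arbitrary locally compact groups --- in general one gets a product of a simply connected nilpotent Lie group with a totally disconnected factor (think of $\mathbb{Q}_p$ under $x\mapsto px$). You need connectedness of $A_W$ to kill that factor, and you have it from Lemma~\ref{locally_contractible}; it would be worth saying so explicitly. The properness verification is correct but not strictly needed for Siebert.
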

\begin{proof}
Since $A_W$ is connected, locally path connected (by Lemma~\ref{locally_contractible}) and locally
compact by hypothesis, the Gleason-Montgomery-Zippin theorem (i.e.\ the affirmative solution of
Hilbert's fifth problem \cite{Gleason,Montgomery_Zippin}) shows that $A_W$ is a Lie group. 
Since it obeys a law, it is necessarily solvable, since non-solvable Lie groups contain 
nonabelian free subgroups.

By Theorem~\ref{nilpotent_implies_abelian}, it suffices to show it is nilpotent. Equivalently,
we need to show that for each $g\in G_W$ and $t \in \R$ the conjugation action of $\rho(g^t)$ on
the derived subgroup $A'$ is trivial. 

Suppose not, so that $\rho(g^t)$ is a $1$-parameter family of isometric automorphisms of $A'$. Since $A'$ is
finite-dimensional, the group of isometric automorphisms is compact. In particular, there are arbitrarily large
values of $t$ for which $\rho(g^t)$ is arbitrarily close to the identity in the isometry group
of $A'$. 

By the compactness of the group of isometric automorphisms of $A'$
there is a constant $C$ so that if $\iota$ is an automorphism with $d(h,\iota(h))<\epsilon$
for all $h$ in the ball about the identity in $A'$
of radius $1/2$, then $d(h,\iota(h))<C\cdot\epsilon$ for all $h$ in the ball about the
identity in $A'$ of radius $1$ (in fact, since the metric on $A'$ is bi-invariant, we can take
$C=2$, but this is unnecessary for our argument).
 
Now, for any positive $\epsilon$, there are arbitrarily large
$t$ such that $d(h,h^{\rho(g^t)})<\epsilon$
for all $h$ in the ball of radius $1$ in $A'$. Consequently, for all $s \in [1/2,1]$ and all
$h$ in the ball of radius $1$ in $A'$,
$$d(s\times h, s\times (h^{\rho(g^t)}))  = d(s\times h,(s\times h)^{\rho(g^{ts})}) < s\epsilon$$
It follows that $d(h,h^{\rho(g^{ts})})<s\epsilon C$ for all $h$ in the ball of radius $1$ in $A'$,
and all $s \in [1/2,1]$. It follows that $d(h,h^{\rho(g^{t'})})<2\epsilon C$ for all $t' \in [0,t/2]$.

Since $\epsilon$ is arbitrarily small, since $C$ is fixed, and since $t$ can be chosen arbitrarily
large for each $\epsilon$, it follows that the conjugation action of $\rho(g^t)$ is trivial for
all $g \in G_W$ and $t \in \R$. Hence $A'$ is central, and therefore $A_W$ is nilpotent, and therefore
abelian and a finite dimensional normed vector space, as claimed.
\end{proof}

\begin{remark}
In fact, we do not really need the full power of Gleason-Montgomery-Zippin. We only need to know
that the group of isometries of $A'$ is compact (which follows from Arzela-Ascoli and local compactness)
to deduce that $A$ is abelian; then one may appeal to Pontriagin's solution of Hilbert's fifth problem 
for locally compact abelian groups.
\end{remark}

Recall the notation $\beta_2:=[[x,y],[z,w]]$ for free generators $x,y,z,w$.

\begin{theorem}\label{vanishing_scl_beta_2_abelian}
Suppose in some group $G$ that stable commutator length vanishes identically. 
Then $A_W$ is abelian (and hence a normed vector space) for $W=\beta_2$. 
\end{theorem}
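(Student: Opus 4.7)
The plan is to reduce to Theorem~\ref{nilpotent_implies_abelian} by showing that $A_{\beta_2}$ is nilpotent of class at most two. Since $A_{\beta_2}$ obeys the law $\beta_2 = [[x,y],[z,w]] = \id$, the second derived subgroup $A''$ is trivial, hence $A' := [A_{\beta_2}, A_{\beta_2}]$ is abelian and $A_{\beta_2}$ is metabelian. Nilpotency of class $\le 2$ reduces to showing $[x, h] = \id$ for every $x \in A_{\beta_2}$ and $h \in A'$.

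The first step would adapt the calculation in the proof of Theorem~\ref{nilpotent_implies_abelian}, exploiting $A'' = \id$ in place of deeper vanishing. Applying Lemma~\ref{elementary_identities}(3) with $y = z = h$ gives $[x, h^2] = [x, h]^2 \cdot [[h, x], h]$; since $h \in A'$ and $[h, x] \in A'$, the correction $[[h, x], h]$ lies in $[A', A'] = A'' = \id$, so $[x, h^2] = [x, h]^2$, and iterating yields $[x, h^{2^m}] = [x, h]^{2^m}$ for all $m \ge 0$. Bi-invariance then gives $\|[x, h]^{2^m}\| = \|[x, h^{2^m}]\| \le 2\|x\|$ uniformly in $m$, so the translation length $\tau([x, h]) := \lim_m \|[x, h]^{2^m}\|/2^m$ is zero.

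This is as far as the method of Theorem~\ref{nilpotent_implies_abelian} reaches in the metabelian case: in the nilpotent setting one gets the stronger identity $[x, h] = [\rho(g^{t/2^m}), h^{2^m}]$, but the extra correction term required for that (coming from Lemma~\ref{elementary_identities}(4)) lies in $[A', A]$, which metabelianness does not force to be trivial. The hypothesis $\scl_G \equiv 0$ is what promotes $\tau([x, h]) = 0$ to $\|[x, h]\| = 0$. Concretely, I would use $\scl_G = 0$ to show that the norm on $A'$ is \emph{stable}, i.e., $\|f^k\| = k \|f\|$ for every $f \in A'$; combined with $\|[x, h]^{2^m}\| \le 2\|x\|$ this forces $\|[x, h]\| \le 2\|x\|/2^m \to 0$. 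Stability itself is proved by picking a representative $f = \seq(c_n)$ with $c_n \in G_{\beta_2}$ and using $\scl_G(c_n) = 0$ together with the generalized Culler identity (Lemma~\ref{generalized_Culler_identity}) to estimate $\l(c_n^k \mid \beta_2) \sim k \, \l(c_n \mid \beta_2)$ to leading order in $k$. The main obstacle is this delicate Culler-style combinatorial argument, which must carefully balance the sublinear commutator growth in $G$ afforded by $\scl_G = 0$ against the expansion of commutators of products of commutators into $\beta_2$-words; without this stability, metabelian asymptotic cones can have abelian commutator subgroups with $\tau < \|\cdot\|$, and $A_{\beta_2}$ would fail to be abelian.
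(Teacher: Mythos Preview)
Your reduction to Theorem~\ref{nilpotent_implies_abelian} and the observation that $A_{\beta_2}$ is metabelian are correct, as is the computation that $\|[x,h]^{2^m}\|\le 2\|x\|$ for $h\in A'$. But the step you flag as ``the main obstacle'' is a genuine gap, and your sketch does not close it. The stability claim $\|f^k\|=k\|f\|$ for $f\in A'$ is not proved, and the proposed route is problematic: an element $f\in A'$ need not be of the form $\seq(c_n)$ for a single $c_n\in G_{\beta_2}$, and even when it is, ``$\l(c_n^k|\beta_2)\sim k\,\l(c_n|\beta_2)$'' is asking that $\sl(c_n|\beta_2)$ agree with $\l(c_n|\beta_2)$, which is false in general. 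The generalized Culler identity (Lemma~\ref{generalized_Culler_identity}) produces expressions for powers of a \emph{single} commutator, not estimates on $\beta_2$-length of powers of an arbitrary element; invoking it here is not justified. Nor does the $\R$-action from Lemma~\ref{R_action} help: for $f=[x,y]\in A'$ one has $\lambda\times f=[\lambda\times x,\lambda\times y]$, which is not obviously $f^\lambda$ before abelianness is established.

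The paper's argument avoids all of this by working not with an arbitrary $h\in A'$ but directly with a generator $[g,h]$ of $A'$, and --- crucially --- by exploiting the explicit description of $A_W$ as the $\rho$-image of $B_W\otimes\R$. Any $f\in A_{\beta_2}$ has the form $\seq(f_1^{\fl(nt_1)}\cdots f_m^{\fl(nt_m)})$ with $f_i\in G_{\beta_2}$. The hypothesis $\scl\equiv 0$ gives $\cl(f_i^{\fl(nt_i)})=o(n)$ for each $i$, hence the whole product has commutator length $o(n)$. Expanding $[f_1^{\fl(nt_1)}\cdots f_m^{\fl(nt_m)},[g_n,h_n]]$ by Lemma~\ref{elementary_identities}(4) then yields a product of $o(n)$ terms of the form $[\text{commutator},[g_n,h_n]]^*$, each a $\beta_2$-word; so $\|[f,[g,h]]\|=0$ directly. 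The key idea you are missing is to use the $\scl$ hypothesis \emph{on the coordinates $f_i$ of $f$} rather than trying to prove an abstract stability property of the norm on $A'$.
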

\begin{proof}
By Theorem~\ref{nilpotent_implies_abelian} it suffices to show that $A_W$ is nilpotent. We show it
satisfies the law $[x,[y,z]]=0$.

Let $f,g,h$ be arbitrary elements of $A_W$, and suppose $f=\rho(f_1^{t_1}f_2^{t_2}\cdots f_m^{t_m})$. We
write $g=\seq(g_n)$, $h=\seq(h_n)$ and $f=\seq(f_1^{\fl(nt_1)}f_2^{\fl(nt_2)}\cdots f_m^{\fl(nt_m)})$.
By hypothesis, the commutator length of each expression $f_i^{\fl(nt_i)}$ is $o(n)$. It follows that
$$[f,[g,h]] = \seq([f_1^{\fl(nt_1)}f_2^{\fl(nt_2)}\cdots f_m^{\fl(nt_m)},[g_n,h_n]])=\seq({o(n) \; \beta_2-\text{words}})=0$$
where we use Lemma~\ref{elementary_identities} bullet (4) in the second step. 
Hence $A_W$ is nilpotent and therefore abelian.
\end{proof}

\begin{remark}
Recall that $\beta_1=[x,y]$, so that $\beta_1$-length is just commutator length. A similar argument shows
that if stable $\beta_n$-length vanishes identically in $G$, then for $W=\beta_{n+1}$ the group $A_W$
obeys the law $[x,\beta_n]$.
\end{remark}

\subsection{$\gamma_n$-quasimorphisms}

For the remainder of this section we typically
specialize to the case that $w=\gamma_n$ (recall $\gamma_2=[x,y]$,
$\gamma_3=[x,[y,z]]$ and so on). Inspired by the phenomenon of (generalized) {\em Bavard duality}
in the theory of stable commutator length, we make the following definition:

\begin{definition}
A {\em homogeneous $W$-quasimorphism} (hereafter {\em $W$-hoq}) is a Lipschitz homomorphism from
$A_W$ to $\R$.
\end{definition}

If $\phi$ is a $W$-hoq, let $d(\phi)$ denote the optimal Lipschitz constant.

For general $W$, it is not clear whether any homomorphisms from $A_W$ to $\R$ exist, let alone
Lipschitz ones. But when $A_W$ is a normed vector space (which holds, for instance, if $W=\gamma_n$,
by Theorem~\ref{nilpotent_implies_abelian}) the Hahn-Banach theorem guarantees the existence of
a rich supply of $W$-hoqs.

In fact, we have the following proposition:

\begin{lemma}\label{w_length_from_hoqs}
For any $n$ and any $g \in G_n$, there is an equality
$$\sl(g|\gamma_n) = \sup_{\phi} \phi(\rho(g))/d(\phi)$$
where the supremum is taken over all $\gamma_n$-hoqs $\phi$.
\end{lemma}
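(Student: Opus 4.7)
The plan is to reduce the equality to the Hahn--Banach theorem, applied to the normed vector space structure on $A_W$ for $W=\gamma_n$.

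First I would unwind the definitions to reformulate both sides geometrically. By construction, $\rho(g)=\seq(g^n)\in A_W$, and the norm on $A_W$ satisfies
$$\|\rho(g)\|=\lim_\omega \l(g^n|\gamma_n)/n = \sl(g|\gamma_n),$$
using that the ordinary limit $\l(g^n|\gamma_n)/n$ exists and hence agrees with the ultralimit. So the claim becomes the statement that
$$\|\rho(g)\|=\sup_{\phi}\phi(\rho(g))/d(\phi),$$
where $\phi$ ranges over Lipschitz homomorphisms $A_W\to\R$.

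Next I would invoke Theorem~\ref{nilpotent_implies_abelian}: since $A_{\gamma_n}$ obeys the law $\gamma_n$, it is nilpotent, hence a normed vector space. In particular $A_W$ is a real vector space whose scalar multiplication agrees with the $\R$-action of Lemma~\ref{R_action} (so $\lambda\times v$ truly is a scalar multiple, compatibly with the norm), and whose norm $\|\cdot\|$ is absolutely homogeneous and subadditive. A $\gamma_n$-hoq is by definition a Lipschitz homomorphism $A_W\to\R$, and on the vector space $A_W$ this is the same thing as a bounded linear functional with operator norm equal to $d(\phi)$.

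Now the two inequalities are routine. The easy direction: for any $\gamma_n$-hoq $\phi$, the Lipschitz condition gives $|\phi(\rho(g))|\le d(\phi)\cdot\|\rho(g)\|=d(\phi)\cdot\sl(g|\gamma_n)$, so $\sl(g|\gamma_n)\ge \phi(\rho(g))/d(\phi)$. For the reverse direction, define a linear functional on the $1$-dimensional subspace $\R\cdot\rho(g)\subset A_W$ by $t\cdot\rho(g)\mapsto t\|\rho(g)\|$; this has operator norm $1$ there. Applying the Hahn--Banach theorem to the seminorm $\|\cdot\|$ on $A_W$, I extend to a linear functional $\phi$ on all of $A_W$ with $|\phi(v)|\le\|v\|$ for every $v$, so $d(\phi)\le 1$, and with $\phi(\rho(g))=\|\rho(g)\|=\sl(g|\gamma_n)$. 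Hence the supremum is attained and the equality holds.

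The only real point to check is that $A_W$ is genuinely a real vector space in the functional-analytic sense, not merely an abelian topological group with an $\R$-action; but this is exactly what Theorem~\ref{nilpotent_implies_abelian} and Lemma~\ref{R_action} together assert, so no extra work is needed. The rest is standard Hahn--Banach.
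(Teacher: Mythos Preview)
Your proof is correct and follows exactly the same route as the paper's: identify $\sl(g|\gamma_n)$ with $\|\rho(g)\|$, invoke Theorem~\ref{nilpotent_implies_abelian} to get a normed vector space, and apply Hahn--Banach. You have simply spelled out the details that the paper's one-line proof leaves implicit.
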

\begin{proof}
This follows from the fact that $\|\rho(g)\| = \sl(g|\gamma_n)$, together with
Theorem~\ref{nilpotent_implies_abelian} and the Hahn-Banach theorem.
\end{proof}

Whenever $A_W$ is a vector space, it is spanned by the vectors $\rho(g)$ for $g \in G_W$. Since
a $W$-hoq $\phi$ is a homomorphism, such a function on $A_W$ is determined by its values on
$\rho(g)$. Therefore, by abuse of notation, we think of a $W$-hoq in this case as a function on
$G_W$; thus the equality from proposition~\ref{w_length_from_hoqs} would be expressed in the form
$\sl(g|\gamma_n) = \sup_{\phi} \phi(g)/d(\phi)$. On the other hand, it is not evident from
the definition how to recover (or to estimate)
$d(\phi)$ directly from the values of $\phi$ on the elements of $G_W$. We now address this issue.

\begin{definition}\label{weak_hoq_definition}
A {\em weak} $\gamma_n$-hoq is a function $\phi:G \to \R$ for which there is a least non-negative
real number $D(\phi)$ (called the {\em defect}) satisfying the following properties:
\begin{enumerate}
\item{(homogeneity) for any $g\in G$ and any $n \in \Z$ there is an equality $\phi(g^n)=n\phi(g)$;}
\item{(quasimorphism) for any $g,h \in G$ there is an inequality 
$$|\phi(g) + \phi(h) - \phi(gh)| \le D(\phi)\min(\l(g|\gamma_{n-1}),\l(h|\gamma_{n-1}),\l(gh|\gamma_{n-1}))$$}
\end{enumerate}
\end{definition}

\begin{remark}
For $n=2$, this is precisely the classical definition of a homogeneous quasimorphism. For, $\gamma_1=x$
and therefore $\l(g|\gamma_1)=1$ if $g\ne \id$ and $0$ otherwise.
\end{remark}

From the definition one can deduce some basic properties of weak $\gamma_n$-hoqs.

\begin{lemma}\label{weak_hoq_estimate}
Suppose $G$ is perfect. Then any weak $\gamma_n$-hoq $\phi:G\to \R$ satisfies the following properties:
\begin{enumerate}
\item{$\phi$ is a class function;}
\item{if $h$ is a $\gamma_n$-word, then $\phi(h)\le D(\phi)$;}
\item{for any $g\in G$ there is an estimate
$$\phi(g) \le (2\l(g|\gamma_n)-1)D(\phi)$$ and consequently
$$\sl(g|\gamma_n) \ge \phi(g)/2D(\phi)$$}
\end{enumerate}
\end{lemma}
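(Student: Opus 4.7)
The plan is to prove the three items in order, using homogeneity to amplify the weak quasimorphism inequality and extract exact identities from a priori estimates. A useful preliminary observation is that since $G$ is perfect, $G_{\gamma_k} = G$ for every $k \ge 1$: we have $G_{\gamma_2} = [G,G] = G$, and inductively $G_{\gamma_{k+1}} = [G, G_{\gamma_k}] = [G, G] = G$. Hence $\l(\cdot\,|\gamma_{n-1})$ is finite-valued on all of $G$, and $\l(g|\gamma_n)$ appearing in (3) is likewise finite.

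For (1), given $g, h \in G$ I would use the identity $(g^k)^h = (h g^k) h^{-1}$ together with $hg^k = h \cdot g^k$. Two applications of the quasimorphism bound, each controlled by $\l(h|\gamma_{n-1})$, telescope to
$$\bigl|\phi((g^k)^h) - \phi(g^k)\bigr| \le 2 D(\phi) \, \l(h|\gamma_{n-1}).$$
By homogeneity, $\phi((g^k)^h) = \phi((g^h)^k) = k\phi(g^h)$ and $\phi(g^k) = k\phi(g)$, so dividing by $k$ and letting $k \to \infty$ forces $\phi(g^h) = \phi(g)$. For (2), write a $\gamma_n$-word as $h = [a,b]$ with $b$ a $\gamma_{n-1}$-word. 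Since $\gamma_{n-1}$ is reflexive, $\l(b^{-1}|\gamma_{n-1}) = 1$, so the quasimorphism applied to the factorization $[a,b] = (aba^{-1}) \cdot b^{-1}$ gives $|\phi([a,b]) - \phi(aba^{-1}) - \phi(b^{-1})| \le D(\phi)$. Using (1) we have $\phi(aba^{-1}) = \phi(b) = -\phi(b^{-1})$, which collapses the estimate to $\phi([a,b]) \le D(\phi)$.

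For (3), I would induct on $m := \l(g|\gamma_n)$. The base case $m = 1$ is exactly (2). For the step, write $g = [a,b]\,h$ where $b$ is a $\gamma_{n-1}$-word and $h$ is a product of $m-1$ $\gamma_n$-words. The key is to peel off the commutator in two stages, each using a factor of $\gamma_{n-1}$-length one. First split $g = (aba^{-1}) \cdot (b^{-1} h)$; since $\l(aba^{-1}|\gamma_{n-1}) = 1$, the quasimorphism gives, invoking (1),
$$\phi(g) \le \phi(aba^{-1}) + \phi(b^{-1}h) + D(\phi) = \phi(b) + \phi(b^{-1}h) + D(\phi).$$
Now split $h = b \cdot (b^{-1}h)$, again using $\l(b|\gamma_{n-1}) = 1$, to get $\phi(b) + \phi(b^{-1}h) \le \phi(h) + D(\phi)$. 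Combining these bounds yields $\phi(g) \le \phi(h) + 2D(\phi) \le (2m-3)D(\phi) + 2D(\phi) = (2m-1)D(\phi)$. Applying this to $g^k$ and using homogeneity, $k\phi(g) \le (2\l(g^k|\gamma_n) - 1)D(\phi)$; dividing by $k$ and sending $k \to \infty$ produces $\sl(g|\gamma_n) \ge \phi(g)/(2D(\phi))$.

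The main subtlety is getting the constant right in (3): the naive decomposition $g = g_1 \cdot (g_2\cdots g_m)$ would use $\l(g_1|\gamma_{n-1}) \le 2$ (since a $\gamma_n$-word is the product of two $\gamma_{n-1}$-words) and yields only the weaker bound $\phi(g) \le 2mD(\phi)$. The sharper $(2m-1)D(\phi)$ requires breaking the leading commutator into its two factors and exploiting the $\min$ in the defect inequality separately at each split — the class-function property from (1) is what allows the two conjugation "corrections" to cancel cleanly against the trailing inverse.
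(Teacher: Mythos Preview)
Your proof is correct, and parts (1) and (2) match the paper's argument essentially verbatim.

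For part (3) your two-stage peel-off is valid, but the paper's argument is shorter because of an observation you missed: every $\gamma_n$-word is already a $\gamma_{n-1}$-word. Indeed, if $h_i = [a, c]$ with $c$ a $\gamma_{n-1}$-word, just define a homomorphism $F_{n-1} \to G$ sending the innermost generator to $c$ and the others to the corresponding entries of $a$'s nesting; this realizes $h_i$ as a $\gamma_{n-1}$-word. Hence $\l(h_i \mid \gamma_{n-1}) = 1$, not merely $\le 2$. With this in hand the ``naive'' decomposition $g = h_1 \cdot (h_2 \cdots h_m)$ that you dismissed already gives
\[
|\phi(g) - \phi(h_1) - \phi(h_2\cdots h_m)| \le D(\phi),
\]
and iterating yields $\phi(g) \le \sum_i \phi(h_i) + (m-1)D(\phi) \le (2m-1)D(\phi)$ directly, using (2) for the first term. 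Your commutator-splitting trick recovers the same constant but is unnecessary; the paper's route avoids it entirely.
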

\begin{proof}
The hypothesis that $G$ is perfect implies (inductively) that $\l(g|\gamma_m)$ is finite 
for every $g\in G$ and every integer $m$. To see that $\phi$ is a class function, observe 
that for any elements $g,h\in G$ and any $n$ one has
$$|\phi(hg^nh^{-1}) - \phi(h) - \phi(g^n) - \phi(h^{-1})| \le 2D(\phi)\l(g|\gamma_{n-1})$$
By homogeneity, the left hand side is equal to $n\cdot|\phi(hgh^{-1})-\phi(g)|$
whereas the right hand side is a constant independent of $n$, thus proving the first claim.

Now suppose $h$ is a $\gamma_n$-word; i.e.\ $h=[x,y]$ for some $x$ and $y$ 
where $y$ is a $\gamma_{n-1}$-word.
Then
$$|\phi(h) - \phi(xyx^{-1}) - \phi(y^{-1})| \le D(\phi)\l(y|\gamma_{n-1}) = D(\phi)$$
On the other hand, since $\phi$ is a class function and homogeneous, the left hand side is equal to
$|\phi(h)|$ and the inequality is proved.

Finally, if we express $g=h_1h_n\cdots h_m$ where each $h_i$ is a $\gamma_n$-word, then
by induction $\phi(g) \le \sum \phi(h_i) + (m-1)D(\phi) \le (2m-1)D(\phi)$. This proves bullet~(3).
\end{proof}

\begin{proposition}
The set of weak $\gamma_n$-hoqs on $G$ is a real vector space, which we denote $_nQ(G)$, and $D(\cdot)$ is
a semi-norm. If $G$ is perfect, $_nQ(G)$ is a Banach space with the norm $D(\cdot)$.
\end{proposition}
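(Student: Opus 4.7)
The plan is to verify the three assertions in order: vector space structure, seminorm property of $D$, and then (under perfection) norm and completeness. The first two are essentially formal manipulations, while the genuine content is in proving completeness.

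First I would check that $_nQ(G)$ is closed under $\R$-linear combinations, and that $D$ satisfies the seminorm axioms. Given $\phi_1,\phi_2 \in {_nQ(G)}$ and $\alpha,\beta \in \R$, the combination $\alpha\phi_1+\beta\phi_2$ is clearly homogeneous. For the defect inequality, the triangle inequality applied inside the absolute value gives
$$|(\alpha\phi_1+\beta\phi_2)(g)+(\alpha\phi_1+\beta\phi_2)(h)-(\alpha\phi_1+\beta\phi_2)(gh)| \le (|\alpha|D(\phi_1)+|\beta|D(\phi_2))\cdot m$$
where $m$ denotes the minimum of the three $\gamma_{n-1}$-lengths. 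Hence the set of admissible constants is nonempty, the least one exists (by closedness of the condition over all pairs $(g,h)$), and $D(\alpha\phi_1+\beta\phi_2)\le |\alpha|D(\phi_1)+|\beta|D(\phi_2)$. Applying the same inequality to $\alpha^{-1}(\alpha\phi)$ gives $D(\alpha\phi)=|\alpha|D(\phi)$, so $D$ is a seminorm.

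Next, assume $G$ is perfect. If $D(\phi)=0$ then $\phi$ is a genuine homomorphism $G\to\R$; but any such homomorphism factors through $G/[G,G]=0$, so $\phi\equiv 0$ and $D$ is a norm. For completeness, suppose $\{\phi_k\}$ is $D$-Cauchy. The key input is Lemma~\ref{weak_hoq_estimate}(3) applied to the difference $\phi_k-\phi_l\in {_nQ(G)}$: combined with the homogeneity identity $\phi(g^{-1})=-\phi(g)$ and the reflexivity of $\gamma_n$, it yields
$$|\phi_k(g)-\phi_l(g)|\le (2\l(g|\gamma_n)-1)\,D(\phi_k-\phi_l).$$
Perfection guarantees $\l(g|\gamma_n)<\infty$ for every $g$, so $\{\phi_k(g)\}$ is a Cauchy sequence in $\R$ for each $g$. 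Define $\phi(g):=\lim_k \phi_k(g)$.

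The final step is to verify $\phi\in {_nQ(G)}$ and $\phi_k\to\phi$ in $D$-norm. Homogeneity of $\phi$ is immediate from pointwise convergence. For the defect bound, pass to the limit in $|\phi_k(g)+\phi_k(h)-\phi_k(gh)|\le D(\phi_k)\cdot m$: since $\{D(\phi_k)\}$ is bounded (Cauchy sequences in seminormed spaces are bounded), we obtain $D(\phi)\le \limsup_k D(\phi_k)<\infty$, so $\phi\in {_nQ(G)}$. To get convergence in norm, for fixed $\epsilon>0$ choose $N$ so that $D(\phi_k-\phi_l)<\epsilon$ for $k,l\ge N$; taking $l\to\infty$ inside the defect inequality for $\phi_k-\phi_l$ yields $D(\phi_k-\phi)\le \epsilon$ for $k\ge N$. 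The main obstacle — really the only non-routine point — is legitimizing this passage to the limit, which rests squarely on the pointwise estimate derived from Lemma~\ref{weak_hoq_estimate}(3), so perfection is used in an essential way both to make $D$ a norm and to ensure that Cauchy sequences converge pointwise.
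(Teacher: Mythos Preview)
Your proof is correct and follows essentially the same route as the paper: formal verification of the vector space and seminorm axioms, perfection forcing $D(\phi)=0\Rightarrow\phi\equiv 0$, and completeness via the pointwise bound from Lemma~\ref{weak_hoq_estimate}(3) applied to $\phi_k-\phi_l$, then passing to the limit in the defect inequality. Your explicit remark about obtaining the two-sided bound $|\phi_k(g)-\phi_l(g)|$ (via $g^{-1}$ or equivalently via $\phi_l-\phi_k$) is a detail the paper leaves implicit, but otherwise the arguments coincide.
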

\begin{proof}
The defining properties of a weak $\gamma_n$-hoq are an (infinite) system of linear equalities and inequalities.
It follows by inspection that $_nQ(G)$ is a real vector space, and that $D(\cdot)$ is a semi-norm.

If $G$ is perfect, then $\l(g|\gamma_{n-1})$ is finite for any $g$, and therefore $D(\phi)=0$ if and
only if $\phi$ is a homomorphism to $\R$, which is necessarily trivial for $G$ perfect. Thus
$D(\cdot)$ is a norm, and it remains to show that $_nQ(G)$ is complete in this norm.

So, let $\phi_i$ be a sequence of weak $\gamma_n$-hoqs for which $D(\phi_i-\phi_j)$ is a Cauchy sequence; i.e.\
for all $\epsilon>0$ there is an $N$ so that $D(\phi_i-\phi_j)<\epsilon$ for $i,j>N$. By
bullet~(3) from Lemma~\ref{weak_hoq_estimate} it follows that the values of the $\phi_i$ on any
element $g$ form a Cauchy sequence, and therefore the $\phi_i$ converge pointwise to a limit $\phi$.
A pointwise limit of homogeneous functions is homogeneous; moreover, for any $g,h$ pointwise
convergence implies
$$|(\phi-\phi_j)(g+h-gh)| \le \limsup_i D(\phi_i-\phi_j)\min(\l(g|\gamma_{n-1}),\l(h|\gamma_{n-1}),\l(gh|\gamma_{n-1}))$$
and therefore $\phi$ is in $_nQ(G)$ and $D(\phi_i-\phi)\to 0$, and the lemma is thereby proved.
\end{proof}

The following lemma justifies the terminology ``weak $\gamma_n$-hoq''.

\begin{lemma}\label{hoq_is_weak_hoq}
If $G$ is perfect, every $\gamma_n$-hoq is a weak $\gamma_n$-hoq with $D(\phi)\le d(\phi)$.
\end{lemma}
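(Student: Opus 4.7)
The plan is to produce the desired weak $\gamma_n$-hoq from a given $\gamma_n$-hoq $\phi\colon A_W\to\R$ (with $W=\gamma_n$) by the tautological formula $\bar\phi(g) := \phi(\rho(g))$. Since $G$ is perfect, an easy induction gives $G_n = G$, so $\rho(g) = \seq(g^n)$ is defined for every $g\in G$. Homogeneity will be immediate: $\rho(g^k) = \rho(g)^k$ straight from the definition of $\rho$, and $\phi$ is a group homomorphism into $\R$.

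The real content is the defect bound. Using that $\phi$ is a homomorphism with $\phi(\id)=0$, and that the metric on $A_W$ is bi-invariant, I expect
$$|\bar\phi(g) + \bar\phi(h) - \bar\phi(gh)| = |\phi(\rho(g)\rho(h)\rho(gh)^{-1})| \le d(\phi)\cdot\|\rho(g)\rho(h)\rho(gh)^{-1}\|.$$
Unfolding, $\rho(g)\rho(h)\rho(gh)^{-1} = \seq(g^nh^n(gh)^{-n})$, whose norm is $\lim_\omega \l(g^nh^n(gh)^{-n}|\gamma_n)/n$. So the lemma will reduce to three parallel estimates
$$\l(g^nh^n(gh)^{-n}|\gamma_n) \le (n-1)\,\l(y|\gamma_{n-1}) \qquad\text{for each }y\in\{g,h,gh\},$$
since dividing by $n$, taking $\lim_\omega$, and then taking the minimum will yield $D(\bar\phi)\le d(\phi)$.

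The main obstacle is extracting these three parallel estimates from a single collection identity. The starting point will be the two forms of that identity recorded in the remark following Lemma~\ref{continuity_in_lines}, namely $g^nh^n = (gh)^n [g^{n-1},h]^*\cdots[g,h]^* = (gh)^n [g^{n-1},gh]^*\cdots[g,gh]^*$: conjugating by $(gh)^{-n}$ displays $g^nh^n(gh)^{-n}$ as a product of $n-1$ conjugates of commutators $[g^k,y]^*$ with $y\in\{h,gh\}$, each of which I will expand into $\l(y|\gamma_{n-1})$ honest $\gamma_n$-words via Lemma~\ref{elementary_identities}(3) after writing $y$ minimally as a product of $\gamma_{n-1}$-words. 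For the third case $y=g$ I will apply the same collection identity but with $(g,h)$ replaced by $(h^{-1},g^{-1})$, which gives $(gh)^{-n} = h^{-n}g^{-n}\tilde C^{-1}$ with $\tilde C$ a product of $n-1$ commutators $[h^{-k},g^{-1}]^*$; the outer $g^{\pm n}$ and $h^{\pm n}$ factors telescope, leaving $g^nh^n(gh)^{-n} = \tilde C^{-1}$, which is then bounded exactly as in the other two cases, using reflexivity of $\gamma_{n-1}$ to identify $\l(g^{-1}|\gamma_{n-1})$ with $\l(g|\gamma_{n-1})$. Once these three estimates are assembled, the lemma drops out.
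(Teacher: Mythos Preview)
Your proposal is correct and follows essentially the same route as the paper: define $\bar\phi(g)=\phi(\rho(g))$, reduce the defect bound to estimating $\l(g^mh^m(gh)^{-m}|\gamma_n)$ via the collection identity from the remark after Lemma~\ref{continuity_in_lines}, and then expand each commutator $[*,y]$ using Lemma~\ref{elementary_identities}(3) with $y$ written as a minimal product of $\gamma_{n-1}$-words. The only difference is cosmetic: the paper handles just the case $y=h$ explicitly and invokes ``by symmetry'' for the other two, whereas you spell out all three cases (your substitution $(g,h)\mapsto(h^{-1},g^{-1})$ for the $y=g$ case is a clean way to make that symmetry concrete).
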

\begin{proof}
Let $\phi:A_{\gamma_n} \to \R$ be a $d(\phi)$-Lipschitz homomorphism, and by abuse
of notation we let $\phi:G \to \R$ be the function defined by $\phi(g):=\phi(\rho(g))$.
We estimate 
\begin{align*}
|\phi(g) + \phi(h) - \phi(gh)|&= |\phi(\rho(g*h*(h^{-1}g^{-1})))| \\
&\le d(\phi)\|\seq(g^mh^m h^{-1}g^{-1}\cdots h^{-1} g^{-1})\| \\
&=d(\phi) \lim_{m \to \infty} \frac 1 m \l(g^mh^m h^{-1}g^{-1}\cdots h^{-1} g^{-1}|\gamma_n) \\
&\le d(\phi) \lim_{m \to \infty} \frac 1 m \l([g,h]^*[g^2,h]^*\cdots [g^{m-1},h]^*|\gamma_n) \\
&\le d(\phi) \l(h|\gamma_{n-1})
\end{align*}
By symmetry, we obtain the estimate
$$|\phi(g) + \phi(h) - \phi(gh)| \le d(\phi)\min(\l(g|\gamma_{n-1}),\l(h|\gamma_{n-1}),\l(gh|\gamma_{n-1}))$$
so that $\phi$ is a weak $\gamma_n$-hoq with $D(\phi) \le d(\phi)$, as claimed.
\end{proof}

We therefore obtain the following generalization of Bavard duality.

\begin{theorem}[$\gamma_n$-Duality theorem]\label{duality_theorem}
For any perfect group $G$, and any $g \in G$ there is an inequality
$$\sup_\phi \phi(g)/D(\phi) \ge \sl(g|\gamma_n) \ge \sup_\phi \phi(g)/2D(\phi)$$
where the supremum is taken over all weak $\gamma_n$-hoqs.
\end{theorem}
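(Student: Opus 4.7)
The plan is to assemble the theorem from two pieces that have essentially already been proved. For the right-hand (lower) inequality, I would simply invoke Lemma~\ref{weak_hoq_estimate}(3), which states that any weak $\gamma_n$-hoq $\phi$ on a perfect group satisfies $\phi(g) \le (2\l(g|\gamma_n)-1)D(\phi)$. Replacing $g$ by $g^m$, using the homogeneity of $\phi$, dividing by $m$, and letting $m\to\infty$ gives $\phi(g) \le 2\sl(g|\gamma_n) D(\phi)$. Rearranging and taking the supremum over all weak $\gamma_n$-hoqs yields $\sl(g|\gamma_n) \ge \sup_\phi \phi(g)/2D(\phi)$.

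For the left-hand (upper) inequality, I would go through the asymptotic cone. By Theorem~\ref{nilpotent_implies_abelian} the group $A_{\gamma_n}$ is a normed vector space. The Hahn--Banach theorem then produces a genuine $1$-Lipschitz homomorphism $\psi:A_{\gamma_n}\to\R$ with $\psi(\rho(g)) = \|\rho(g)\| = \sl(g|\gamma_n)$, and more generally Lemma~\ref{w_length_from_hoqs} gives the exact equality $\sl(g|\gamma_n) = \sup_\psi \psi(\rho(g))/d(\psi)$ where the sup runs over all $\gamma_n$-hoqs. Pulling $\psi$ back along the map $g\mapsto\rho(g)$ gives a function on $G$ that, by Lemma~\ref{hoq_is_weak_hoq}, is a weak $\gamma_n$-hoq with $D(\psi)\le d(\psi)$. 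Replacing $\psi$ by $-\psi$ if necessary so that $\psi(g)\ge 0$, the inequality $D(\psi)\le d(\psi)$ yields $\psi(g)/D(\psi) \ge \psi(g)/d(\psi)$. Since the class of weak $\gamma_n$-hoqs contains (the pullbacks of) all $\gamma_n$-hoqs, taking suprema gives
\[
\sup_{\phi \text{ weak}} \phi(g)/D(\phi) \ge \sup_{\psi \text{ hoq}} \psi(g)/d(\psi) = \sl(g|\gamma_n),
\]
which is the required inequality.

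There is no real obstacle at this stage: the two genuinely substantive steps (the Hahn--Banach step, which is encapsulated in Lemma~\ref{w_length_from_hoqs}, and the comparison of defects, encapsulated in Lemma~\ref{hoq_is_weak_hoq}) have already been established. The only mild care needed is the sign convention in the upper bound --- one must make sure the hoq extracted from Hahn--Banach is oriented so that $\phi(g)\ge 0$ before comparing $\phi(g)/D(\phi)$ with $\phi(g)/d(\phi)$ --- but this is just a matter of possibly negating $\phi$. The factor of $2$ gap between the two sides of the stated inequality is intrinsic and traces back to the factor $(2\l(g|\gamma_n)-1)$ in Lemma~\ref{weak_hoq_estimate}(3), which comes from turning a product of $\gamma_n$-words into a homomorphism estimate one factor at a time.
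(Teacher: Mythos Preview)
Your proposal is correct and follows exactly the same approach as the paper: the lower bound is Lemma~\ref{weak_hoq_estimate}(3) (which already contains the stable statement, so your limiting argument is redundant but harmless), and the upper bound comes from combining Lemma~\ref{w_length_from_hoqs} with Lemma~\ref{hoq_is_weak_hoq}. Your write-up is simply a more detailed unpacking of the paper's two-sentence proof.
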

\begin{proof}
The upper bound follows from Lemma~\ref{hoq_is_weak_hoq} and Lemma~\ref{w_length_from_hoqs}, 
and the lower bound from  Lemma~\ref{weak_hoq_estimate}.
\end{proof}

An interesting application of this duality theorem is an {\it a priori} estimate of the
ratio of $\scl(*)$ and $\sl(*|\gamma_n)$ in any perfect group. This estimate follows
from the surprising fact that one may bound from below the stable $\gamma_{n-1}$ length of a word from
the value of a weak $\gamma_n$-hoq, providing $n\ge 3$.

\begin{lemma}\label{one_better_lemma}
Let $G$ be a perfect group. Then any weak $\gamma_n$-hoq $\phi:G \to \R$ satisfies the
following properties:
\begin{enumerate}
\item{if $h$ is a $\gamma_{n-1}$-word, then $\phi(h)\le D(\phi)$}
\item{for any $g \in G$ there is an estimate
$$\phi(g) \le (2\l(g|\gamma_{n-1})-1)D(\phi)$$
and consequently
$$\sl(g|\gamma_{n-1}) \ge \phi(g)/2D(\phi)$$}
\end{enumerate}
\end{lemma}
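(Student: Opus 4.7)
The plan is to mirror the proof of Lemma~\ref{weak_hoq_estimate}, but to exploit the feature that the defect condition of a weak $\gamma_n$-hoq is measured in terms of $\gamma_{n-1}$-length rather than $\gamma_n$-length. The key observation is simply that a $\gamma_{n-1}$-word $h$ itself has $\l(h|\gamma_{n-1})=1$, so the defect inequality applied in a context where $h$ appears contributes only a single $D(\phi)$ of error. Implicitly I am working with $n\ge 3$, which is needed for $\gamma_{n-2}$ to be defined and is consistent with the preceding discussion.

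For bullet~(1), I would use the structural fact that a $\gamma_{n-1}$-word has the form $h=[x,u]$ for some $x\in G$ and some $\gamma_{n-2}$-word $u$. Writing $h=(xux^{-1})\cdot u^{-1}$ and applying the quasimorphism inequality of Definition~\ref{weak_hoq_definition}(2) with the minimum bounded by $\l(h|\gamma_{n-1})=1$, one gets
$$|\phi(xux^{-1})+\phi(u^{-1})-\phi(h)|\le D(\phi).$$
Because $\phi$ is a class function (this is bullet~(1) of Lemma~\ref{weak_hoq_estimate}, valid since $G$ is perfect) and homogeneous, $\phi(xux^{-1})=\phi(u)$ and $\phi(u^{-1})=-\phi(u)$, so the left-hand side collapses to $|\phi(h)|$, yielding $|\phi(h)|\le D(\phi)$.

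For bullet~(2), I would write $g=h_1 h_2\cdots h_m$ as a product of $m=\l(g|\gamma_{n-1})$ many $\gamma_{n-1}$-words, using reflexivity of $\gamma_{n-1}$ to absorb inverses. Inducting on the number of factors via the defect inequality, where each time the minimum is taken to be $\l(h_i|\gamma_{n-1})=1$, one obtains
$$\bigl|\phi(g)-\sum_{i=1}^m \phi(h_i)\bigr|\le (m-1)D(\phi).$$
Combining with bullet~(1) gives $|\phi(g)|\le (2m-1)D(\phi)$. The stable form follows by applying this estimate to $g^k$, using homogeneity to replace $\phi(g^k)$ by $k\phi(g)$, dividing by $k$, and letting $k\to\infty$ so that $\l(g^k|\gamma_{n-1})/k\to\sl(g|\gamma_{n-1})$; this yields $\sl(g|\gamma_{n-1})\ge \phi(g)/(2D(\phi))$.

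There is no real obstacle here: the argument is a faithful transcription of Lemma~\ref{weak_hoq_estimate} with the role of $\gamma_n$-length downgraded to $\gamma_{n-1}$-length. The only conceptual subtlety worth flagging is that this improvement is what will drive the comparison between $\sl(g|\gamma_n)$ and $\scl(g)$: a weak $\gamma_n$-hoq of bounded defect automatically controls stable $\gamma_{n-1}$-length from below, and iterating this observation down to $n=2$ is presumably the mechanism behind the Comparison Theorem.
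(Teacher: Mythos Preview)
Your proof is correct, and for bullet~(1) it is genuinely simpler than the paper's argument. The paper does \emph{not} proceed by directly exploiting that $\l(h|\gamma_{n-1})=1$ in the $\min$; instead it runs a limiting argument reminiscent of the homogenization trick for ordinary quasimorphisms. Concretely, for $g$ arbitrary and $k$ a $\gamma_{n-2}$-word, the paper writes $g^mk^m(gk)^{-m}$ as a product of $m(m-1)/2$ conjugates of $[g,k]$, bounds $|\phi(g^mk^m(gk)^{-m})|$ linearly in $m$ via Lemma~\ref{weak_hoq_estimate}, compares this to $(m(m-1)/2)\phi([g,k])$ up to an error of $(m(m-1)/2)D(\phi)$, and lets $m\to\infty$ to extract $|\phi([g,k])|\le D(\phi)$.

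Your route is more direct: you notice that the third slot $\l(gh|\gamma_{n-1})$ in the defect $\min$ is the one that collapses, since $gh=h$ is itself a single $\gamma_{n-1}$-word. This sidesteps the quadratic growth argument entirely. The paper's method has the virtue of being a template that also powers the $\beta_2$ case (Lemma~\ref{beta_2_weak_hoq_estimate}), where the analogue of your shortcut is not available in the same form; but for the $\gamma_n$ statement at hand, your argument is shorter and loses nothing. Your treatment of bullet~(2) coincides with the paper's.
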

\begin{proof}
Let $g$ be arbitrary, and let $k$ be a $\gamma_{n-2}$-word. There is a constant $C$ so that
for any $m$ we have $\l(g^mk^m(gk)^{-m}|\gamma_n) \le Cm$
and therefore by Lemma~\ref{weak_hoq_estimate}
we can estimate
$$|\phi(g^mk^m(gk)^{-m})| \le C'm$$
for some constant $C'$ depending only on $g$, $k$ and $D(\phi)$.

On the other hand, we can write 
\begin{align*}
g^mk^m(gk)^{-m} &= g^m k^m k^{-1} g^{-1} k^{-1} g^{-1} \cdots k^{-1} g^{-1} \\
&= g^m k^{m-1} g^{-1} k^{-1} \cdots k^{-1} g^{-1} \\
&= g^m k^{m-2} g^{-1} [g,k] g^{-1} k^{-1} \cdots k^{-1} g^{-1} \\
&= g^m k^{m-2} g^{-1} g^{-1} k^{-1} \cdots k^{-1} g^{-1} [g,k]^* \\
&= g^m k^{m-3} g^{-1} g^{-1} g^{-1} k^{-1} \cdots k^{-1} g^{-1} [g,k]^* [g,k]^* [g,k]^* \\
&= \text{product of } m(m-1)/2 \text{ conjugates of } [g,k]
\end{align*}
By abuse of notation, for any $r$ we denote a product of 
$r$ conjugates of $[g,h]$ by $[g,h]^{*r}$. Hence we have $g^mk^m(gk)^{-m} = [g,k]^{*m(m-1)/2}$. On the other hand,
for any $r$,
$$|\phi([g,k]^{*r}) - \phi([g,k]^{*r-1}) - \phi([g,k])| \le D(\phi)$$
by the defining property of an $\gamma_n$-hoq, since $\phi$ is a class function, and 
$\l(k|\gamma_{n-2})=1$ implies $\l([g,k]|\gamma_{n-1})\le 1$. By induction and the
triangle inequality we obtain
$$|\phi([g,k]^{*r}) - r\phi([g,k])| \le r D(\phi)$$
Hence we conclude
$$|\phi(g^mk^m(gk)^{-m}) - (m(m-1)/2) \phi([g,k])| \le (m(m-1)/2) D(\phi)$$
Since $|\phi(g^mk^m(gk)^{-m}|\le C'm$, dividing by $m(m-1)/2$ and taking $m \to \infty$ gives the
estimate
$$|\phi([g,k])| \le D(\phi)$$
Since $g$ was arbitrary, and $k$ is an arbitrary $\gamma_{n-2}$-word, we have proved bullet (1).
Bullet (2) follows exactly as in Lemma~\ref{weak_hoq_estimate}: if $g=h_1h_2\cdots h_m$ where
each $h_i$ is a $\gamma_{n-1}$-word, then by induction
$$\phi(g) \le \sum \phi(h_i) + (m-1)D(\phi) \le (2m-1)D(\phi)$$
This completes the proof of the lemma.
\end{proof}

From this easily follows our Comparison Theorem:

\begin{theorem}[Comparison theorem]\label{comparison_theorem}
For any perfect group $G$, for any $n\ge 2$ and for any $g\in G$ there is an inequality
$$2^{n-2}\scl(g) \ge \sl(g|\gamma_n) \ge \scl(g)$$
\end{theorem}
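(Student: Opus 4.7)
The plan is to prove the two bounds separately: the lower bound $\sl(g|\gamma_n) \ge \scl(g)$ follows immediately from the observation that every $\gamma_n$-word is a commutator when $n \ge 2$, while the upper bound $\sl(g|\gamma_n) \le 2^{n-2}\scl(g)$ will be proved by induction on $n$, combining the $\gamma_n$-Duality theorem with Lemma~\ref{one_better_lemma}.

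For the lower bound, observe that $\gamma_n = [x_1,\gamma_{n-1}]$ for $n \ge 2$, so every $\gamma_n$-word in $G$ is tautologically a commutator. Hence any expression of $g^k$ as a product of $m$ $\gamma_n$-words and their inverses is automatically an expression of $g^k$ as a product of $m$ commutators, so that $\cl(g^k) \le \l(g^k|\gamma_n)$ for all $k \ge 1$. Dividing by $k$ and passing to the limit yields $\scl(g) \le \sl(g|\gamma_n)$.

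For the upper bound I proceed by induction on $n$. The base case $n = 2$ is immediate, since $\sl(g|\gamma_2) = \scl(g)$ by definition and $2^{n-2} = 1$. For the inductive step, fix $n \ge 3$ and assume the estimate is known for $n-1$. The $\gamma_n$-Duality theorem (Theorem~\ref{duality_theorem}) gives
\[ \sl(g|\gamma_n) \le \sup_\phi \phi(g)/D(\phi), \]
where the supremum runs over weak $\gamma_n$-hoqs on $G$. Bullet~(2) of Lemma~\ref{one_better_lemma} tells us that for any such $\phi$ one has $\phi(g)/D(\phi) \le 2\sl(g|\gamma_{n-1})$. Combining these gives $\sl(g|\gamma_n) \le 2\sl(g|\gamma_{n-1})$, and the inductive hypothesis then yields $\sl(g|\gamma_n) \le 2 \cdot 2^{n-3}\scl(g) = 2^{n-2}\scl(g)$, as required.

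The serious work has already been carried out in Lemma~\ref{one_better_lemma}, whose key insight is that a weak $\gamma_n$-hoq controls not only stable $\gamma_n$-length but also the shorter stable $\gamma_{n-1}$-length; combined with duality this promotes the one-step estimate $\sl(\cdot|\gamma_n) \le 2\sl(\cdot|\gamma_{n-1})$, and its $(n-2)$-fold iteration produces the constant $2^{n-2}$. So the proof here is essentially just bookkeeping of this inductive estimate, with no obstacle beyond noting that the hypothesis that $G$ is perfect (needed to apply the $\gamma_n$-Duality theorem and Lemma~\ref{one_better_lemma}) is preserved throughout the induction.
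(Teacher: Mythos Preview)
Your proof is correct and follows essentially the same approach as the paper: the lower bound is the trivial observation that every $\gamma_n$-word is a commutator, and the upper bound combines the $\gamma_n$-Duality theorem with Lemma~\ref{one_better_lemma} to obtain $\sl(g|\gamma_n)\le 2\sl(g|\gamma_{n-1})$, then inducts down to $n=2$. Your write-up is slightly more explicit about the base case and induction, but the argument is identical.
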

\begin{proof}
From Lemma~\ref{one_better_lemma} and Theorem~\ref{duality_theorem} we estimate
$$2\sl(g|\gamma_{n-1}) \ge \sup_\phi \phi(g)/D(\phi) \ge \sl(g|\gamma_n)$$
where the supremum is taken over all weak $\gamma_n$-hoqs $\phi$. By induction on $n$, we get
$$2^{n-2}\scl(g) \ge \sl(g|\gamma_n)$$
The second inequality in the theorem is trivial, since every $\gamma_n$-word is a $\gamma_2$-word.
\end{proof}

It follows that weak $\gamma_n$-hoqs can be used to estimate $\scl$:

\begin{corollary}\label{scl_bound_from_weak_n_hoq}
Let $G$ be a perfect group, and $\phi$ a weak $\gamma_n$-hoq. Then for any $g\in G$ we have
$$\scl(g) \ge \phi(g)/2^{n-1}D(\phi)$$
\end{corollary}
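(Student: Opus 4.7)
The plan is to chain together the two main results just proved, namely the $\gamma_n$-Duality theorem (Theorem~\ref{duality_theorem}) and the Comparison theorem (Theorem~\ref{comparison_theorem}); the corollary is essentially immediate from these.

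First I would apply the lower bound half of Theorem~\ref{duality_theorem} to the given weak $\gamma_n$-hoq $\phi$. Since $\phi$ is one admissible test function in the supremum, this yields
\[
\sl(g|\gamma_n) \;\ge\; \frac{\phi(g)}{2D(\phi)}.
\]
Next I would invoke the left inequality of Theorem~\ref{comparison_theorem}, which gives
\[
2^{n-2}\scl(g) \;\ge\; \sl(g|\gamma_n).
\]
Combining these two inequalities and dividing through by $2^{n-2}$ produces
\[
\scl(g) \;\ge\; \frac{\phi(g)}{2^{n-1}D(\phi)},
\]
which is exactly the claim.

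There is essentially no obstacle here: both inputs have been established, and the only thing to check is the bookkeeping of the exponent, which comes out as $2^{n-2}\cdot 2 = 2^{n-1}$. One minor sanity check worth including is the $n=2$ case, where the estimate reduces to $\scl(g)\ge \phi(g)/2D(\phi)$, recovering the classical Bavard inequality for an ordinary homogeneous quasimorphism; this is consistent since a weak $\gamma_2$-hoq is precisely a homogeneous quasimorphism in the classical sense (as noted after Definition~\ref{weak_hoq_definition}).
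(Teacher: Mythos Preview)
Your proposal is correct and matches the paper's approach: the paper states the corollary immediately after the Comparison theorem with ``It follows that\ldots'' and gives no separate proof, so the intended argument is precisely the combination of the lower bound in Theorem~\ref{duality_theorem} (equivalently Lemma~\ref{weak_hoq_estimate} bullet~(3)) with the left inequality of Theorem~\ref{comparison_theorem}, exactly as you wrote.
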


Stable commutator length has many important applications to geometry, topology, dynamics, etc. In
general, estimating $\scl$ is very difficult; lower bounds are usually obtained from (ordinary homogeneous)
quasimorphisms (equivalently, weak $\gamma_n$-hoqs when $n=2$), and by now many interesting constructions
of such quasimorphisms are known; see e.g.\ \cite{Calegari_scl}, especially Chapters 3 and 5.
In view of Corollary~\ref{scl_bound_from_weak_n_hoq}, it becomes interesting to ask whether there
are any natural constructions of weak $\gamma_n$-hoqs with $n>2$, arising from geometry or dynamics.

\medskip

The statement of the Comparison Theorem is purely algebraic, and it is therefore natural to try to
find a purely algebraic proof, bypassing the construction of asymptotic cones, the use of
Hahn-Banach, etc. In fact, it is not too hard to translate the geometric argument into an algebraic
one, though the geometric argument has its own charm. The case of $\gamma_3$ is especially straightforward;
with the same amount of work, one proves a slightly stronger statement.

First, in any group $G$, let $\Gamma_3$ denote the set of words of the form $[x,y]$ where $y\in [G,G]$.
The $\Gamma_3$ words generate the subgroup $G_3$ (as do the $\gamma_3$ words, which are special examples of
$\Gamma_3$ words), but there is no uniform comparison between $\gamma_3$ length and $\Gamma_3$ length.
If $G$ is perfect, then $G=[G,G]=G_2$, so for such groups, $\Gamma_3$ words are nothing other than commutators,
and $\l(*|\Gamma_3) = \cl(*)$.

\begin{proposition}\label{algebraic_gamma_3}
In any group $G$, for any $g\in G_3$ there is an inequality
$$2 \sl(g|\Gamma_3) \ge \sl(g|\gamma_3) \ge \sl(g|\Gamma_3)$$
\end{proposition}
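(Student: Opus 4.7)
The right-hand inequality $\sl(g|\gamma_3)\ge \sl(g|\Gamma_3)$ is immediate: every $\gamma_3$-word $[x,[y,z]]$ is a $\Gamma_3$-word because its second argument $[y,z]$ lies in $[G,G]$. Hence $\l(\cdot|\Gamma_3)\le \l(\cdot|\gamma_3)$ on $G_3$, and passing to the limit gives the stable inequality.

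For the left-hand inequality $\sl(g|\gamma_3)\le 2\,\sl(g|\Gamma_3)$, the plan is to mimic the duality proof of the $\gamma_n$-Comparison theorem (Theorem~\ref{comparison_theorem}) for $n=3$, with the role of $\cl(\cdot)=\l(\cdot|\gamma_2)$ played throughout by $\l(\cdot|\Gamma_3)$, and to verify that the hypothesis that $G$ is perfect is not actually needed. By Theorem~\ref{nilpotent_implies_abelian} (which carries no perfection hypothesis) the cone $A_{\gamma_3}$ is a normed vector space, so Lemma~\ref{w_length_from_hoqs} gives the Hahn-Banach identity
\[
\sl(g|\gamma_3) \;=\; \sup_{\phi}\,\phi(\rho(g))/d(\phi),
\]
the supremum being over all Lipschitz linear functionals $\phi:A_{\gamma_3}\to\R$. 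Writing $\tilde\phi:=\phi\circ\rho$ on $G_3$, the main estimate I aim for is that if $g=h_1\cdots h_m$ is any factorization of $g\in G_3$ into $\Gamma_3$-words, then $|\tilde\phi(g)|\le (2m-1)d(\phi)$. Applied to $g^n$, divided by $n$, and passing to the limit as $n\to\infty$ this yields $\phi(\rho(g))/d(\phi)\le 2\,\sl(g|\Gamma_3)$; taking the supremum over $\phi$ then closes the argument.

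This main estimate follows, exactly as in the Comparison theorem, from two facts: (i) a quasimorphism bound $|\tilde\phi(xy)-\tilde\phi(x)-\tilde\phi(y)|\le d(\phi)\,\cl(y)$ for $x,y\in G_3$, and (ii) a commutator bound $|\tilde\phi(h)|\le d(\phi)$ for each $\Gamma_3$-word $h$. For (i), the computation in the proof of Lemma~\ref{hoq_is_weak_hoq} rewrites $x^my^m(xy)^{-m}$ as a conjugate of a product $\prod_{i=1}^{m-1}[x^i,y]^*$ and then uses bullet~(3) of Lemma~\ref{elementary_identities} to express each $[x^i,y]$ as $\cl(y)$ $\gamma_3$-words; the only role of perfection is to ensure $\cl(y)<\infty$, which here is automatic because $y\in G_3\subset[G,G]$. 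For (ii) one reruns the argument of Lemma~\ref{one_better_lemma}(1): the identity $a^my^m(ay)^{-m}=[a,y]^{*m(m-1)/2}$ combined with the $\gamma_3$-length bound $(m-1)\cl(y)=O(m)$ on the left-hand side (using $y\in[G,G]$ again) and an expansion of the right-hand side via (i) gives $|\tilde\phi([a,y])|\le d(\phi)$ after dividing by $m(m-1)/2$ and letting $m\to\infty$. Iterating (i) (with $\cl(h_i)=1$) and applying (ii) to each $h_i$ then yields $|\tilde\phi(g)|\le (2m-1)d(\phi)$ as required. The principal obstacle I expect is the class-function property of $\tilde\phi$ needed inside the proof of (ii); here one must show that $\tilde\phi$, though only defined on $G_3$, is invariant under $G$-conjugation of its arguments, and this should follow from the normality of $G_3$ together with the identity $xh^mx^{-1}=[x,h^m]\cdot h^m$ and the fact that $[x,h^m]\in G_3$ is a commutator, so that the quasimorphism estimate (i) applies and a homogeneity-in-$m$ argument of the type used in Lemma~\ref{weak_hoq_estimate}(1) goes through.
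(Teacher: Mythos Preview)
Your duality route is not the paper's, and the point you yourself flag as ``the principal obstacle'' --- the $G$-conjugation invariance of $\tilde\phi$ --- is a genuine gap that your sketch does not close. Carrying out your suggestion: from $xh^mx^{-1}=[x,h^m]\cdot h^m$ and $\cl([x,h^m])=1$, estimate~(i) gives
\[
\bigl|\,m\,\tilde\phi(xhx^{-1}) - \tilde\phi([x,h^m]) - m\,\tilde\phi(h)\,\bigr|\le d(\phi),
\]
so you need $\tilde\phi([x,h^m])=o(m)$. But $[x,h^m]$ is a product of $m$ conjugates $[x,h]^{h^j}$ (bullet~(3) of Lemma~\ref{elementary_identities}), each with $\cl=1$; applying (i) again, and using that conjugation by $h\in G_3$ \emph{does} act trivially on $A_{\gamma_3}$, you get only $\bigl|\tilde\phi([x,h^m])-m\,\tilde\phi([x,h])\bigr|\le (m-1)d(\phi)$. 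So the homogeneity argument yields $|\tilde\phi(xhx^{-1})-\tilde\phi(h)|\le d(\phi)+|\tilde\phi([x,h])|$, and you are reduced to showing $\tilde\phi([x,h])=0$ for every Lipschitz functional $\phi$, i.e.\ $\sl([x,h]\,|\,\gamma_3)=0$ for all $x\in G$ and $h\in G_3$. Nothing in your outline establishes this, and it is far from obvious: already in a free group, $[x,h]$ with $h$ a $\gamma_3$-generator is a copy of $\gamma_4$, and the Corollary~\ref{sl_inequality_lemma} lower bound $\sl(w|w)\ge\tfrac12$ makes vanishing of $\sl(\gamma_4|\gamma_3)$ at least nontrivial. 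Without exact conjugation invariance, step~(ii) collapses: the sum $\sum_j\tilde\phi([a,y]^{*j})$ is no longer $N\tilde\phi([a,y])$, and you cannot extract the bound $|\tilde\phi([a,y])|\le d(\phi)$ needed for the constant~$2$.

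The paper bypasses the cone entirely and gives a short direct argument. From $g^k=[a_1,b_1]\cdots[a_m,b_m]$ with $b_i\in[G,G]$ and $m/k$ close to $\sl(g|\Gamma_3)$, one raises to the $2^n$-th power. Two elementary identities do all the work: any commutator $[\,*,[a_i,b_i]\,]$ arising from reordering is itself a single $\gamma_3$-word, and $[a,b]^2=[a^2,b]\cdot\gamma_3^{\seq(1)}$ (from bullet~(4)). Iterating these rewrites $g^{2^nk}$ as $[a_1^{2^n},b_1]\cdots[a_m^{2^n},b_m]$ times $O(m\cdot 2^n)$ $\gamma_3$-words; the leading block has $\gamma_3$-length at most $\sum_i\cl(b_i)$, independent of $n$. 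Dividing by $2^nk$ and letting $n\to\infty$ gives $\sl(g|\gamma_3)\le 2m/k$, hence $\le 2\,\sl(g|\Gamma_3)$. No quasimorphisms, no Hahn--Banach, no class-function issue.
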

\begin{proof}
Recall (from Notation~\ref{seq_notation}) the notation 
$\gamma_3^{\seq(k)}$ for an arbitrary product of $\gamma_3$-words.

Express $g^k$ as a product of commutators
$$g^k = [a_1,b_1][a_2,b_2]\cdots[a_m,b_m]$$
where the $b_i$ are all in $[G,G]$, and
where $m/k$ is as close to $\sl(g|\Gamma_3)$ as we like. Then we have
$$g^{2^nk} = ([a_1,b_1]\cdots[a_m,b_m])^{2^n} = [a_1,b_1]^{2^n}\cdots[a_m,b_m]^{2^n} \gamma_3^\seq(m2^n)$$ 
Now, for each $i$ we have $[a_i,b_i]^2 = [a_i^2,b_i] \gamma_3^\seq(1)$
and therefore
$$[a_i,b_i]^{2^n} = [a_i^2,b_i]^{2^{n-1}}\gamma_3^\seq(2^{n-1}) = [a_i^{2^n},b_i]\gamma_3^\seq(2^n-1)$$
and therefore we can write
$$g^{2^nk} = [a_1^{2^n},b_1][a_2^{2^n},b_2]\cdots[a_m^{2^n},b_m]\gamma_3^\seq(m2^n-m)$$
We can estimate $\l([a_1^{2^n},b_1][a_2^{2^n},b_2]\cdots[a_m^{2^n},b_m]|\gamma_3) \le \sum_i \l(b_i|\gamma_2)$,
which is a constant independent of $n$. Since $m/k$ is as close as we like to $\scl(g)$, 
and $n$ is arbitrary, we deduce $\sl(g|\gamma_3) \le 2\scl(g)$ as claimed.
\end{proof}

We do not know whether there is an {\it a priori} comparison between $\sl(*|\gamma_3)$ and $\scl(*)$ on
elements of $G_3$ for an arbitrary group. 

\subsection{$\beta_2$-quasimorphisms}

By Theorem~\ref{vanishing_scl_beta_2_abelian}, if $G$ is a perfect group in which $\scl$ vanishes identically,
then $A_{\beta_2}$ is a vector space, and $\sl(g|\beta_2) = \sup_\phi \phi(g)/d(\phi)$ where the
supremum is taken over all $\beta_2$-hoqs $\phi$. We would like to obtain a ``weak'' characterization
of $\beta_2$-hoqs on such groups, analogous to the definition of weak $\gamma_n$-hoqs.

\begin{definition}
A {\em weak} $\beta_2$-hoq is a function $\phi:G \to \R$ for which there is a least non-negative
real number $D(\phi)$ (called the {\em defect}) satisfying the following properties:
\begin{enumerate}
\item{(homogeneity) for any $g\in G$ and any $n \in \Z$ there is an equality $\phi(g^n) = n\phi(g)$}
\item{(quasimorphism) for any $g,h \in G$ there is an inequality
$$|\phi(g) + \phi(h) - \phi(gh)| \le D(\phi)\min(\l(g|\beta_2),\l(h|\beta_2),\l(gh|\beta_2))$$}
\end{enumerate}
\end{definition}

The following is the analogue of Lemma~\ref{hoq_is_weak_hoq}:
\begin{lemma}\label{beta_2_hoq_is_weak_hoq}
If $G$ is perfect and $\scl$ vanishes identically, every $\beta_2$-hoq is a weak $\beta_2$-hoq 
with $D(\phi)\le 2d(\phi)$.
\end{lemma}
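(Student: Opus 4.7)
The plan is to imitate the proof of Lemma~\ref{hoq_is_weak_hoq} in the $\beta_2$ setting, using the fact that $A_{\beta_2}$ is abelian by Theorem~\ref{vanishing_scl_beta_2_abelian}. Thus any $\beta_2$-hoq $\phi$ is a genuine $d(\phi)$-Lipschitz homomorphism on a normed abelian group, and the induced function $\phi:G\to\R$ (via $\phi(g):=\phi(\rho(g))$) is automatically a class function, since inner automorphisms act trivially on the abelian $A_{\beta_2}$.

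First I would start from the Lipschitz estimate
$$|\phi(g) + \phi(h) - \phi(gh)| = |\phi(\rho(g)\cdot\rho(h)\cdot\rho(gh)^{-1})| \le d(\phi)\cdot\lim_\omega \tfrac 1 n \l(g^nh^n(gh)^{-n}|\beta_2),$$
and then use the identity from the remark following Lemma~\ref{continuity_in_lines} to rewrite $g^nh^n(gh)^{-n}$ as a conjugate of $\prod_{i=1}^{n-1}[g^i,h]^*$. This gives $\l(g^nh^n(gh)^{-n}|\beta_2)\le\sum_{i=1}^{n-1}\l([g^i,h]|\beta_2)$, reducing the problem to a uniform estimate on each $\l([g^i,h]|\beta_2)$.

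The heart of the argument is the following calculation: when $h=[u,v]$ is a $\beta_2$-word (so $u$ and $v$ are commutators), the identity
$$[g^i,h] = [u,v]^{g^i}\cdot[u,v]^{-1} = [u^{g^i},v^{g^i}]\cdot[v,u]$$
exhibits $[g^i,h]$ as a product of exactly \emph{two} $\beta_2$-words, since $u^{g^i}, v^{g^i}, u, v$ are all commutators. Hence $\l([g^i,h]|\beta_2)\le 2$ for a single $\beta_2$-word $h$; for general $h=h_1\cdots h_m$ with $m=\l(h|\beta_2)$, bullet~(3) of Lemma~\ref{elementary_identities} expands $[g^i,h]=\prod_j [g^i,h_j]^*$, yielding $\l([g^i,h]|\beta_2)\le 2m$. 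Summing over $i$ and passing to the ultralimit gives
$$|\phi(g)+\phi(h)-\phi(gh)| \le 2d(\phi)\,\l(h|\beta_2).$$
The analogous bounds in terms of $\l(g|\beta_2)$ and $\l(gh|\beta_2)$ then follow by swapping $g\leftrightarrow h$ (using $\phi(gh)=\phi(hg)$) and by applying the established bound to the pair $(gh, h^{-1})$.

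The main obstacle is the commutator identity in the third paragraph: unlike in the $\gamma_n$ setting, where $[g,\gamma_{n-1}]$ is itself a $\gamma_n$-word and directly yields the bound $d(\phi)$, here $[g,\beta_2]$ is not a $\beta_2$-word. It only decomposes into \emph{two} $\beta_2$-words via $[g,[u,v]]=[u^g,v^g][v,u]$, using crucially that conjugates of commutators remain commutators. This doubling is precisely what forces the factor of $2$ in the conclusion $D(\phi)\le 2d(\phi)$.
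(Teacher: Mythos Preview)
Your proposal is correct and follows the same overall architecture as the paper: bound $|\phi(g)+\phi(h)-\phi(gh)|$ by $d(\phi)$ times the stable $\beta_2$-length of $g^nh^n(gh)^{-n}$, rewrite the latter as a product of conjugates of $[g^i,h]$, and then show $\l([g^i,h]|\beta_2)\le 2\,\l(h|\beta_2)$. The symmetry reduction at the end is also handled the same way (the paper simply says ``by symmetry'').

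The one genuine difference is in how you obtain the bound $\l([g^i,[u,v]]|\beta_2)\le 2$ when $u,v$ are commutators. The paper invokes the Hall--Witt identity (Lemma~\ref{elementary_identities}, bullet~(5)) to write
\[
[g^j,[a_i,b_i]] = [a_i^*,[b_i^*,(g^j)^*]]\,[b_i^*,[a_i^*,(g^j)^*]],
\]
each factor being a $\beta_2$-word because $a_i,b_i$ are commutators. Your route is more elementary: you simply expand $[g^i,[u,v]] = [u,v]^{g^i}[u,v]^{-1} = [u^{g^i},v^{g^i}][v,u]$ and observe that conjugates of commutators are commutators. This avoids Hall--Witt entirely and is arguably the more natural identity here; the paper's choice has the side effect of producing factors of the specific shape $[\text{commutator},[\text{commutator},*]]$, but that extra structure is not used in the present lemma. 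Either way the factor~$2$ appears for the same reason, and the rest of the argument is identical.
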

\begin{proof}
As in Lemma~\ref{hoq_is_weak_hoq}, we must estimate the $\beta_2$-length of $g^mh^m(gh)^{-m}$.
As before, we have an identity
$$g^mh^m(gh)^{-m} = [g,h]^*[g^2,h]^*\cdots[g^{m-1},h]^*$$
We can write $h=[a_1,b_1]\cdots[a_r,b_r]$ where each $a_i,b_i$ is a commutator, and
$r=\l(h|\beta_2)$. Then
$$[g^j,h] = [g^j,[a_1,b_1]]^*[g^j,[a_2,b_2]]^*\cdots[g^j,[a_r,b_r]]^*$$
By the Hall-Witt identity (i.e.\ bullet (5) from Lemma~\ref{elementary_identities}) we can
write
$$[g^j,[a_i,b_i]] = [a_i^*,[b_i^*,(g^j)^*]][b_i^*,[a_i^*,(g^j)^*]]$$
and since $a_i,b_i$ are both commutators by hypothesis, the right hand side is a product of
two $\beta_2$-words. Hence
$$\l(g^mh^m(gh)^{-m}|\beta_2) \le 2m\l(h|\beta_2)$$
By symmetry, the lemma follows.
\end{proof}

The following is the analogue of Lemma~\ref{weak_hoq_estimate}, though the proof is
more circuitous, and has something in common with that of Lemma~\ref{one_better_lemma}: 
\begin{lemma}\label{beta_2_weak_hoq_estimate}
Suppose $G$ is perfect, and $\scl$ vanishes identically in $G$. 
Then any weak $\beta_2$-hoq $\phi:G\to \R$ satisfies the following properties:
\begin{enumerate}
\item{$\phi$ is a class function;}
\item{if $h$ is a $\beta_2$-word, then $\phi(h)\le D(\phi)$;}
\item{for any $g\in G$ there is an estimate
$$\phi(g) \le (2\l(g|\beta_2)-1)D(\phi)$$ and consequently
$$\sl(g|\beta_2) \ge \phi(g)/2D(\phi)$$}
\end{enumerate}
\end{lemma}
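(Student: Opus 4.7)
The plan is to mirror the proof of Lemma~\ref{weak_hoq_estimate}, substituting $\beta_2$ for $\gamma_n$ throughout. A preliminary observation is that for $G$ perfect one has $G_{\beta_2}=G$: by bullets (3)--(4) of Lemma~\ref{elementary_identities} the subgroup generated by $\beta_2$-words is precisely $[[G,G],[G,G]]$, and perfectness gives $[G,G]=G$, hence $[[G,G],[G,G]]=G$. This guarantees that $\l(\cdot|\beta_2)$ is everywhere finite, so the $\min$ in the defect bound is never vacuous --- the crucial point that lets the usual tricks run.

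For (1), I would fix $g,h\in G$ and $n\in\Z$ and apply the defect inequality twice, once to $hg^nh^{-1}=h\cdot(g^nh^{-1})$ and once to $g^n\cdot h^{-1}$, each time taking the $\min$ to be $\l(h|\beta_2)$. This gives
$$|\phi(hg^nh^{-1})-\phi(h)-\phi(g^n)-\phi(h^{-1})|\le 2D(\phi)\l(h|\beta_2),$$
a bound independent of $n$. By homogeneity the left-hand side equals $n\,|\phi(hgh^{-1})-\phi(g)|$ (the $\phi(h)$ and $\phi(h^{-1})$ cancel), and dividing by $n$ and sending $n\to\infty$ yields $\phi(hgh^{-1})=\phi(g)$.

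For (2), let $h=[u,v]$ with $u=[x,y]$ and $v=[z,w]$, so $h$ itself is a $\beta_2$-word and $\l(h|\beta_2)\le 1$. Applying the defect inequality to the decomposition $h=(uvu^{-1})\cdot v^{-1}$ and choosing the $\min$ to be $\l(h|\beta_2)$ gives
$$|\phi(h)-\phi(uvu^{-1})-\phi(v^{-1})|\le D(\phi).$$
Now (1) gives $\phi(uvu^{-1})=\phi(v)$, and homogeneity gives $\phi(v^{-1})=-\phi(v)$, so these cancel and $|\phi(h)|\le D(\phi)$. For (3), I would write $g=h_1 h_2\cdots h_m$ with $m=\l(g|\beta_2)$ and each $h_i$ a $\beta_2$-word, then iterate the defect inequality (with $\min\le 1$ at each step) together with (2) to obtain $\phi(g)\le\sum_i \phi(h_i)+(m-1)D(\phi)\le (2m-1)D(\phi)$. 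Applying this to $g^n$, using $\phi(g^n)=n\phi(g)$, dividing by $n$, and passing to the limit gives $\sl(g|\beta_2)\ge \phi(g)/2D(\phi)$.

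The one non-routine point is really the preliminary identification $G=G_{\beta_2}$ needed for the class function argument in (1); once this is secured, the rest is a direct transcription of the $\gamma_n$ argument, the key replacement being that the inner entries of a $\beta_2$-word are commutators (rather than $\gamma_{n-1}$-words) of length $\le 1$ in the relevant word-metric. The hypothesis that $\scl$ vanishes identically does not seem to enter the proof of this lemma explicitly --- it is inherited from the ambient setting of Theorem~\ref{vanishing_scl_beta_2_abelian}, in which $A_{\beta_2}$ is a normed vector space and the duality machinery of the previous subsection has a genuine analogue.
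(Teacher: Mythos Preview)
Your proof is correct, and for part~(2) it is genuinely simpler than the paper's. The paper does not use the product term $\l(gh|\beta_2)$ in the defect minimum; instead it first shows $|\phi([g,h])|\le D(\phi)$ only under the side condition $\l(g|\beta_2)=1$, then bootstraps via the auxiliary word $\alpha=[[u,v],[[x,y],[z,w]]]$ (which is simultaneously a $\beta_2$-word and a commutator with a $\beta_2$-word) to get a linear bound $|\phi(\cdot)|\le 2D(\phi)\l(\cdot|\alpha)$, and finally runs the asymptotic trick $g^mh^m(gh)^{-m}=\text{product of }m(m-1)/2\text{ conjugates of }[g,h]$ from Lemma~\ref{one_better_lemma} to squeeze out $|\phi([g,h])|\le D(\phi)$ for arbitrary commutators $g,h$. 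Your route bypasses all of this by observing that when $h=[u,v]$ is itself a $\beta_2$-word, the term $\l(h|\beta_2)\le 1$ already sits in the minimum, so the single application of the defect inequality to $h=(uvu^{-1})\cdot v^{-1}$ suffices. This works precisely because the weak $\beta_2$-hoq definition puts $\beta_2$-lengths (not $\beta_1$-lengths) in the minimum --- contrast the $\gamma_n$ case, where the minimum is over $\gamma_{n-1}$-lengths and one cannot use $\l(h|\gamma_{n-1})$ for a $\gamma_n$-word $h$ so directly. Your remark that the vanishing-$\scl$ hypothesis is not used in the lemma itself is also correct, and indeed the paper's own proof does not invoke it either.
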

\begin{proof}
To see that $\phi$ is a class function, observe as in the proof of Lemma~\ref{weak_hoq_estimate}
that for any $g,h$ and any $n$,
$$|\phi(hg^nh^{-1}) - \phi(h) - \phi(g^n) - \phi(h^{-1})| \le 2D(\phi)\l(h|\beta_2)$$
By homogeneity, and the fact that the right hand side is constant independent of $n$, we see
that $\phi(hgh^{-1})=\phi(g)$; i.e.\ $\phi$ is a class function.

Secondly, observe that if $\l(g|\beta_2)=1$ then $\phi([g,h])\le D(\phi)$. This
is because
$$|\phi(ghg^{-1}h^{-1}) - \phi(g) - \phi(hg^{-1}h^{-1})| \le D(\phi)\l(g|\beta_2) = D(\phi)$$
and because $\phi$ is a homogeneous class function. If we let $\alpha$ denote the word
$[[u,v],[[x,y],[z,w]]]$ then $\alpha$ is both a $\beta_2$-word, and the commutator of something
with a $\beta_2$ word. Thus by induction,
$|\phi(g)| \le 2D(\phi)\l(g|\alpha)$. Since $G$ is perfect, every element has a
finite $\alpha$-length, so for any $g$ and $h$
and any integer $m$ we have $|\phi(g^mh^m(gh)^{-m})| \le Cm$ for some constant $C$ depending
only on $g,h$ and $D(\phi)$.

On the other hand, as in the proof of Lemma~\ref{one_better_lemma}, we can write
$g^mh^m(gh)^{-m}$ a a product of $m(m-1)/2$ conjugates of $[g,h]$. If $g$ and $h$ are
both commutators, $[g,h]$ is a $\beta_2$-word, and therefore
$$|\phi(g^mh^m(gh)^{-m}) - (m(m-1)/2) \phi([g,h])| \le (m(m-1)/2) D(\phi)$$
and consequently $|\phi([g,h])| \le D(\phi)$. Since $g$ and $h$ are arbitrary commutators,
this shows that $|\phi(g)|\le D(\phi)$ for any $\beta_2$-word $g$, proving the second
claim.

The third claim follows immediately from this, as in the proof of Lemma~\ref{weak_hoq_estimate}. 
\end{proof}

\begin{theorem}[$\beta_2$-Duality theorem]\label{beta_2_duality_theorem}
For any perfect group $G$ in which $\scl$ vanishes identically, and for
any $g \in G$ there is an inequality
$$\sup_\phi 2\phi(g)/D(\phi) \ge \sl(g|\beta_2) \ge \sup_\phi \phi(g)/2D(\phi)$$
where the supremum is taken over all weak $\beta_2$-hoqs.
\end{theorem}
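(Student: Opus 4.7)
The plan is to mimic the proof of the $\gamma_n$-Duality theorem, since all three ingredients are already in place for $\beta_2$: Theorem~\ref{vanishing_scl_beta_2_abelian} guarantees that $A_{\beta_2}$ is a normed vector space under our hypothesis; Lemma~\ref{beta_2_hoq_is_weak_hoq} converts strict $\beta_2$-hoqs on $A_{\beta_2}$ into weak $\beta_2$-hoqs with only a factor-$2$ loss in defect; and Lemma~\ref{beta_2_weak_hoq_estimate} supplies the lower estimate $\sl(g|\beta_2)\ge \phi(g)/2D(\phi)$ for weak $\beta_2$-hoqs.

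First I would combine Theorem~\ref{vanishing_scl_beta_2_abelian} with the Hahn-Banach theorem to obtain the exact analogue of Lemma~\ref{w_length_from_hoqs}: because $A_{\beta_2}$ is a normed vector space and $\sl(g|\beta_2)=\|\rho(g)\|$, we get
$$\sl(g|\beta_2)=\sup_\phi \phi(\rho(g))/d(\phi),$$
where the supremum runs over all $\beta_2$-hoqs (bounded linear functionals on $A_{\beta_2}$).

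For the upper bound, I would compose each such $\phi$ with the natural map $g\mapsto \rho(g)$ from $G$ to $A_{\beta_2}$. By Lemma~\ref{beta_2_hoq_is_weak_hoq} the resulting function on $G$ is a weak $\beta_2$-hoq with $D(\phi)\le 2d(\phi)$, hence
$$\phi(\rho(g))/d(\phi)\le 2\phi(g)/D(\phi).$$
Taking the supremum over strict $\beta_2$-hoqs on the left and enlarging the supremum on the right to range over all weak $\beta_2$-hoqs yields the desired upper bound. The lower bound is then immediate from bullet (3) of Lemma~\ref{beta_2_weak_hoq_estimate}: taking the supremum in $\sl(g|\beta_2)\ge \phi(g)/2D(\phi)$ over all weak $\beta_2$-hoqs gives the required inequality.

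The only (essentially cosmetic) obstacle is keeping straight the identification between strict $\beta_2$-hoqs on $A_{\beta_2}$ and their pullbacks as real-valued functions on $G$, and checking that the factor-of-$2$ gap between $d(\phi)$ and $D(\phi)$ in Lemma~\ref{beta_2_hoq_is_weak_hoq} is where the factor $2$ in the upper bound of the theorem comes from (as opposed to the $\gamma_n$ case, where $D(\phi)\le d(\phi)$ and no such factor appears). Since the heavy lifting has already been carried out in the three preparatory lemmas, the theorem follows with no additional work beyond assembling them in the order above.
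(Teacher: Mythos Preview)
Your proposal is correct and follows essentially the same approach as the paper: the paper's proof is simply a one-line appeal to Lemma~\ref{beta_2_hoq_is_weak_hoq} for the upper bound and Lemma~\ref{beta_2_weak_hoq_estimate} for the lower bound, with the Hahn--Banach step (via Theorem~\ref{vanishing_scl_beta_2_abelian}) implicit exactly as you spelled out. Your identification of the factor-of-$2$ in the upper bound as arising from the inequality $D(\phi)\le 2d(\phi)$ in Lemma~\ref{beta_2_hoq_is_weak_hoq} is precisely how the paper intends it.
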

\begin{proof}
The upper bound follows from Lemma~\ref{beta_2_hoq_is_weak_hoq}, 
and the lower bound from  Lemma~\ref{beta_2_weak_hoq_estimate}.
\end{proof}

We conclude this section by making a curious observation on the
relation between $\sl(*|\beta_2)$ and $\l(*|\gamma_3)$, under the hypothesis that
$\scl$ vanishes identically.

\begin{proposition}\label{oddball_prop}
Suppose $G$ is perfect, and $\scl$ vanishes identically. If $\l(g|\gamma_3)=1$ then
$\sl(g|\beta_2)\le 1$.
\end{proposition}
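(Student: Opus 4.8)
The plan is to unravel the hypothesis and then run a ``Culler-type'' argument in the spirit of Proposition~\ref{[x,w]_inequality}, upgrading the resulting $\gamma_3$-words to $\beta_2$-words using perfectness and the vanishing of $\scl$. Since $\l(g|\gamma_3)=1$, by definition $g=[a,c_0]$ for some $a\in G$ and some commutator $c_0=[b,c]$. It suffices to produce, for each $m$, an expression of $g^{2m+1}$ as a product of $(2+o(1))m$ $\beta_2$-words, since then $\sl(g|\beta_2)=\lim_{m}\l(g^{2m+1}|\beta_2)/(2m+1)\le 1$.

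First I would apply the generalized Culler identity (Lemma~\ref{generalized_Culler_identity}) with $x\mapsto a$, $y\mapsto c_0$, obtaining
$$g^{2m+1}=[a,c_0]^{2m+1}=[p_0,c_0^{m+1}]^{*}\cdot\prod_{i=1}^{m}[p_i,c_0]^{*}$$
for suitable $p_0,\dots,p_m\in G$ (words in $a$ and $c_0$). Every factor is a commutator whose second entry is a conjugate of a power of the commutator $c_0$, hence a $\gamma_3$-word; the point is to rewrite each factor as a bounded --- on average at most $2$ --- number of $\beta_2$-words, plus a sublinear error. Since $G$ is perfect, write each $p_i$ as a product of commutators; bullets (3) and (4) of Lemma~\ref{elementary_identities} then expand $[p_i,c_0]$ as a product of $\l(p_i|\gamma_2)$ terms of the form $[\,[*,*],\;c_0^{*}\,]$, i.e.\ $\beta_2$-words (commutators of commutators), and likewise $[p_0,c_0^{m+1}]$ becomes a product of at most $\l(p_0|\gamma_2)\cdot\l(c_0^{m+1}|\gamma_2)$ $\beta_2$-words. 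Here the vanishing of $\scl$ enters twice: $\scl(c_0)=0$ gives $\l(c_0^{m+1}|\gamma_2)=\cl(c_0^{m+1})=o(m)$, and $\scl(a)=0$ (together with the fact that $\cl(a^{N})=o(N)$) must be used to keep the commutator lengths of the auxiliary words $p_i$ under control whenever they involve high powers of $a$, so that $\sum_{i=1}^{m}\l(p_i|\gamma_2)\le(2+o(1))m$ and $\l(p_0|\gamma_2)\cdot o(m)=o(m)$.

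The hard part will be exactly this last bookkeeping: the $p_i$ produced by the Culler identity are a priori long words in $a$ and $c_0$, and one has to exhibit them --- by choosing the dual arcs $\alpha_i$ in the proof of Lemma~\ref{generalized_Culler_identity} so that their images are conjugates of $a$ (or powers of $a$), and by absorbing high powers of $a$ into factors of the shape $[a^{N},c_0]$, which are cheap since $[a^N,c_0]=\prod^{\cl(a^N)}[*,c_0]^{*}$ costs only $o(N)$ $\beta_2$-words --- so that the total $\gamma_2$-weight of all the $p_i$ does not exceed $(2+o(1))m$. Granting this, the displayed factorization writes $g^{2m+1}$ as a product of $(2+o(1))m$ $\beta_2$-words, and letting $m\to\infty$ yields $\sl(g|\beta_2)\le 1$. (An alternative route is to dualize: prove that every weak $\beta_2$-hoq $\phi$ satisfies $\phi(g)\le D(\phi)/2$ --- a ``one level better'' estimate analogous to Lemma~\ref{one_better_lemma}, obtained by feeding a two-sided count of $a^{m}c_0^{m}(ac_0)^{-m}$ against $[a,c_0]^{*m(m-1)/2}$ --- and then invoke the $\beta_2$-Duality theorem~\ref{beta_2_duality_theorem}; but this meets the same obstacle, namely controlling the correction relating $[a,c_0]^{N}$ to $[a^{N},c_0]$.)
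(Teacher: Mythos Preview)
Your approach has a genuine gap at precisely the point you flag as ``the hard part.'' You need $\sum_{i=1}^m \cl(p_i)\le (2+o(1))m$, but the words $p_i$ produced by Lemma~\ref{generalized_Culler_identity} are a priori arbitrary words in $a$ and $c_0$, and you offer no argument that they can be chosen with commutator length averaging at most~$2$. Even granting that the dual arcs $\alpha_i$ can be taken so that each $p_i$ is a conjugate of a bounded power of $a$, this only yields $\cl(p_i)\le\cl(a)$ or $\cl(a^2)$ --- a fixed constant depending on $a$, not $2$ --- and since the $p_i$ sit inside separate, differently conjugated factors $[p_i,c_0]^*$, there is no evident way to ``absorb'' them into a single $[a^N,c_0]$ where the vanishing of $\scl(a)$ would help. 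Your alternative dual route would require $\phi(g)\le D(\phi)/2$ for every weak $\beta_2$-hoq, a factor of~$2$ stronger than what Lemma~\ref{beta_2_weak_hoq_estimate} gives even for genuine $\beta_2$-words, and again you supply no mechanism for this improvement. So ``granting this'' is granting essentially the whole proposition.

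The paper bypasses the Culler machinery entirely with a one-line telescoping. From bullet~(3) of Lemma~\ref{elementary_identities} one has $[a,b][a,c]=[a,bc]\cdot[[a,c],b]^*$; with $a=x$, $b=c_0=[y,z]$, $c=c_0^{n}$ this reads
$$[x,c_0]\,[x,c_0^{n}]=[x,c_0^{n+1}]\cdot[[x,c_0^{n}],c_0]^*,$$
and the correction term is already a $\beta_2$-word (a commutator of the commutators $[x,c_0^{n}]$ and $[y,z]$). Iterating from $n=1$ gives $g^n=[x,c_0]^n=[x,c_0^{n}]\cdot(\text{product of }(n-1)\text{ }\beta_2\text{-words})$. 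Now $\scl(c_0)=0$ gives $\cl(c_0^{n})=o(n)$, while perfectness gives $\cl(x)<\infty$; expanding $[x,c_0^{n}]$ via bullets (3)--(4) therefore costs only $\cl(x)\cdot o(n)=o(n)$ further $\beta_2$-words. Hence $\l(g^n|\beta_2)\le (n-1)+o(n)$ and $\sl(g|\beta_2)\le 1$. The essential difference from your plan is that the paper compares $g^n$ to the \emph{single} commutator $[x,c_0^n]$, whose left entry $x$ is fixed once and for all, rather than to $m$ commutators with uncontrolled left entries $p_i$.
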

\begin{proof}
Since for any $a,b,c$ we have $[a,b][a,c]=[a,bc][[a,c],b]^*$ by bullet (3) of
Lemma~\ref{elementary_identities}, it follows that
$$[x,[y,z]][x,[y,z]^n] = [x,[y,z]^{n+1}][[x,[y,z]^n],[y,z]]^*$$
for any $n$ and any $x,y,z$. Since $[[x,[y,z]^n],[y,z]]$ is a $\beta_2$-word, it follows by induction
that $[x,[y,z]]^n$ can be written as a product of $(n-1)$ $\beta_2$-words with $[x,[y,z]^n]$.
If $\scl$ vanishes identically, $[y,z]^n$ can be written as a product of $o(n)$ commutators.
If $G$ is perfect, $x$ is a product of a finite number of commutators. Hence the $\beta_2$-length
of $[x,[y,z]^n]$ is $o(n)$, and therefore $\sl([x,[y,z]]|\beta_2) \le 1$ as claimed.
\end{proof}

It seems hard to generalize Proposition~\ref{oddball_prop} to estimate $\sl(*|\beta_2)$ from
$\l(*|\gamma_3)$.

\subsection{Perfectness and virtual perfectness}

Throughout this section we have usually made the assumption that $G=G_W$. When $W=\gamma_n$
for some $n$, this is equivalent to the statement that $G$ is perfect. However it is evident 
that in most arguments it is sufficient to replace any given element $g$ with a (fixed) power
$g^m$. In particular, the theorems in this section remain true under the weaker hypothesis
that for every $g\in G$ there is a positive integer $m$ such that $g^m \in G_W$; equivalently,
the quotient $G/G_W$ is torsion.
For general $W$ this is implied by, but weaker than, the condition that $G_W$ has finite index in $G$,
even if $G$ is finitely generated. But for $W=\gamma_n$, the two conditions are equivalent when
$G$ is finitely generated.

\begin{lemma}
For any $n$, the quotient $G/G_n$ is torsion if and only if $G/G_2$ is torsion.
\end{lemma}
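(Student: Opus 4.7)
The direction ``$G/G_n$ torsion implies $G/G_2$ torsion'' is immediate, since $G_n\subseteq G_2$ makes $G/G_2$ a quotient of $G/G_n$. For the converse, my plan is to prove by induction on $k$ that each successive lower central series quotient $G_k/G_{k+1}$ is a torsion abelian group, and then to promote this to a statement about $G/G_n$ itself using the finite filtration $G/G_n\supseteq G_2/G_n\supseteq\cdots\supseteq G_{n-1}/G_n\supseteq 1$.

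The key tool will be the commutator pairing, which I claim induces a surjective $\Z$-bilinear map
\[
G/G_2 \otimes_{\Z} G_{k-1}/G_k \;\longrightarrow\; G_k/G_{k+1}, \qquad \overline{x}\otimes\overline{y}\mapsto\overline{[x,y]},
\]
for each $k\ge 2$. Surjectivity is immediate from the identity $G_k=[G,G_{k-1}]$. The factorization through the left factor uses $[G_2,G_{k-1}]\subseteq G_{k+1}$, and through the right factor uses $[G,G_k]=G_{k+1}$. Bilinearity is where the real work sits, and it reduces to bullets (3) and (4) of Lemma~\ref{elementary_identities}: the error terms in the expansions of $[x_1x_2,y]$ and $[x,y_1y_2]$ are double commutators lying in $[G_k,G_{k-1}]\subseteq G_{2k-1}\subseteq G_{k+1}$, which is exactly where the hypothesis $k\ge 2$ is used.

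Once this surjection is in place, the induction runs easily: the base case $k=1$ is just the hypothesis on $G/G_2$; in the inductive step, if $G_{k-1}/G_k$ is torsion then the tensor product $G/G_2\otimes_{\Z} G_{k-1}/G_k$ is torsion (tensoring with a torsion abelian group produces a torsion group, since each simple tensor is killed by the order of either of its factors), hence so is its image $G_k/G_{k+1}$. To finish, for any $g\in G$ I iteratively choose positive integers $m_1,m_2,\ldots,m_{n-1}$ so that $g^{m_1\cdots m_k}\in G_{k+1}$ at each stage, which is possible by torsion of $G_k/G_{k+1}$. After $n-1$ steps $g^{m_1m_2\cdots m_{n-1}}\in G_n$, proving $G/G_n$ is torsion.

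The main obstacle is really just the bookkeeping in the bilinearity check, together with verifying that the various nested error commutators actually land in $G_{k+1}$; modulo that, the argument is essentially a torsion-hypothesis version of the Magnus--Witt construction of the graded Lie algebra associated to the lower central series.
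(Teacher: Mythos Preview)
Your proof is correct and is essentially the same argument as the paper's, recast in the language of the associated graded Lie algebra: the paper writes $g\in G_k$ as a product of commutators $[a_i,b_i]$ with $b_i\in G_{k-1}$, picks $m$ with $b_i^m\in G_k$, and computes $g^m\equiv\prod[a_i,b_i^m]\equiv 0\pmod{G_{k+1}}$ using exactly the bilinearity you set up. Your tensor-product formulation is a clean abstraction of this computation, but the inductive scheme and the use of bullets (3)--(4) of Lemma~\ref{elementary_identities} are identical.
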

\begin{proof}
One direction is obvious. We prove the other direction. Suppose $G/G_2$ is torsion, and
suppose by induction we have shown $G/G_k$ is torsion for some $k\ge 2$. Let $g \in G_k$,
and write $g=[a_1,b_1][a_2,b_2]\cdots[a_r,b_r]$ where each $b_r \in G_{k-1}$. Let $m$ be
such that $b_i^m \in G_k$ for all $i$. Then 
\begin{align*}
g^m &= [a_1,b_1]^m\cdots[a_r,b_r]^m \mod G_{k+1} \\
&= [a_1,b_1^m]\cdots[a_r,b_r^m]\mod G_{k+1} \\
&= 0 \mod G_{k+1}
\end{align*}
By induction, the lemma is proved.
\end{proof}

Thus, Theorem~\ref{comparison_theorem} remains true with ``perfect'' replaced by ``virtually perfect''
(or even by: ``group whose abelianization is torsion'').

Given a group $G$, one may attempt to obtain a lower bound on $\sl(*|\gamma_n)$ on elements
of $G_n$ by embedding $G$ in a perfect group $H$, and using Theorem~\ref{duality_theorem}
applied to $H$ together with monotonicity of $\sl(*|*)$ under homomorphisms. It is
sometimes easier to take $H$ to be virtually perfect rather than perfect; we shall see
an example in \S~\ref{grope_section}.

\section{Gropes}\label{grope_section}

The purpose of this section is to show how hyperbolic geometry can be used to give straightforward
lower bounds on $\l(*|\gamma_3)$. We give some examples that show that the uniform comparisons 
in Theorem~\ref{comparison_theorem} and Proposition~\ref{algebraic_gamma_3} for {\em stable} 
lengths do not have any analogue for {\em unstable} length.

The arguments depend on the geometry of certain objects called {\em gropes}. We do not discuss here the
most general kind of gropes, but only the simplest nontrivial examples. For a general introduction
to gropes, see \S~13 of \cite{Cannon}. Informally speaking, gropes topologize the commutator calculus,
and questions about expressing elements in groups as products of $\gamma_n$ (or $\beta_n$) words
can be translated into questions about the existences of maps of certain kinds of gropes to spaces.
We also use some elementary facts from the theories of $\CAT(-1)$ complexes and pleated surfaces.
A basic reference for the first is \cite{Bridson_Haefliger}, especially pp.~347--362. A basic
reference for the second in \cite{Thurston}, Chapter~8.

\medskip

Let $S_n$ be an oriented surface of genus $n$ with one boundary component. Let $S_{n,1}$ be obtained by
attaching a once-punctured torus to each of a maximal collection of
pairwise disjoint homologically essential loops $\beta_i$ in $S_n$ (there are $n$ such). We denote
by $\partial S_{n,1}$ the boundary of $S_n$ (contained in $S_{n,1}$).

Now, let $G$ be a group, and let $X$ be a space with $\pi_1(X)=G$. If $g\in G$ is given,
let $\gamma:S^1 \to X$ be a loop whose free homotopy class corresponds to the conjugacy class of $g$.
Observe that from the definitions, $\l(g|\gamma_3) \le n$ if and only if there is a map
$f:S_{n,1} \to X$ such that $\partial f: \partial S_{n,1} \to X$ factors through a homeomorphism
$h: \partial S_{n,1} \to S^1$ in such a way that $\gamma\circ h = \partial f$. Informally, the
$\gamma_3$-length of $g$ is at most $n$ if and only if there is a map from $S_{n,1}$ to $X$ whose
boundary wraps around $\gamma$.

\begin{proposition}
For each $n$, let $F$ be free on the generators $x_1,y_1,\cdots,x_n,y_n,z$ and let
$w_n = [z,[x_1,y_1][x_2,y_2]\cdots[x_n,y_n]]$. Then $\l(w_n|\gamma_3)\ge 2n/3$.
\end{proposition}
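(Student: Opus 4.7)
The plan is to realize $w_n$ as the boundary of an aspherical $\CAT(-1)$ $2$-complex $\Sigma$ with $\pi_1(\Sigma)=F$, and then apply Gauss-Bonnet to a pleated representative of a map $f\colon S_{m,1}\to\Sigma$ together with a degree computation. I would build $\Sigma$ by gluing a once-punctured torus $T$ (with $\pi_1(T)$ free on $z,u'$ and $\partial T$ representing $[z,u']$) to a genus-$n$ surface $\Sigma_n$ (with $\pi_1(\Sigma_n)$ free on $x_i,y_i$ and $\partial\Sigma_n$ representing $\prod_i[x_i,y_i]$), identifying $\partial\Sigma_n$ with the embedded loop $u'\subset T$. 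Van Kampen gives $\pi_1(\Sigma)=\langle z,u',x_i,y_i\mid u'=\prod[x_i,y_i]\rangle\cong F$, and since $\Sigma$ is $2$-dimensional with $H_2(\Sigma)=0$ (a Mayer-Vietoris check), it is aspherical. Its Euler characteristic is $\chi(T)+\chi(\Sigma_n)=-1+(1-2n)=-2n$, and $\partial\Sigma=\partial T$ represents $w_n$. Putting hyperbolic metrics on $T$ and $\Sigma_n$ with matching length along $u'=\partial\Sigma_n$ endows $\Sigma$ with a piecewise hyperbolic structure which is $\CAT(-1)$ (at any point of the singular curve the link is a theta graph with each arc of length $\pi$), of total area $-2\pi\chi(\Sigma)=4\pi n$.

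Assuming $\l(w_n|\gamma_3)\le m$, the characterization of $\gamma_3$-length recalled just before the proposition (applied with $X=\Sigma$) produces a map $f\colon S_{m,1}\to\Sigma$ whose restriction to $\partial S_{m,1}$ is $\partial\Sigma$ precomposed with a homeomorphism of circles. Since both $\partial\Sigma$ and $\partial S_{m,1}$ are commutators, hence trivial in $H_1$ of their respective ambient complexes, the long exact sequence gives $H_2(\Sigma,\partial\Sigma)\cong H_2(S_{m,1},\partial S_{m,1})\cong\mathbb Z$, and the degree-$1$ boundary forces $f$ to have relative degree $\pm 1$. I would then pleat $f$ piece by piece, homotoping $f|_{S_m}$ and each $f|_{T_i}$ (the $m$ once-punctured tori of the grope) to be totally geodesic on an ideal triangulation in the $\CAT(-1)$ target. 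Summing the Gauss-Bonnet bound over pieces, and noting $\chi(S_{m,1})=(1-2m)+m(-1)=1-3m$, the total pleated area is at most $-2\pi\chi(S_{m,1})=2\pi(3m-1)$.

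The final step compares pleated area to the pullback of the $\Sigma$-area form. A pleated map is $1$-Lipschitz on each $2$-piece, so $|f^*dA_\Sigma|\le dA_{\textnormal{pleated}}$ pointwise and $\bigl|\int_{S_{m,1}}f^*dA_\Sigma\bigr|\le 2\pi(3m-1)$. Relative degree $\pm 1$ gives $\int_{S_{m,1}}f^*dA_\Sigma=\pm 4\pi n$, whence $4\pi n\le 2\pi(3m-1)$ and $m\ge(2n+1)/3\ge 2n/3$. The main obstacle I anticipate is handling the grope / non-manifold aspect: neither $\Sigma$ nor $S_{m,1}$ is a manifold, so I must verify carefully that the $\CAT(-1)$ and pleated-surface machinery applies along the singular strata. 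For $\Sigma$ this reduces to the $\CAT(1)$ link check at the gluing circle $u$; for $S_{m,1}$, one pleats each $2$-piece independently and arranges that the maps on adjacent pieces agree along the branch curves $\beta_i$, which can be done by first isotoping $f$ so that each $\beta_i$ is mapped to a geodesic loop in $\Sigma$ before pleating.
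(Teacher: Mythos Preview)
Your construction of the target complex $\Sigma$, the $\CAT(-1)$ check, and the pleating setup match the paper's approach exactly. The genuine gap is in your final step, where you invoke a degree/integration argument: the equality $\int_{S_{m,1}}f^*dA_\Sigma=\pm 4\pi n$ does not follow from relative degree $\pm 1$, because $\Sigma$ is not a manifold (or even a pseudomanifold --- three sheets meet along $u'$). There is no closed $2$-form on $\Sigma$ whose pairing with the generator of $H_2(\Sigma,\partial\Sigma)$ equals the total area $4\pi n$. Indeed, the generator of $H_2(\Sigma,\partial\Sigma)\cong\Z$ is represented by the once-punctured torus $T$ alone: $\partial[T]=[\partial\Sigma]$ generates $H_1(\partial\Sigma)$, while $[\Sigma_n]$ is not even a relative cycle since $\partial[\Sigma_n]=[u']\notin C_1(\partial\Sigma)$. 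The area carried by this generator is $\area(T)=2\pi$, not $4\pi n$. Concretely, the inclusion $(T,\partial T)\hookrightarrow(\Sigma,\partial\Sigma)$ already has relative degree $1$, yet its image has area $2\pi$; so relative degree $\pm 1$ cannot force the signed area covered to equal $\pm\area(\Sigma)$.

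The paper replaces this step by a surjectivity argument. Once $f$ is pleated it is $1$-Lipschitz, hence area non-increasing, and one shows that $f$ is \emph{surjective} onto $\Sigma$: if some point $p\in\Sigma$ were missed, $f$ would factor through $\Sigma\setminus\{p\}$, forcing the conjugacy class of $w_n$ to lie in the third term of the lower central series of $\pi_1(\Sigma\setminus\{p\})$, which one checks is false. Surjectivity plus area non-increasing gives the \emph{unsigned} inequality $\area(S_{m,1})\ge\area(\Sigma)$, i.e.\ $2\pi(3m-1)\ge 4\pi n$, with no appeal to degree or to any global orientation on the singular target. This is precisely the missing idea in your proposal.
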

\begin{proof}
We build a $K(F,1)$ (called $X$) as follows. Start with a hyperbolic once-punctured
torus $S$ with totally geodesic boundary. Let $\alpha$ be an embedded geodesic in $S$ representing
the meridian. Take another hyperbolic surface $S'$ of genus $n$ with totally geodesic boundary
of length $\length(\alpha)$, and attach $\partial S'$ isometrically to $\alpha$. The resulting space
$X$ is a $K(F,1)$, and the conjugacy class of $w_n$ is represented by the boundary circle
$\partial S$, which by abuse of notation, we denote $\partial X$.

Since $X$ is obtained by gluing convex hyperbolic $2$-complexes along convex subsets, it is itself a $\CAT(-1)$ 
$2$-complex. If $\l(w_n|\gamma_3)=m$,
there is a map $f:S_{m,1} \to X$ sending $\partial S_{m,1}$ to $\partial X$. We homotop this map to
a {\em pleated representative} in a special way. First we choose an ideal triangulation of $S_m$ 
for which the geodesic representatives of the 
$\beta_i$ (with notation as above) are contained in the pleating locus $\lambda$. The map $f$ can be homotoped on $S_m$ to take
each leaf of $\lambda$ to a geodesic in $X$, and to be $1$-Lipschitz on each ideal triangle of $S_m-\lambda$ for
some hyperbolic metric on $S_m$. Then $f$ can be homotoped rel. $\cup_i \beta_i$
to a pleated representative with respect to some hyperbolic metric on 
each once-punctured torus component of $S_{m,1} - S_m$.

The key property of a pleated map is that it is area non-increasing. Moreover, it is surjective onto $X$, since
for any point $p \in X$, the conjugacy class of $w_n$ is not in $[\pi_1(X-p),[\pi_1(X-p),\pi_1(X-p)]]$.
By Gauss--Bonnet, $\area(S_{m,1}) = 2\pi\cdot(3m-1)$ and $\area(X) = 2\pi\cdot 2n$.
Hence $3m-1\ge 2n$ and therefore $m\ge 2n/3$, as claimed.
\end{proof}

On the other hand, $\sl(w_n|\gamma_3)\le 1$ for all $n$, by Proposition~\ref{algebraic_gamma_3}.
This example shows that the comparison theorem (Theorem~\ref{comparison_theorem}) has no analogue for unstable 
$\gamma_n$-lengths, even if $n=3$.

\end{document}